\documentclass[a4paper]{amsart}
\oddsidemargin 0mm
\evensidemargin 0mm
\topmargin 10mm
\textwidth 160mm
\textheight 230mm
\tolerance=9999
\usepackage[latin1]{inputenc}
\usepackage{amssymb}
\usepackage{amsmath}
\usepackage{mathrsfs}
\usepackage{eufrak}
\usepackage{amsthm}
\usepackage{amsfonts}
\usepackage{textcomp}
\usepackage{graphicx}
\usepackage[pdftex]{color}
\usepackage{paralist}
\usepackage[shortlabels]{enumitem}
\usepackage[hyperpageref]{backref}
\usepackage[pagebackref]{hyperref}
\renewcommand*{\backref}[1]{}  
   \renewcommand*{\backrefalt}[4]{
      \ifcase #1 
         Not cited.
      \or
         Cited on page #2.
      \else
         Cited on pages #2.
      \fi}
\usepackage{comment}
\usepackage[arrow, matrix, curve]{xy}
\usepackage{tikz}
\usepackage{tikz-cd}
\usetikzlibrary{patterns,shapes,math,decorations.pathmorphing,decorations.pathreplacing,calc,arrows}

\newtheorem*{corollary*}{Corollary}
\newtheorem{theorem}{Theorem}[section]

\newtheorem{corollary}[theorem]{Corollary}
\newtheorem{lemma}[theorem]{Lemma}
\newtheorem{definition-proposition}[theorem]{Definition-Proposition}
\newtheorem{proposition}[theorem]{Proposition}
\newtheorem{question}[theorem]{Question}

\newtheorem*{claim*}{Claim}

\newtheorem{problem}[theorem]{Problem}

\theoremstyle{definition}
\newtheorem{definition}[theorem]{Definition}

\newtheorem*{theorem }{Theorem}
\newtheorem{remark}[theorem]{Remark}
\newtheorem{example}[theorem]{Example}

\theoremstyle{remark}

\numberwithin{equation}{theorem}


\renewcommand{\mod}{\operatorname{mod}}
\newcommand{\proj}{\operatorname{proj}}

\newcommand{\id}{\operatorname{id}}
\newcommand{\DN}{\operatorname{DN}}
\newcommand{\gr}{\operatorname{grade}}
\newcommand{\sgr}{\operatorname{s.grade}}

\newcommand{\cogr}{\operatorname{cograde}}
\newcommand{\Dom}{\operatorname{Dom}}

\newcommand{\LL}{\operatorname{LL}}

\newcommand{\Ext}{\operatorname{Ext}}
\newcommand{\gldim}{\operatorname{gldim}}

\newcommand{\Gr}{\operatorname{Gr}}
\newcommand{\SGr}{\operatorname{SGr}}
\newcommand{\End}{\operatorname{End}}
\newcommand{\Hom}{\operatorname{Hom}}
\newcommand{\add}{\operatorname{\mathrm{add}}}
\renewcommand{\top}{\operatorname{\mathrm{top}}}
\newcommand{\rad}{\operatorname{\mathrm{rad}}}
\newcommand{\Cokernel}{\operatorname{Cok}\nolimits}
\newcommand{\Kernel}{\operatorname{Ker}\nolimits}
\newcommand{\ind}{\operatorname{ind}\nolimits}

\newcommand{\soc}{\operatorname{\mathrm{soc}}}

\renewcommand{\mod}{\operatorname{mod}}

\newcommand{\Z}{\mathbb{Z}}
\newcommand{\R}{\mathbb{R}}
\newcommand{\D}{\mathcal{D}}
\newcommand{\RHom}{\operatorname{\mathsf{R}Hom}}

\newcommand{\Db}{\mathrm{D}^{\mathrm{b}}}

\newcommand{\CC}{\mathcal{C}}

\newcommand{\ZZ}{\mathcal{Z}}

\newcommand{\domdim}{\operatorname{domdim}}
\newcommand{\inj}{\operatorname{inj}}
\newcommand{\codomdim}{\operatorname{codomdim}}

\newcommand{\idim}{\operatorname{idim}}
\newcommand{\pdim}{\operatorname{pdim}}
\newcommand{\CM}{\operatorname{CM}}
\newcommand{\Tor}{\operatorname{Tor}}

\newcommand{\op}{\mathrm{op}}
\begin{document}

\title[Dominant Auslander-Gorenstein algebras and mixed cluster tilting]{Dominant Auslander-Gorenstein algebras and\\ mixed cluster tilting}\date{\today}

\dedicatory{Dedicated to the memory of Hiroyuki Tachikawa}

\author[Chan]{Aaron Chan}
\address[Chan]{Graduate School of Mathematics, Nagoya University, Furocho, Chikusaku, Nagoya 464-8602, Japan}
\email{aaron.kychan@gmail.com}

\author[Iyama]{Osamu Iyama}
\address[Iyama]{Graduate School of Mathematical Sciences, University of Tokyo, 3-8-1 Komaba Meguro-ku Tokyo 153-8914, Japan}
\email{iyama@ms.u-tokyo.ac.jp}

\author[Marczinzik]{Ren\'{e} Marczinzik}
\address[Marczinzik]{Mathematical Institute of the University of Bonn, Endenicher Allee 60, 53115 Bonn, Germany}
\email{marczire@math.uni-bonn.de}

\subjclass[2010]{Primary 16G10, 16E10}
\thanks{The first author is supported by JSPS Grant-in-Aid for Research Activity Start-up program 19K23401.  The second author is supported by JSPS Grant-in-Aid for Scientific Research (B) 22H01113 and (C) 18K3209.  The last named author was supported by the DFG with the project number 428999796.}

\keywords{Auslander-Gorenstein algebra, Auslander-regular algebra, mixed precluster tilting, cluster tilting, $d$-representation finite algebra}
\begin{abstract}
We introduce the class of dominant Auslander-Gorenstein algebras as a generalisation of higher Auslander algebras and minimal Auslander-Gorenstein algebras, and give their basic properties.
We also introduce mixed (pre)cluster tilting modules as a generalisation of (pre)cluster tilting modules, and establish an Auslander type correspondence by showing that dominant Auslander-Gorenstein (respectively, Auslander-regular) algebras correspond bijectively with mixed precluster (respectively, cluster) tilting modules.
We show that every trivial extension algebra $T(A)$ of a $d$-representation-finite algebra $A$ admits a mixed cluster tilting module and show that this can be seen as a generalisation of the well known result that $d$-representation-finite algebras are fractionally Calabi-Yau. 
We show that iterated SGC-extensions of a gendo-symmetric dominant Auslander-Gorenstein algebra admit mixed precluster tilting modules.
\end{abstract}

\maketitle
\tableofcontents
\section*{Introduction}
The notion of Gorenstein rings is central in algebra. They are commutative Noetherian rings $R$ whose localisation at prime ideals have finite injective dimension. One of the characterisations of Gorenstein rings due to Bass is that, for a minimal injective coresolution
\[0\to R\to I^0\to I^1\to\cdots\] 
of the $R$-module $R$ and each $i\ge0$, the term $I^i$ is a direct sum of the injective hull of $R/p$ for all prime ideals $p$ of height $i$. In this case, $I^i$ has flat dimension $i$, and this property plays an important role in the study of non-commutative analogues of Gorenstein rings. A noetherian ring $A$, which is not necessarily commutative, satisfies the Auslander condition if there exists an injective coresolution
\begin{equation}\label{I^i}
0\to A\to I^0\to I^1\to\cdots
\end{equation}
of the right module $A$ such that the flat dimension of $I^i$ is bounded by $i$ for all $i\ge0$. This condition is left-right symmetric, and there are many other equivalent conditions. If, additionally, $A$ has finite injective dimension, $A$ is said to be \emph{Auslander-Gorenstein}, and if $A$ has finite global dimension, $A$ is said to be \emph{Auslander-regular}.
These algebras play important roles in various areas, including homological algebra, non-commutative algebraic geometry, analytic $D$-modules, Lie theory and combinatorics, see for example \cite{AR,B,C,IM,KMM,VO}. 

\medskip
The classical Auslander correspondence \cite{A,ARS} is a milestone in representation theory which gives a bijective correspondence between representation-finite artin algebras $B$ and \emph{Auslander algebras} $A$, which are by definition the artin algebras $A$ with $\gldim A \leq 2 \leq \domdim A$, where $\domdim X$ of an $A$-module $X$ with minimal injective coresolution
\[0\to X\to I^0\to I^1\to\cdots\]
is the infimum of $i\ge0$ such that $I^i$ is non-projective. This was one of the starting points of Auslander-Reiten theory developed in the 1970s. It encodes the representation theory of $B$ into the homological algebra of its Auslander algebra $A$.
More generally, algebras $A$ satisfying $\gldim A\leq d\leq \domdim A$ are called \emph{$(d-1)$-Auslander algebras}
for each $d\ge0$, and there exists a bijective correspondence (called the higher Auslander correspondence) between \emph{$d$-cluster tilting modules} $M$ over finite dimensional algebras $B$ (see \eqref{define CT}) and $d$-Auslander algebras for each $d\geq 1$ given by $M\mapsto A=\End_B(M)$ \cite{I6}.
Finite dimensional algebras $B$ with $\gldim A \leq d$ admitting $d$-cluster tilting modules are called \emph{$d$-representation finite}. These notions are central in higher Auslander-Reiten theory of finite dimensional algebras and Cohen-Macaulay representations \cite{I5}. 

A further generalisation of the Auslander correspondence to algebras with infinite global dimension was established recently in \cite{IyaSol}, building upon \cite{AS}. 
In this setting, Auslander algebras are replaced by \emph{minimal Auslander-Gorenstein algebras}, which are algebras $A$ with $\idim A \leq d \leq \domdim A$ for some $d \geq 0$.  
Similar to the higher Auslander correspondence, we have $A\simeq \End_B(M)$ for some finite dimensional algebra $B$ and a \emph{precluster tilting $B$-module} $M$.
 Several applications and interactions with other fields are found; these include cluster algebras of Fomin-Zelevinsky \cite{FZ} and non-commutative crepant desingularisations in algebraic geometry \cite{V}, see also \cite{AT,DI,DJL,DJW,GI,IO1,IO2,JKM,H,HS,HI1,HI2,HZ1,HZ2,HZ3,JK,M,P,S,ST,Va,W,Wu} and
\cite{CIM,CheKoe,DITW,HKV,Grev,LMZ,LZ,MS,MMZ,Mc,NRTZ,PS,R,Z}. 

The first aim of this paper is to introduce \emph{dominant Auslander-Gorenstein algebras}, a generalisation of higher Auslander algebras as well as minimal Auslander-Gorenstein algebras.  A dominant Auslander-Gorenstein algebra $A$ is an Iwanaga-Gorenstein algebra $A$ such that $\idim P \leq \domdim P$ holds for every indecomposable projective $A$-module $P$.
A \emph{dominant Auslander-regular algebra} is then defined to be a dominant Auslander-Gorenstein algebra with finite global dimension.  We refer to Theorem \ref{conditionGtheorem} and Section \ref{sec:MPCT} for other equivalent characterisations of dominant Auslander-regular algebras.
\[\xymatrix@R1.5em{
\mbox{higher Auslander}\ar@{=>}[r]\ar@{=>}[d]&\mbox{dominant Auslander-regular}\ar@{=>}[r]\ar@{=>}[d]&\mbox{Auslander-regular}\ar@{=>}[d]\\
\mbox{minimal Auslander-Gorenstein}\ar@{=>}[r]&\mbox{dominant Auslander-Gorenstein}\ar@{=>}[r]&\mbox{Auslander-Gorenstein}}
\]
The class of dominant Auslander-Gorenstein algebras is much larger than that of minimal Auslander-Gorenstein algebras, and still enjoys extremely nice homological properties among all Auslander-Gorenstein algebras.
One of the advantages of dominant Auslander-regular algebras is that they are closed under gluing of algebras (Section \ref{section: glueing}) and Koszul duality \cite{CIM2} while higher Auslander algebras are not so in general.

One of our main results gives the Auslander correspondence for dominant Auslander-Gorenstein algebras. For this, we generalise the notion of cluster tilting modules as follows: For an algebra $B$, we consider the higher Auslander-Reiten translates
\[\tau_n:=\tau \Omega^{n-1}\ \mbox{ and }\ \tau_n^{-}:=\tau^{-1} \Omega^{-(n-1)}.\]
Then a generator-cogenerator $M$ of $B$ is called \emph{mixed precluster tilting} if the following condition is satisfied.
\begin{enumerate}
\item[$\bullet$] For each indecomposable non-projective direct summand $X$ of $M$, there exists $\ell_X\ge1$ such that $\Ext^i_B(X,M)=0$ for all $1\le i<\ell_X$ and $\tau_{\ell_X}(X)\in\add M$.
\end{enumerate}
This is equivalent to its dual condition, see Definition-Proposition \ref{mixed 0}.
We refer to Sections \ref{section: TrivExtdrep}, \ref{section:example mixed pct} and \ref{section: glueing} for examples of mixed precluster tilting modules.
We prove the following Auslander correspondence for dominant Auslander-Gorenstein algebras.

\begin{theorem}[Theorem \ref{main correspondence 0}]\label{main correspondence 0 in intro}
There exists a bijection between the following objects.
\begin{enumerate}[\rm(1)]
\item The Morita equivalence classes of dominant Auslander-Gorenstein algebras $A$ with $\domdim A\ge 2$.
\item The Morita equivalence classes of pairs $(B,M)$ of finite dimensional algebras $B$ and mixed precluster tilting modules $M$.
\end{enumerate}
The correspondence from {\rm(2)} to {\rm(1)} is given by $(B,M)\mapsto A:=\End_B(M)$.
\end{theorem}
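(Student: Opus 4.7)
The plan is to proceed via the classical Morita-Tachikawa correspondence, which gives a bijection between Morita equivalence classes of finite dimensional algebras $A$ with $\domdim A\ge 2$ and pairs $(B,M)$ where $M$ is a basic generator-cogenerator of $\mod B$: in one direction $A=\End_B(M)$, and in the other $B=(eAe)^{\op}$ and $M=Ae$ for the idempotent $e$ picking out the basic projective-injective summand of $A$. Under this bijection, indecomposable summands $X$ of $M$ correspond to indecomposable projective $A$-modules $P_X:=\Hom_B(M,X)$, and the projective-injective $A$-modules correspond to the injective summands of $M$. The task then reduces to showing, summand-by-summand, that the dominant Auslander-Gorenstein conditions on $A$ match the mixed precluster tilting conditions on $M$.

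The translation rests on two pillars. The first is a refined Müller/Iyama lemma giving the dominant dimension of each indecomposable projective of $A$ in terms of $\Ext$-vanishing over $B$:
\[
\domdim_A(P_X)\ge n+2\iff \Ext^i_B(M,X)=0\text{ for }1\le i\le n,
\]
which directly matches the $\Ext$-vanishing clause in the dual form of the definition of mixed precluster tilting. The second is Auslander-Reiten duality together with the defining formula $\tau_n^-=\tau^{-1}\Omega^{-(n-1)}$: a non-vanishing $\Ext^{d_X}_B(M,X)$ can be expressed, via $\ul{\Hom}_B(\tau^-_{d_X}(X),M)$, in terms of $\tau^-_{d_X}(X)$, and the hypothesis $\tau^-_{d_X}(X)\in\add M$ then lets one extend the partial injective coresolution of $P_X$ by one additional proj-injective term and force its termination, yielding $\idim_A(P_X)\le d_X+1$. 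Conversely, the bound $\idim_A(P_X)\le\domdim_A(P_X)$, together with the finiteness of $\idim_A(P_X)$, is used to identify $\tau^-_{d_X}(X)$ with an indecomposable summand of the term of the minimal injective coresolution of $P_X$ that appears just after the proj-injective part, which must then lie in $\add M$.

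With these two equivalences in place, the two directions of the theorem become largely bookkeeping. For $(2)\Rightarrow(1)$: given mixed precluster tilting $M$, the generator-cogenerator property gives $\domdim A\ge 2$, the $\Ext$-vanishing provides the lower bound $\domdim_A(P_X)\ge d_X+1$, the $\tau^-$-condition provides the matching upper bound on $\idim_A(P_X)$, and Iwanaga-Gorensteinness on both sides follows from the symmetry between the equivalent conditions (A) and (B), which I would verify first using the interplay between $\tau_n$ and $\tau_n^-$. For $(1)\Rightarrow(2)$: from $A$ dominant Auslander-Gorenstein, set $d_X:=\domdim_A(P_X)-1$ for each non-injective summand $X$, read off the $\Ext$-vanishing from the first formula above, and recover $\tau^-_{d_X}(X)\in\add M$ via the second pillar. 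The main technical obstacle is precisely this second pillar: establishing the identification between the module-theoretic datum $\tau^-_{d_X}(X)$ on the $B$-side and the $(d_X+1)$-st syzygy of the minimal injective coresolution of $P_X$ on the $A$-side. This requires combining higher Auslander-Reiten theory on $B$ with the Nakayama functor duality on the Iwanaga-Gorenstein algebra $A$, and is where the finiteness of injective dimensions plays its essential role.
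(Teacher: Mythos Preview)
Your proposal is correct and follows essentially the same route as the paper, which splits the argument into two propositions (one for each direction) built on the Morita--Tachikawa correspondence and the M\"uller/APT formula for dominant dimension. The one place where the paper's execution differs slightly from your sketch is your ``second pillar'': rather than invoking AR-duality to obtain $D\ul{\Hom}_B(\tau_{d_X}^-X,M)$, the paper computes the cokernel of $\Hom_B(M,I^{d_X-1})\to\Hom_B(M,I^{d_X})$ directly as $D\Hom_B(\tau_{d_X}^-(X),M)$ (full $\Hom$, not stable) via a short Nakayama-functor diagram chase on $B$; this immediately exhibits the last term as injective when $\tau_{d_X}^-(X)\in\add M$ and avoids having to separately recover the maps-through-projectives part that the stable-$\Hom$ formulation drops.
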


To a mixed precluster tilting module $M$, we associate the subcategory
\[\mathcal{Z}(M):=\bigcap\limits_{X\in\ind(\add M)}X^{\perp{\ell_X-1}}\]
of $\mod B$ (Definition \ref{ZMdefinition}), and we prove that there exists an equivalence
\[\Hom_B(M,-):\mathcal{Z}(M)\simeq\CM A,\] 
where $\CM A$ denotes the subcategory of maximal Cohen-Macaulay $A$-modules (Theorem \ref{thm-CM}). It follows that $A$ is dominant Auslander-regular if and only if $\mathcal{Z}(M)=\add M$.
Motivated by this result, we call a mixed precluster tilting module $M$ \emph{mixed cluster tilting} if it satisfies $\mathcal{Z}(M)=\add M$.
Thus we have implications
\[\xymatrix@R1.5em{
\mbox{$d$-cluster tilting}\ar@{=>}[r]\ar@{=>}[d]&\mbox{mixed cluster tilting}\ar@{=>}[d]\\
\mbox{$d$-precluster tilting}\ar@{=>}[r]&\mbox{mixed precluster tilting.}}
\]
We refer to Sections \ref{section: TrivExtdrep} and \ref{section: glueing} for examples of mixed cluster tilting modules.
Now we are ready to state the following restriction of the correspondence in Theorem \ref{main correspondence 0 in intro}.

\begin{theorem}[Theorem \ref{main correspondence 0 2}]\label{main correspondence 0 2 in intro}
The bijection in Theorem \ref{main correspondence 0 in intro} restricts to a bijection between the following objects.
\begin{enumerate}[\rm(1)]
\item The Morita equivalence classes of dominant Auslander-regular algebras $A$ with $\domdim A\ge 2$.
\item The Morita equivalence classes of pairs $(B,M)$ of finite dimensional algebras $B$ and mixed cluster tilting modules $M$.
\end{enumerate}
\end{theorem}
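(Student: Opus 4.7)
The plan is to deduce the refined bijection directly from Theorem~\ref{main correspondence 0 in intro} together with the homological dictionary provided by Theorem~\ref{thm-CM}. Since the bijection of Theorem~\ref{main correspondence 0 in intro} is already in hand, the only point to verify is that, along the correspondence $(B,M)\mapsto A=\End_B(M)$, the algebra $A$ has finite global dimension if and only if $\mathcal{Z}(M)=\add M$, which is by definition the condition that $M$ is mixed cluster tilting.

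The first ingredient I would use is the standard fact that an Iwanaga-Gorenstein algebra $A$ has finite global dimension if and only if $\CM A=\add A$. One direction is immediate, since Gorenstein projective modules of finite projective dimension are automatically projective. For the converse, over an Iwanaga-Gorenstein algebra every module has finite Gorenstein projective dimension, so the vanishing $\CM A=\proj A$ forces every module to have finite projective dimension. Because a dominant Auslander-regular algebra is by definition a dominant Auslander-Gorenstein algebra of finite global dimension, this translates to: a dominant Auslander-Gorenstein algebra $A$ is dominant Auslander-regular if and only if $\CM A=\add A$.

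The second ingredient is the equivalence $\Hom_B(M,-)\colon\mathcal{Z}(M)\xrightarrow{\sim}\CM A$ from Theorem~\ref{thm-CM}, whose quasi-inverse is $-\otimes_A M$. This equivalence sends $M$ to $\Hom_B(M,M)=A$, and therefore identifies $\add M\subseteq\mathcal{Z}(M)$ with $\add A\subseteq\CM A$. Consequently $\CM A=\add A$ holds if and only if $\mathcal{Z}(M)=\add M$, i.e.\ precisely when $M$ is mixed cluster tilting. Combining these two equivalences with the bijection of Theorem~\ref{main correspondence 0 in intro} yields the desired restricted correspondence. I do not expect any further obstacle here: all the substantive homological content is already packaged into Theorem~\ref{thm-CM} and the standard Iwanaga-Gorenstein characterization of finite global dimension, so the remaining argument amounts to unwinding definitions.
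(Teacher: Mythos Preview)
Your proposal is correct and follows essentially the same route as the paper: the paper packages the two ingredients you identify into Corollary~\ref{thm-CM2} (namely, $\gldim A<\infty \Leftrightarrow \CM A=\proj A \Leftrightarrow \mathcal{Z}(M)=\add M$), and then states Theorem~\ref{main correspondence 0 2} as an immediate consequence of Theorem~\ref{main correspondence 0} and that corollary. Your argument reproduces exactly this reasoning.
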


Recall that the \emph{complexity} of a module $M\in\mod A$ is defined as
\begin{align}
\inf\{ i\in\Z_{\ge0}\mid\exists c\in\R_{>0}\,\forall n\in\Z_{\ge0}\, : \dim_KP_n\le cn^{i-1}\},\label{eq:cx(M)}
\end{align}
where $\cdots\to P_1\to P_0\to M\to0$ is a minimal projective resolution of $M$.
The complexity of an algebra $A$ is the supremum of the complexity of its modules in $\mod A$. We prove the following generalisation of Erdmann and Holm's  result \cite{EH} on the complexity of self-injective algebras admitting $d$-cluster tilting modules.


\begin{theorem}[Theorem \ref{ErdmannHolmgeneralisation}]\label{complexity}
If a finite dimensional self-injective algebra $A$ admits a mixed cluster tilting module, then $A$ has complexity at most one.
\end{theorem}

The final two sections contain applications and examples of mixed (pre)cluster tilting modules.
Recall that the trivial extension algebra $T(A)=A \oplus DA$ of a finite dimensional algebra $A$ is a Frobenius algebra that is of central importance in representation theory.
Trivial extension algebras as well as its covering called repetitive algebras give a systematic construction of self-injective algebras using orbit categories, and it is known that any representation-finite self-injective algebras are obtained in this way \cite{BLR,Bon,Rie}. It also plays an important role in 
a recent characterisation of twisted fractionally Calabi-Yau algebras of finite global dimension using trivial extension algebras in \cite{CDIM}.
We show that, for each Dynkin quiver $Q$, the trivial extension algebra $B=T(KQ)$ has a mixed cluster tilting module given by a direct sum of $B$ and all indecomposable non-projective $KQ$-modules. In fact, our result is much more general and can be stated as follows.

\begin{theorem}[Theorem \ref{B+N}]\label{B+N in intro}
Let $A$ be a $d$-representation-finite algebra with a basic $d$-cluster tilting module $M=A\oplus N$ and let $B=T(A)$ be the trivial extension of $A$.
Then $B \oplus N$ is a mixed cluster tilting $B$-module.
\end{theorem}

This result is interesting from the point of view of  Darp\"o and Kringeland's  classification \cite{DK} of cluster tilting modules over the trivial extension of Dynkin path algebras.  It turns out that, for $d\geq 2$, $d$-cluster tilting modules over this class of algebras only exist in Dynkin types $A_n$ and $D_4$; on the other hand, Theorem \ref{B+N in intro} tells us that non-trivial mixed cluster tilting modules exist for any Dynkin types.

In Section \ref{section:example mixed pct}, we discuss an inductive way to obtain infinitely many mixed cluster tilting modules starting from algebras with certain properties, we refer to Theorem \ref{thm: iterative construction} for full details. We just state here the main result for the most important special case, namely gendo-symmetric algebras.
Recall that a finite dimensional algebra $A$ is called \emph{gendo-symmetric} if $A \simeq \End_B(M)$ for a symmetric algebra $B$ with generator $M$ of $\mod B$. Gendo-symmetric algebras were introduced by Fang and Koenig in \cite{FanKoe} and containing important classes of algebras such as Schur algebras $S(n,r)$ for $n \geq r$, blocks of category $\mathcal{O}$ and centraliser algebras of nilpotent matrices, see for example \cite{KSX} and 
\cite{CrM}.
On the other hand, the \emph{SGC-extension} (smallest generator-cogenerator extension) of an algebra $A$ is $\End_A(A\oplus DA)$, see e.g. \cite{CIM}.
The main result of Section \ref{section:example mixed pct} can be stated as follows for gendo-symmetric algebras and their iterated SGC-extensions.

\begin{theorem}[Corollary \ref{gendosymtheorem}]
Let $A=A_0$ be a gendo-symmetric algebra. For each $i\ge0$, let $M_i:=A_i \oplus DA_i$ and $A_{i+1}:=\End_{A_i}(M_i)$.
\begin{enumerate}[\rm(1)]
\item Assume that $A$ is a dominant Auslander-Gorenstein algebra. Then for each $i\ge0$, $A_i$ is a dominant Auslander-Gorenstein algebra with a mixed precluster tilting module $M_i$.
\item Assume that $A$ is a minimal Auslander-Gorenstein algebra. Then for each $i\ge0$, $A_i$ is a minimal Auslander-Gorenstein algebra with a precluster tilting module $M_i$.
\end{enumerate}
\end{theorem}


In forthcoming work \cite{CIM2} the notion of mixed cluster tilting modules and dominant Auslander-regular algebras will be of central importance to answer a question of Green in \cite{G} about the homological characterisation of the Koszul dual algebras of Auslander algebras. We will see that Koszul duality leads in a very natural way to the notion of dominant Auslander-regular algebras when working with the classical higher Auslander algebras and this will also allow us to discover large new classes of $d$-representation-finite algebras and cluster tilting modules.

\section{Preliminaries}
We assume that all algebras are finite dimensional over a field $K$ and modules are finitely generated right modules unless stated otherwise.
We assume that the reader is familiar with the basics of representation theory of algebras and refer for example to the textbooks \cite{ARS,ASS,SkoYam}.
For background on homological dimensions such as the dominant dimension and related properties we refer for example to \cite{Ta,Yam}. For background on Gorenstein homological algebra and maximal Cohen-Macaulay modules we refer to \cite{AB,Che}.

Throughout, $D=\Hom_K(-,K)$ denotes the natural duality of the module category of an algebra $A$, $\proj A$ denotes the full subcategory of projective $A$-modules and $\inj A$ the full subcategory of injective $A$-modules.
We will usually omit the bracket and write $DM$ when applying the duality $D$ to a module $M$, unless there is the danger of confusion -- for instance, we will write $D(Af)$ for injective $A$-module corresponding to an idempotent $f$.
A module $M$ is called \emph{basic} if it has no direct summand of the form $L \oplus L$ for some indecomposable module $L$. An algebra $A$ is called \emph{basic} if $A$ is a basic module. We denote by $\LL(M)$ the Loewy length of a module $M$.

\medskip

Denote by $\pdim M$ and $\idim M$ the projective dimension and injective dimension of a module $M$ respectively.
An algebra $A$ is said to be \emph{Iwanaga-Gorenstein} if $\idim A+\pdim DA < \infty$; in such a case, we have $\idim A=\pdim DA$ and $\idim A$ is called the \emph{self-injective dimension} of $A$.  A module $M$ with $\Ext_A^{i}(M,A)=0$ for all $i>0$ over an Iwanaga-Gorenstein algebra $A$ is called \emph{maximal Cohen-Macaulay} (also known as Gorenstein projective in the literature). 
Denote by $\CM A$ the full subcategory of maximal Cohen-Macaulay modules of $A$ and by $\Omega^n(\mod A)$ the full subcategory of modules that are isomorphic to the $n$-th syzygy of a module. 
One can show that an algebra $A$ is Iwanaga-Gorenstein with self-injective dimension $n$ if and only if $\Omega^n(\mod A)=\CM A$, see for example \cite[Theorem 2.3.3]{Che}.  Note that for an Iwanaga-Gorenstein algebra $A$, one has $\gldim A<\infty$ if and only if $\CM A=\proj A$; in which case, the global dimension coincides with the self-injective dimension.

The \emph{dominant dimension} $\domdim M$ of a module $M$ with a minimal injective coresolution 
$$0 \rightarrow M \rightarrow I^0 \rightarrow I^1 \rightarrow \cdots$$
is defined as the smallest $n$ such that $I^n$ is non-projective.
The \emph{codominant dimension} $\codomdim M$ of a module $M$ is defined as the dominant dimension of the left module $D(M)$.  $\Dom_n(A)$ denotes the full subcategory of $\mod A$ consisting of modules having dominant dimension at least $n$.
The dominant dimension of an algebra is defined as the dominant dimension of $A$ as an $A$-module, i.e. $\domdim A:=\domdim(A_A)$.  We note that $\domdim A=\codomdim DA=\domdim A^\op$ always holds.
We can characterise modules of large dominant dimension as follows.

\begin{lemma}\cite[Proposition 4]{MarVil} \label{marvillemma}
Let $A$ be an algebra of dominant dimension equal to $d \geq 1$. Then $\Dom_i(A)=\Omega^i(\mod A)$ for all $0\leq i\leq d$.
\end{lemma}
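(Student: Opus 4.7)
The plan is to prove the two inclusions $\Omega^i(\mod A)\subseteq \Dom_i(A)$ and $\Dom_i(A)\subseteq \Omega^i(\mod A)$ by induction on $i\in\{0,1,\ldots,d\}$, the base case $i=0$ being the trivial equality $\mod A=\mod A$.

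The inclusion $\Dom_i(A)\subseteq \Omega^i(\mod A)$ is essentially immediate from the definitions, and does not require $i\le d$. If $\domdim M\ge i$, then the first $i$ terms $I^0,\ldots,I^{i-1}$ of the minimal injective coresolution of $M$ lie in $\proj A\cap\inj A$, so letting $C$ denote the cokernel of $I^{i-2}\to I^{i-1}$ gives an exact sequence $0\to M\to I^0\to\cdots\to I^{i-1}\to C\to 0$ whose middle terms are projective, exhibiting $M$ as an $i$-th syzygy of $C$.

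For the reverse inclusion $\Omega^i(\mod A)\subseteq \Dom_i(A)$, I would pick a short exact sequence $0\to M\to P\to L\to 0$ with $P\in\proj A$ and $L\in\Omega^{i-1}(\mod A)$. Since $P$ is a direct summand of some $A^n$ and $\domdim A=d$, we have $\domdim P\ge d$, while the inductive hypothesis gives $\domdim L\ge i-1$. The desired bound $\domdim M\ge i$ then follows from the subadditivity estimate
\[\domdim X \ \ge\ \min\bigl(\domdim Y,\ \domdim Z+1\bigr) \qquad \text{for any short exact } 0\to X\to Y\to Z\to 0,\]
which yields $\domdim M\ge\min(d,i)=i$ because $i\le d$.

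The main technical input is the subadditivity estimate above. Using the well-known characterisation that $\domdim N\ge k$ iff $N$ admits an exact sequence $0\to N\to E^0\to\cdots\to E^{k-1}$ with $E^j\in\proj A\cap\inj A$, one embeds $X\hookrightarrow Y\hookrightarrow E_Y^0$ into a projective-injective $E_Y^0$ containing $Y$, sets $W:=E_Y^0/X$, and uses the short exact sequence $0\to Z\to W\to E_Y^0/Y\to 0$ provided by the Noether isomorphisms. Writing $n:=\min(\domdim Y,\domdim Z+1)$, both outer terms lie in $\Dom_{n-1}(A)$, and closure of $\Dom_{n-1}(A)$ under extensions (a horseshoe lemma argument on partial projective-injective coresolutions) forces $W\in\Dom_{n-1}(A)$; splicing $0\to X\to E_Y^0\to W\to 0$ with a partial projective-injective coresolution of $W$ of length $n-1$ then delivers $\domdim X\ge n$.
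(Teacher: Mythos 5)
Your argument is correct. Note first that the paper does not prove this lemma at all: it is quoted verbatim from Martinez-Villa's \emph{Modules of dominant and codominant dimension} (the cited Proposition 4), so there is no in-paper proof to compare against; what you have written is a self-contained substitute for that citation. The two key ingredients you isolate are exactly the standard ones underlying the cited result: (a) the characterisation that $\domdim N\ge k$ iff $N$ admits an exact sequence $0\to N\to E^0\to\cdots\to E^{k-1}$ with projective-injective terms (which follows since the minimal injective coresolution is a direct summand of any injective coresolution), and (b) the estimate $\domdim X\ge\min(\domdim Y,\domdim Z+1)$ for $0\to X\to Y\to Z\to 0$, which your co-horseshoe argument establishes correctly, the point being that $E_Y^0/Y$ is a cosyzygy of $Y$ and hence lies in $\Dom_{n-1}(A)$ alongside $Z$. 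The induction then closes because $\domdim P\ge d\ge i$ for projective $P$, which is precisely where the hypothesis $i\le d$ enters. Two cosmetic remarks: for $i=1$ the cokernel in your first inclusion should be read as $I^0/M$ (there is no $I^{-1}$); and your first inclusion exhibits $M$ as an $i$-th syzygy with respect to a possibly non-minimal projective resolution of $C$, which is consistent with the convention (implicit in the paper and in Martinez-Villa) that $\Omega^i(\mod A)$ contains projectives and is closed under the ambiguity by projective direct summands --- without that convention the stated equality could not hold anyway, since $A$ itself lies in $\Dom_i(A)$ for all $i\le d$.
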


Let $A$ be a finite dimensional algebra, and let $e,f\in A$ be idempotents such that $D(Af)$ and $eA$ are the basic additive generators of projective-injective modules, i.e. 
\begin{align}\label{eAf}
\add D(Af)=\proj A\cap \inj A =\add(eA).
\end{align}
Then we have an isomorphism $fAf \simeq eAe$ of algebras.  
Having $\domdim A\ge 1$ is equivalent to saying that there is an injective map $0\to A\to I$ with $I\in \add D(Af)$, which is also equivalent to (by left-right symmetry) the existence of surjective map $P\to DA\to 0$ with $P\in \add (eA)$.
In classical ring theoretic language, this amounts to say that  $eA$ is a minimal faithful projective-injective $A$-module and $Af$ is a minimal faithful projective-injective  $A^\op$-module.  
We call $fAf\simeq eAe$ the \emph{base algebra} of $A$

The Morita-Tachikawa correspondence \cite{A,Yam} gives a bijection between Morita equivalence classes of algebras $A$ with $\domdim A\ge2$ and Morita equivalence classes of pairs $(B,M)$ of algebras $B$ and generator-cogenerators $M$ of $B$.  The map $(B,M)\mapsto A$ is given by the following proposition.

\begin{proposition}\cite{A}\label{From B to A}
Let $B$ be an algebra, $M$ a generator-cogenerator of $B$, and $A:=\End_B(M)$.
\begin{enumerate}[\rm(1)]
\item $\domdim A\ge2$.
\item Let $e,f$ be idempotents of $A$ such that $eA=\Hom_B(M,DB)$ and $fA=\Hom_B(M,B)$. Then \eqref{eAf} is satisfied.
\end{enumerate}
\end{proposition}
The map $A\mapsto(B,M)$ is given by the following proposition.

\begin{proposition}\cite{A}\label{prop:DCP}
For an algebra $A$ with $\domdim A\ge 2$, let $e,f\in A$ be idempotents satisfying \eqref{eAf}, and $B:=fAf$. 
Then the following hold.
\begin{enumerate}[\rm (1)]
\item $M:=Af$ is a generator-cogenerator over $B$, i.e. $B\oplus DB\in \add M$.
\item We have an isomorphism $A\to\End_B(M)$ of algebras sending $a\in A$ to $(a\cdot)\in\End_B(M)$.
\item For $X\in\ind(\add M)$, the indecomposable projective $A$-module $\Hom_B(M,X)$ is injective if, and only if, $X\in\inj B$.  In such a case, we have $\Hom_B(M,X)\simeq D\Hom_B(\nu^-(X),M)$.
\end{enumerate}
\end{proposition}

Under the setting of Proposition \ref{prop:DCP}, the following $(B,A)$-bimodules are isomorphic, and are additive generators of $\proj A\cap\inj A$:
\[
D(Af) \simeq eA\simeq  \Hom_{B}(M, DB) \simeq \Hom_{fAf} (Af, D(Af)f)\simeq D\Hom_{B}(B, M) \simeq  D\Hom_{fAf}(eAf,Af).
\]

\begin{proof}
For convenience of the reader, we include a proof.

(1) The $B$-module $B=fAf$ belongs to $\add Af=\add M$.
Since $\add D(Af)=\add eA$ as subcategories of $\mod A$, we have $\add D(Af)f=\add eAf$ as subcategories of $\mod B$.
Thus the $B$-module $DB=D(fAf)=(D(Af))f$ belongs to $\add eAf\subset\add M$.

(2) Take an injective resolution $0\to A\to J^0\to J^1$ with $J^i\in\add D(Af)$. Multiplying $f$ from the right, we have an exact sequence $0\to M\to J^0f\to J^1f$ of $B$-modules.
Applying $\Hom_B(M,-)=\Hom_B(Af,-)$ to the second sequence and comparing with the first sequence, we have a commutative diagram of exact sequences
\[\xymatrix@R1em{
0\ar[r]&\End_B(M)\ar[r]&\Hom_B(Af,J^0f)\ar[r]\ar@{=}[d]&\Hom_B(Af,J^1f)\ar@{=}[d]\\
0\ar[r]&A\ar[r]&J^0\ar[r]&J^1,
}\]
where we used $\Hom_B(Af,D(Af)f)=D(Af)$. Thus $\End_B(M)\simeq A$.

(3) is a direct consequence of the properties of the inverse Nakayama functor, see for example \cite[Chapter III. Proposition 2.11]{ASS}.
\end{proof}

The following is a special case of results from \cite{APT}.
\begin{proposition} \label{APTpropo}
Let $A$ be an algebra with $\domdim A\ge 1$ and $B=fAf$ be its base algebra.
\begin{enumerate}[\rm(1)]
\item The adjoint pair $(-\otimes_A A f,\Hom_B(A f,-))$ yields equivalences $\Dom_2(A) \simeq \mod B$, $\proj A\simeq \add Af$ and $\proj A\cap\inj A \simeq \inj B$.
\item If $M\in \Dom_{k+1}(A)$, then $\Ext_A^i(X,M)\simeq \Ext_B^i(Xf,Mf)$ for all $i\in[1, k-1]$ and any $X\in \mod A$.
\item $\domdim_A( \Hom_B(Af,X) )=\inf \{ i \geq 1 \mid  \Ext_B^i(Af,X) \neq 0 \} +1$.
\end{enumerate}
\end{proposition}
\begin{proof}
(1) is \cite{APT}, lemma 3.1, (2) is \cite{APT}, theorem 3.2, and (3) is \cite{APT}, proposition 3.7.
\end{proof}


In this article, a \emph{higher Auslander algebra} (more explicitly, \emph{$(d-1)$-Auslander algebra}) is an algebra $A$ with $\gldim A \leq d \leq \domdim A$ for some $d \geq 0$.
Note also that a basic $0$-Auslander algebra is precisely
a direct product of upper triangular matrix algebras over division rings, see \cite{I7}.
Generalising higher Auslander algebras, a \emph{minimal Auslander-Gorenstein algebra} (more explicitly, \emph{$(d-1)$-minimal Auslander-Gorenstein algebra}) is an algebras $A$ with $\idim A \leq d \leq \domdim A$ for some $d \geq 1$ \cite{IyaSol}. 
We refer also to \cite{CheKoe} for an alternative approach to minimal Auslander-Gorenstein algebras and \cite{CIM} for the construction and many examples of such algebras. 

For a subcategory $\CC$ of $\mod B$ we define for an integer $n$ the following orthogonal categories
\begin{align}\label{define perp}\notag
^{\perp n}\CC&:= \{ X \in \mod B \mid  \mbox{$\Ext_B^i(X,Y)=0$ for all $Y \in\CC$ and $1\le i\le n$}\},\\
\CC^{\perp n}&:= \{ X \in \mod B \mid  \mbox{$\Ext_B^i(Y,X)=0$ for all $Y \in\CC$ and $1\le i\le n$}\}.
\end{align}
For example, $^{\perp -1}\CC={}^{\perp 0}\CC=\CC^{\perp 0}=\CC^{\perp -1}=\mod B$. Note that when $\CC=\add M$ for some module $M\in\mod B$ we often write $M^{\perp n}$ or $^{\perp n}M$ instead of $\add M^{\perp n}$ or $^{\perp n}\add M$. 
For $d\ge1$, we call $M\in\mod B$ \emph{$d$-precluster tilting} if
\begin{equation}\label{define preCT}
M^{\perp d-1}={}^{\perp d-1}M.
\end{equation}
This is equivalent to saying that $\End_B(M)$ is a $d$-minimal Auslander-Gorenstein algebra, see \cite{IyaSol}.
If, in addition, the above orthogonal categories coincide with $\add M$, that is,
\begin{equation}\label{define CT}
\add M=M^{\perp d-1}={}^{\perp d-1}M
\end{equation}
holds, then we call $M$ a \emph{$d$-cluster tilting} module.
This is equivalent to saying that $\End_B(M)$ is a $d$-Auslander algebra.
By (pre)cluster tilting module, we mean a $d$-(pre)cluster tilting module for some $d\ge 1$.

Denote by $\nu:=-\otimes_A DA$ the \emph{Nakayama functor} on $\mod A$, and by $\nu^-:=\Hom_A(DA,-)$ its adjoint.  Note that these yield $\nu:\proj A\simeq \inj A$.
Denote by $\tau$ the Auslander-Reiten translate and by $\tau^-$ the inverse Auslander-Reiten translate.  Note that 
\begin{align}\label{eq:tau=derived nu}
\tau(M)=\Kernel(\nu(P'\xrightarrow{f} P)) \text{ and } \tau^-(M)=\Cokernel(\nu^-(I\xrightarrow{g} I'))
\end{align}
for $P'\xrightarrow{f} P\to M\to 0$ the projective presentation of $M$ and $0\to M\to I\xrightarrow{g} I'$ the injective copresentation of $M$.
For $d\ge 1$, denote by $\tau_d:=\tau \Omega^{d-1}$ and $\tau_d^{-}:=\tau^{-1} \Omega^{-(d-1)}$.

Suppose $\CC$ is a full subcategory of $\mod A$ containing $A$ (resp. $DA$).
Denote by $\underline{\CC}$ (resp. $\overline{\CC}$) the additive quotient of $\CC$ by the ideal consisting of all morphisms factoring through modules in $\add A$ (resp. $\add DA$).  In \cite[Theorem 1.4.1]{I5}, it is shown that for each $d\ge 1$, there is an equivalence of additive categories
\begin{equation}\label{eq:stable equiv perpA DAperp}
\begin{tikzcd}
\tau_d : \underline{{}^{\perp d-1} A} \ar[r,"\sim"]& \overline{DA^{\perp d-1}}: \tau_d^-
\end{tikzcd}
\end{equation}
This induces the higher Auslander-Reiten duality \cite[Theorem 1.5]{I5}
\begin{equation}\label{eq:AR dual}
\Ext_A^{d-i}(X,Z) \simeq D\Ext_A^{i}(Z,\tau_d X) \text{ and } \underline{\Hom}_A(X,Z):=\Hom_{\underline{\mod A}}(X,Z)\simeq D\Ext_A^{d}(Z,\tau_dX).
\end{equation}

\medskip

The \emph{grade} of a module $M$ is defined as $\gr M:= \inf \{i \geq 0 \mid  \Ext_A^i(M,A) \neq 0 \}$ and dually the \emph{cograde} is defined as $\cogr M:= \inf \{ i \geq 0 \mid  \Ext_A^i(DA,M) \neq 0 \}$. The \emph{strong grade} $\sgr M$ of a module $M$ is the infimum of $\gr N$ for all submodules $N$ of $M$.

\begin{definition}\label{define AG}\cite{FGR}
Let $A$ be an algebra and
\begin{equation}\label{injective coresolution of A}
0\to A\to I^0\to I^1\to\cdots
\end{equation}
a minimal injective coresolution of the $A$-module $A$.
We call $A$ \emph{Auslander-Gorenstein} when $A$ is Iwanaga-Gorenstein and additionally satisfies the following equivalent conditions.
\begin{enumerate}
\item $\pdim I^i \leq i$ for all $i \geq 0$.
\item For each $i\ge1$ and $X\in\mod A$, we have $\sgr\Ext^i_A(X,A)\ge i$.
\end{enumerate}
\end{definition}

When an Auslander-Gorenstein algebra $A$ has finite global dimension, $A$ is called \emph{Auslander-regular}. We refer for example to the survey article \cite{C} for more on Auslander-Gorenstein algebras.
An algebra $A$ is Auslander-Gorenstein if and only if its opposite algebra is Auslander-Gorenstein.  
Given an algebra $A$ with minimal injective coresolution \eqref{injective coresolution of A}, we define the set of \emph{dominant numbers} of $A$ to be
\[
\DN(A):=\{ n\geq 0 \mid \pdim I^i < \pdim I^n\text{ for all }0\leq i<n\}.
\]
Note that $0\in \DN(A)$ always.
For $l \geq 0$ we define the subcategories
\begin{align*}
\Gr_l(A)&:= \{ M \in \mod A \mid  \gr M \geq l \},\\
\SGr_l(A)&:= \{ M \in \mod A \mid  \sgr M \geq l \}.
\end{align*}
It is elementary that $\SGr_l(A)$ is a Serre subcategory of $\mod A$, that is, closed under subfactor modules and extensions.
We will need the following result.

\begin{proposition} \label{gradetheorem}
Suppose $A$ is an Auslander-Gorenstein algebra.  Then the following hold.
\begin{enumerate}[\rm(1)]
\item $\DN(A)=\DN(A^\op)=\{\gr X\mid X\in\mod A\}$.
\item $\Gr_l(A)=\SGr_l(A)$ and this subcategory is a Serre subcategory of $\mod A$ for all $0\leq l\leq \idim A$.
\end{enumerate}
\end{proposition}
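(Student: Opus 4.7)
The strategy is to establish part (2) first and then deduce part (1) from the resulting grade filtration.

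\emph{Part (2).} The inclusion $\SGr_l(A)\subseteq \Gr_l(A)$ is trivial: setting $N=M$ in the definition of strong grade gives $\gr M \ge \sgr M \ge l$. Since the excerpt already records that $\SGr_l(A)$ is a Serre subcategory, the Serre property for $\Gr_l(A)$ will follow at once from the reverse inclusion. Thus the plan is to show that $\Gr_l(A)$ is closed under submodules. Fix $N\hookrightarrow M\in \Gr_l(A)$ with cokernel $L$; the long exact Ext sequence together with $\Ext^i_A(M,A)=0$ for $i<l$ yields monomorphisms
\[
\Ext^i_A(N,A)\hookrightarrow \Ext^{i+1}_A(L,A)\quad\text{for }0\le i\le l-1.
\]
The Auslander condition (Definition \ref{define AG}(2)) gives $\sgr \Ext^{i+1}_A(L,A)\ge i+1$, and since strong grade is inherited by submodules,
\[
\sgr_{A^\op}\Ext^i_A(N,A)\ge i+1\quad\text{for }0\le i\le l-1.
\]

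I would then induct on $l\le \idim A$. The base $l=0$ is vacuous. For the inductive step, the hypothesis applied to $l-1$ gives $\Ext^i_A(N,A)=0$ for $i<l-1$, so only the top case $Z:=\Ext^{l-1}_A(N,A)$ remains. The displayed inequality gives $\sgr_{A^\op} Z\ge l$, and the essential task is to upgrade this to $Z=0$. For this, I would apply the Auslander condition a second time, now on the Auslander-Gorenstein algebra $A^\op$, to the module $Z$, and combine with the biduality $\RHom_{A^\op}(\RHom_A(-,A),A)$ together with the Iwanaga-Gorenstein bound $\gr Y\le \idim A$ for every nonzero $Y$. The iterated Ext applied to $Z$ is then forced to vanish, giving $Z=0$ and completing the induction.

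\emph{Part (1).} Granted (2), the Serre filtration $\mod A=\Gr_0(A)\supseteq \Gr_1(A)\supseteq \cdots$ drops strictly precisely at the indices $\{\gr X:X\in \mod A\}$. To match this set with $\DN(A)$, I would exploit minimality of the injective coresolution: for a simple $S$, $\Ext^i_A(S,A)$ is controlled by $\Hom_A(S,I^i)$, which by minimality equals the multiplicity of $S$ in $\soc I^i$. The Auslander bound $\pdim I^i\le i$ then ensures that a new grade value $n$ first appears exactly at the position where $\pdim I^n$ strictly exceeds all previous $\pdim I^i$, that is, at $n\in \DN(A)$. The identity $\DN(A)=\DN(A^\op)$ follows from the duality $D$, which sends the minimal injective coresolution of $A$ to the minimal projective resolution of $DA$ as right $A^\op$-module; the two ways of reading off jump indices of projective dimensions coincide.

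\emph{Main obstacle.} The decisive step is upgrading $\sgr_{A^\op} Z\ge l$ to the outright vanishing $Z=0$ in the inductive step of (2). The Auslander condition alone only propagates strong-grade inequalities; turning such an inequality into a vanishing result requires iterating the condition via biduality and using the full Iwanaga-Gorenstein hypothesis. This is precisely where the restriction $l\le \idim A$ in the statement becomes essential, since without it the Ext terms need not be bounded in grade.
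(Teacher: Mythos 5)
The paper does not actually prove this proposition; it cites Iyama's \emph{Symmetry and duality on $n$-Gorenstein rings} ([I2, Theorem 1.1 and Proposition 2.4]), so your attempt at a self-contained argument goes beyond what the paper records. Your reduction of (2) to closure of $\Gr_l(A)$ under submodules is sound, and so is the first half of the inductive step: the long exact sequence embeds $Z=\Ext^{l-1}_A(N,A)$ into $\Ext^l_A(L,A)$, and the Auslander condition gives $\sgr_{A^\op}Z\ge l$. The genuine gap is exactly at the point you flag, and the route you sketch does not close it. From $\gr_{A^\op}Z\ge l$ alone nothing forces $Z=0$: any nonzero module of grade $l$ satisfies this, and applying the Auslander condition and biduality \emph{to $Z$} merely reproduces the inequality (the bidualization spectral sequence filters $Z$ by subquotients of double-Ext modules $\Ext^p(\Ext^p(Z,A),A)$ with $p\ge l$, all again of grade $\ge l$ --- no contradiction). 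What must be used is that $Z$ is the degree-$(l-1)$ cohomology of $\RHom_A(N,A)$ while having grade $\ge l$: run the convergent spectral sequence $\Ext^p_{A^\op}(\Ext^q_A(N,A),A)\Rightarrow N$ (convergence needs the Iwanaga--Gorenstein hypothesis), note that the total-degree-zero contribution at $p=l-1$ is a subquotient of $\Ext^{l-1}_{A^\op}(Z,A)=0$ because $\gr_{A^\op}Z\ge l$, and that every contribution with $p\ge l$ has $A$-grade $\ge l$ by the Auslander condition on $A^\op$; this filters $N$ by modules of grade $\ge l$ and contradicts $\gr N=l-1$. This lemma is the substance of the proposition and is absent from your write-up.

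Part (1) contains a sharper error: $\DN(A)=\DN(A^\op)$ does \emph{not} follow from the duality $D$. The functor $D$ takes the minimal injective coresolution of $A_A$ to the minimal projective resolution of $D(A)$ as a right $A^\op$-module, whereas $\DN(A^\op)$ is read off from the minimal injective coresolution of $A$ as a \emph{left} module; these two resolutions are not related by any formal duality, and the coincidence of their jump sets is precisely the symmetry theorem of [I2] that the paper is citing. Similarly, identifying $\{\gr X\mid X\in\mod A\}$ with the record positions of $\pdim I^n$ (in particular, showing $\pdim I^n=n$ for $n\in\DN(A)$) requires the grade machinery of part (2) and is not a consequence of minimality of the coresolution together with $\pdim I^i\le i$ alone.
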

\begin{proof}
(1) is \cite[Theorem 1.1]{I2}, (2) is a special case of \cite[Proposition 2.4]{I2} due to the discussion after \cite[Definition 2.2]{I2}.
\end{proof}

\section{Dominant Auslander-Gorenstein algebras and basic properties}

\subsection{The definition and the first properties}

\begin{definition}
An algebra $A$ is a \emph{dominant Auslander-Gorenstein algebra} if it is Iwanaga-Gorenstein and $\idim P \leq \domdim P$ for every indecomposable projective module $P$.
If, furthermore, $\gldim A<\infty$, then we call it a \emph{dominant Auslander-regular algebra}.
\end{definition}
Note that for a dominant Auslander-Gorenstein algebra $\idim P<\domdim P$ holds if and only if $P$ is also injective; otherwise, we have $\domdim P=\idim P$.
In particular, a dominant Auslander-Gorenstein algebra is self-injective if and only if we have strict inequality for all $P$.

\begin{proposition} \label{strongAGpropo}
Suppose $A$ is a dominant Auslander-Gorenstein algebra.  Then the following hold.
\begin{enumerate}[\rm(1)]
\item For every indecomposable projective $A$-module $P$ with $d:=\idim P\geq 0$, the minimal injective coresolution 
\begin{align*}
0\to P \to T^0 \to T^1 \to \cdots \to T^{d-1} \to I \to 0
\end{align*}
has indecomposable injective $I$ and projective-injective $T^k$ for all $0\leq k<d$.  In particular, this is also the the minimal projective resolution of $I$.
\item The map $P \mapsto I=\Omega^{-\idim P}(P)$ defines a bijection $\pi:\ind(\proj A)\to \ind(\inj A)$ with inverse $I\mapsto \Omega^{\pdim I}(I)$.
\item $A^\op$ is also dominant Auslander-Gorenstein.
\item $\DN(A)=\{\idim P\mid P\text{ indecomposable projective}\}$.
\item $A$ is Auslander-Gorenstein.
\end{enumerate}
\end{proposition}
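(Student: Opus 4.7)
The plan is to establish items (1)--(5) in a consolidated order: part (1) first (with (5) as an immediate corollary), then (2), (3), and (4).  The substantive work lies in part~(1), specifically the indecomposability of $I = T^d$; the remaining assertions follow via essentially formal arguments.

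For the easy half of (1) and for (5): the projective-injectivity of $T^0, \ldots, T^{d-1}$ is immediate from $\domdim P \geq d = \idim P$ together with the definition of dominant dimension.  Reading the exact sequence $0 \to P \to T^0 \to \cdots \to T^{d-1} \to T^d \to 0$ as a projective resolution of $T^d$ (since $P$ and each $T^i$ for $i < d$ is projective) gives $\pdim T^d \leq d$; summing the coresolutions of all indecomposable projectives $P$ yields $\pdim I^i \leq i$ for the $i$-th term of the minimal injective coresolution of $A$, which is~(5).

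For the indecomposability of $T^d$ (the case $d=0$ is trivial as $P$ is then injective), I first establish $\pdim T^d = d$ exactly.  If $\pdim T^d < d$, the projective resolution above would differ from the minimal one by trivial direct summands of the form $0 \to P' \to P' \to 0$; any such piece at positions $(d-1,d)$ would force the indecomposable projective $P$ to be a summand of the projective-injective $T^{d-1}$, making $P$ injective and contradicting $d \geq 1$.  Hence $P = \Omega^d(T^d)$ in the minimal projective resolution.  Now assume for contradiction $T^d = I_1 \oplus I_2$ with $I_1, I_2$ both non-zero indecomposable injective.  If both summands have $\pdim = d$, then $P = \Omega^d(I_1) \oplus \Omega^d(I_2)$ is a non-trivial decomposition, contradicting $P$ indecomposable.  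Otherwise $d' := \pdim I_2 < d$ (without loss of generality); then the minimal projective resolution of $I_2$ occurs as a direct summand of that of $T^d$, and each of its terms is a summand of some $T^i$ with $i < d$, hence projective-injective.  In particular the last term is injective, so when $d' \geq 1$ the final differential $R^{d'} \hookrightarrow R^{d'-1}$ splits and violates minimality; when $d' = 0$, one chooses a section of $T^{d-1} \twoheadrightarrow I_2$ whose image lies in the kernel of $T^{d-1} \to I_1$ (possible since $I_2$ is projective and the latter map is surjective), yielding a direct-summand decomposition of the injective coresolution of $P$ with a trivial piece $0 \to I_2 \to I_2 \to 0$, contradicting its minimality.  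This completes~(1).

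Given~(1), the remaining items follow.  For~(2), $\pi \colon P \mapsto T^d$ is injective because any preimage of an indecomposable injective $I$ is forced to be $\Omega^{\pdim I}(I)$, and is therefore bijective by the equal cardinalities of the isomorphism classes of indecomposable projectives and injectives.  For~(3), applying $D$ to the coresolution of $P$ produces a projective resolution of the indecomposable injective $DP \in \mod A^\op$ in which all but the last term is projective-injective, which is precisely the dominant Auslander-Gorenstein condition for $A^\op$.  For~(4), inspection of the coresolution structure established in~(1) shows that $\pdim I^n$ equals $n$ whenever some indecomposable projective has $\idim P = n$ and equals $0$ otherwise, identifying $\DN(A)$ with $\{\idim P \mid P \text{ indecomposable projective}\}$.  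The main obstacle is the minimality-based argument in the decomposition case of paragraph~3, where the duality between the projective-injective property and minimality of resolutions provides the key ingredient.
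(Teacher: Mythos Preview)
Your argument is correct and follows essentially the same route as the paper's proof: both establish that $T^0,\dots,T^{d-1}$ are projective-injective from the dominant-dimension hypothesis, and both deduce indecomposability of $T^d$ by showing that any nontrivial decomposition would force $P=\Omega^d(T^d)$ to decompose. The paper compresses your case analysis by first observing that $T^d$ has no projective summand (your $d'=0$ case) and then invoking codominant dimension to see $\Omega^d(X_j)\neq 0$ for each summand (your $d'\ge 1$ case unwound); but the underlying mechanism is identical.

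One small slip: when you argue that a trivial summand at the top of the projective resolution forces $P$ to be a summand of a projective-injective term, that term is $T^0$, not $T^{d-1}$ (in the projective resolution of $T^d$, the module $P$ sits in homological degree $d$ and maps to $T^0$ in degree $d-1$). Since $T^0$ is equally projective-injective, your conclusion that $P$ would then be injective is unaffected. Also, in your decomposition $T^d=I_1\oplus I_2$ you need only assume both summands nonzero, not indecomposable; your argument nowhere uses indecomposability of $I_2$.
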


\begin{proof}
By assumption $A$ is Iwanaga-Gorenstein. Let $n:=\idim A$ denote the self-injective dimension of $A$.  If $n=0$, then $A$ is self-injective and the claim is trivial; hence, we will assume $n>0$ from now on.
Label the indecomposable projective (resp. injective) $A$-modules by $P_1,\ldots, P_m$ (resp. $I_1,\ldots, I_m$) so that $\top P_i=\soc I_i$ for all $i$.

(1) When $P_i$ is injective, the claim is trivial.
Consider a non-injective $P_i$ with $d_i :=\domdim P_i=\idim P_i$, so there is the following minimal injective coresolution:
$$0 \rightarrow P_i \rightarrow T_i^0 \rightarrow T_i^1 \rightarrow \cdots \rightarrow T_i^{d_i} \rightarrow 0.$$
Since $P_i$ is non-injective, we have $\domdim P_i=\idim P_i$, which then means that the module $T_i^k$ are projective-injective for $0\leq k < d_i$. 
Now $T_i^{d_i}$ is injective and has no projective direct summands or else the resolution would split off this summand and would not be minimal. If $T_i^{d_i}=X_1 \oplus X_2$ for two non-zero direct summands $X_1$ and $X_2$ then those summands would have both codominant dimension at least $d_i$ by the above minimal coresolution of $P_i$. But then $\Omega^{d_i}(X_1)$ and $\Omega^{d_i}(X_2)$ would both be non-zero, contradicting that $P_i$ is indecomposable. Thus $T_i^{d_i}$ must be indecomposable.

(2) This follows from (1) immediately.

(3) By (1), we have $\codomdim I_{\pi(i)} \leq \pdim I_{\pi(i)}$ with strict inequality if and only if $I_{\pi(i)}\simeq P_i\in \proj A$.  By $K$-linear duality this means that $A^\op$ is dominant Auslander-Gorenstein.

(4) As $\pdim T_i^k=0$ for all $0\leq k< d_i$ and $\pdim T_i^{d_i} = \pdim I_{\pi(i)}=d_i$, the claim on $\DN(A)$ now follows.

(5) Since the minimal injective coresolution $(I^k)_{k\geq 0}$ of $A$ is the direct sum of those for $P_i$ over all $i$'s, we have $\pdim I^k =\pdim \bigoplus_i T_i^k$, which is $k$ if $k=d_i$, otherwise, $0$.  Thus, it immediately follows that $A$ is Auslander-Gorenstein.
\end{proof}

By Theorem \ref{gradetheorem}, let $A$ be a dominant Auslander-Gorenstein algebra with $\DN(A)=\{d_1<d_2<\cdots d_r\}$, then for each $1\leq i<r$, we have $\Gr_{d_i+1}(A)=\Gr_{d_i+2}(A)=\cdots =\Gr_{d_{i+1}}(A)$. 
In other words, knowledge of the subcategories $\Gr_{\idim P}(A)$ over all indecomposable projective $P$ gives complete information on the grades of modules.

\begin{proposition} \label{gradepropo}
Let $A$ be a dominant Auslander-Gorenstein algebra, $0 \rightarrow A \rightarrow I^0 \rightarrow I^1 \rightarrow \cdots $ a minimal injective coresolution of the regular module $A$, and  $\DN(A)= \{0=d_0<d_1<\cdots<d_r\}$. 
Define the idempotents $f_i$ so that $D(Af_i)$ is the basic additive generator of $\add(\bigoplus_{0\leq j\leq i}{I^{d_j}})$ for all $0\leq i\leq r$.
Then $\Gr_{d_i}(A) \simeq \mod A/Af_{i-1}A$ for all $i$ with $f_{-1}:=0$ 
\end{proposition}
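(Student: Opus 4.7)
The plan is to identify both $\Gr_{d_i}(A)$ and $\mod A/Af_{i-1}A$ (viewed inside $\mod A$ via the natural embedding as the full subcategory of modules $M$ with $Mf_{i-1}=0$) as Serre subcategories of $\mod A$, and then to verify that they contain the same simple modules. Since a Serre subcategory of $\mod A$ is determined by the simple objects it contains, this will suffice. The case $i=0$ is immediate from $f_{-1}=0$ and $\Gr_{0}(A)=\mod A$, so I will assume $i\ge 1$ throughout. Let $P_j=e_jA$ denote the indecomposable projectives with simple tops $S_j$, and set $I_j:=D(Ae_j)$, the injective envelope of $S_j$.

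That $\Gr_{d_i}(A)$ is Serre is precisely Proposition \ref{gradetheorem}(2), while the Serre property of $\mod A/Af_{i-1}A$ inside $\mod A$ is standard. A simple $S_j$ lies in the latter iff $e_j$ is not a primitive summand of $f_{i-1}$, equivalently (by the definition of $f_{i-1}$) iff $I_j=D(Ae_j)$ does not appear as an indecomposable summand of $\bigoplus_{0\le k\le i-1}I^{d_k}$. Using the explicit form of the minimal injective coresolution from Proposition \ref{strongAGpropo}(1), the indecomposable summands of this direct sum are exactly the indecomposable injectives of projective dimension $\le d_{i-1}$: the terminal terms $T_r^{d_r}=I_{\pi(r)}$ with $d_r\le d_{i-1}$ account for those of projective dimension $d_r$, while every other summand is projective-injective and hence of projective dimension zero. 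Consequently the simples in $\mod A/Af_{i-1}A$ are precisely $\{S_j:\pdim I_j\ge d_i\}$.

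It remains to show the simples of $\Gr_{d_i}(A)$ form the same set, for which I plan to prove the key identity $\gr S_j=\pdim I_j$. Applying the $K$-duality $D$ to the minimal injective coresolution $0\to A\to I^\bullet$ yields a minimal projective resolution $D(I^\bullet)\to D(A)$ of the left $A$-module $D(A)$. Via the natural isomorphism $\Ext_A^k(S_j,A)\cong D\Ext_{A^\op}^k(D(A),D(S_j))$, together with the fact that the differentials in $\Hom_{A^\op}(D(I^\bullet),D(S_j))$ vanish by minimality of the resolution, one deduces that $\Ext_A^k(S_j,A)\ne 0$ iff $Ae_j$ is a summand of $D(I^k)$, equivalently iff $I_j$ is a summand of $I^k$. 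Proposition \ref{strongAGpropo}(1) then pins down when this first occurs: if $\pdim I_j>0$ then $I_j$ is not projective-injective and can only appear as the terminal term $T_r^{d_r}$ of the coresolution of $P_r=\pi^{-1}(I_j)$, sitting at position $d_r=\pdim I_j$; while if $\pdim I_j=0$ then $I_j$ is itself projective-injective and already appears as a summand of $I^0$. In either case the minimum $k$ equals $\pdim I_j$, so $\gr S_j=\pdim I_j$, yielding the desired matching of simples and completing the proof. The main technical point is this last computation, and its success ultimately rests on the very rigid form of minimal injective coresolutions in a dominant Auslander-Gorenstein algebra guaranteed by Proposition \ref{strongAGpropo}(1).
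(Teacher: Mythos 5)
Your proof is correct and follows essentially the same route as the paper's: both arguments reduce the statement to matching the simple objects of two Serre subcategories, detect $\gr S_j$ via $\Ext^k_A(S_j,A)\cong\Hom_A(S_j,I^k)$ on the minimal injective coresolution, and invoke Proposition \ref{strongAGpropo}(1) to locate where each indecomposable injective occurs. The only cosmetic difference is that you package the computation as the identity $\gr S_j=\pdim I_j$, whereas the paper argues directly that a simple fails to embed into $I^{<d_i}$ exactly when it fails to embed into $\bigoplus_{j<i}I^{d_j}=D(Af_{i-1})$.
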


\begin{proof}
By Theorem \ref{gradetheorem} (b), for any $0\leq t\leq \idim A$, the subcategory $\Gr_t(A)$ is a Serre subcategory and so
there is an idempotent $e_t\in A$ so that $\Gr_t(A) \simeq \mod A/Ae_tA$.

For $d=0\in \DN(A)$, it is clear that $\Gr_0(A)=\mod A$.
Suppose $d=d_i\in \DN(A)$ is a dominant number for some $i>0$.
A simple module $S$ is in $\Gr_d(A)$ if and only if $\Ext_A^k(S,A)=0$ for all $0 \leq k < d$, which is equivalent to $\Hom_A(S,I^k)=0$ for all $0 \leq k < d$. Thus $S \in \Gr_d(A)$ if and only if $S$ is not a submodule of $I^{<d}:= \bigoplus_{k< d}{I^{k}}$.  For $k\notin \{d_1,\ldots, d_i=d\}$, as $I^k$ is projective we have $\soc(I^k)\in \Gr_0(A)\setminus \Gr_{d_1}(A)$.  Hence, $S$ is not a submodule of $I^{<d}$ if and only if it is not a submodule of $\bigoplus_{j<i}{I^{d_j}}=D(Af_{i-1})$, which is equivalent to $S \in\mod A/Af_{i-1}A$. 
\end{proof}

\subsection{Relation with the Gorenstein condition}
The \emph{Gorenstein condition} is a non-commutative modification of a property of commutative Gorenstein rings and was introduced in the study of Artin-Schelter regular algebras.
The following condition is an analogue of the Gorenstein condition, and was studied under the additional assumption that $S$ has finite projective dimension in \cite{E,MRS} where it is called $k$-regular.

\begin{definition}
A simple module $S$ over an algebra $A$ is called \emph{$k$-Gorenstein} if for $i\geq 0$, $\Ext_A^i(S,A) \neq 0$ if and only if $i=k$ and $\Ext_A^k(S,A)$ is a simple $A^{\op}$-module. We will consider the following condition:
\begin{enumerate}
\item[(G)] $A$ is Iwanaga-Gorenstein and a simple $A$-module $S$ is $k$-Gorenstein for some $k\ge1$ if the injective envelope $I(S)$ of $S$ is not projective. 
\end{enumerate}
\end{definition}

We give the following example, which shows that 0-Gorenstein simple modules correspond to simple algebra direct summands for algebras of finite global dimension.

\begin{example} \label{0-Gorensteinlemma}
Let $A$ be a finite dimensional algebra. Any simple $A$-module corresponding to a simple algebra direct summand of $A$ is 0-Gorenstein. Conversely, if $A$ has finite global dimension, then any $0$-Gorenstein simple $A$-module is obtained in this way.
\end{example}

\begin{proof}
The first assertion is clear. We prove the second one. Without loss of generality, we can assume that $A$ is not simple.
It is basic that $\pdim M= \sup \{ i \geq 0 \mid \Ext_A^i(M,A) \neq 0 \}$ holds for each $M\in\mod A$ with $\pdim M<\infty$.
In particular, each simple module $S$ that is 0-Gorenstein has to be projective since $A$ is assumed to have finite global dimension. Thus we can write $S=eA$ for a primitive idempotent $e\in A$. Since $Ae=\Hom_A(S,A)$ is simple, we have $(1-e)Ae=0=eA(1-e)$.
Thus $S$ is given by a simple algebra direct summand $eAe$.
\end{proof}

\begin{lemma} \label{from P to S}
Assume that $A$ satisfies the condition (G). For every indecomposable projective non-injective $A$-module $P$, there exists a unique simple $A$-module $S$ with injective envelope $I(S)$ non-projective and $\Ext^k_A(S,P)\neq0$ for some $k\ge1$. In particular, the minimal injective coresolution of $P$ is of the form
\begin{align}\label{eq:cond G resolution}
0 \to P\to I^0 \to I^1 \to \cdots \to I^{k-1}\to I(S) \to 0
\end{align}
with $I^0, I^1, \ldots, I^{k-1}$ projective-injective and this defines a bijection $\pi:\ind(\proj A)\to \ind(\inj A)$ given by $P\mapsto \Omega^{-\idim P}(P)$.
\end{lemma}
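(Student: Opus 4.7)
The plan is to analyze the minimal injective coresolution
\[0 \to P \to I^0 \to I^1 \to \cdots \to I^d \to 0\]
of the indecomposable projective non-injective $P$, where $d = \idim P \geq 1$, and to identify each $I^j$ using condition (G).

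First I would exploit minimality: for any simple $T$, the induced differentials on $\Hom_A(T,-)$ vanish because $\soc I^j \subseteq \ker(I^j \to I^{j+1})$, hence $\Ext^j_A(T, P) \cong \Hom_A(T, I^j)$ as vector spaces. Combining with condition (G) and the fact that $\Ext^j_A(T, P)$ is a direct summand of $\Ext^j_A(T, A)$, I obtain that whenever $I(T)$ is non-projective, $I(T)$ can only appear as a summand of $I^j$ when $j = k_T$, with multiplicity at most one; it appears (necessarily with multiplicity one) precisely when $P = P_T$, where $P_T$ is the unique indecomposable projective featuring non-trivially in the simple left $A$-module $\Ext^{k_T}_A(T, A)$. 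Observe that $P_T$ is necessarily non-injective, since $k_T \geq 1$ forces $\Ext^{k_T}_A(T, P_T) = 0$ whenever $P_T$ is injective.

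The crux of the argument is to show that $I^d$ has a non-projective summand, and in fact equals $I(S)$ for a unique simple $S$. If $I^d$ were entirely projective-injective, the surjection $I^{d-1} \twoheadrightarrow I^d$ would split (as $I^d$ would then be projective), yielding a shorter injective coresolution of $P$ of length $d-1$ and contradicting $\idim P = d$. Thus some non-projective $I(T)$ summand occurs, forcing $k_T = d$ and $P_T = P$. This shows the assignment $T \mapsto P_T$ from simples with $I(T)$ non-projective to indecomposable non-injective projectives is surjective; a cardinality count (by Iwanaga-Gorensteinness, non-projective indecomposable injectives biject with non-injective indecomposable projectives) then makes it a bijection. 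Denote the unique preimage of $P$ by $S$. Projective-injective summands of $I^d$ are then ruled out by a similar splitting trick: they would force $\ker(I^{d-1} \to I^d)$ to lie inside a proper injective summand of $I^{d-1}$, contradicting the fact that $I^{d-1}$ is the injective envelope of this kernel. For $j < d$, a non-projective summand $I(T')$ of $I^j$ would force $T' = S$ by the bijection, hence $j = k_{T'} = k_S = d$, a contradiction; so $I^j$ must be projective-injective.

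Assembling all this yields the stated form of the minimal injective coresolution and the bijection $\pi$, which is the identity on projective-injectives and sends each non-injective indecomposable projective $P$ to $\Omega^{-\idim P}(P) = I(S)$. The main obstacle I anticipate is handling the splitting arguments cleanly: in particular, when a projective-injective summand is split off $I^d$, verifying that the reduced coresolution is still exact and that the injective envelope of the image of $I^{d-2}\to I^{d-1}$ lies strictly inside a proper summand of $I^{d-1}$, thereby providing the required contradiction with minimality.
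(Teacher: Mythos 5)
Your proof is correct and follows essentially the same route as the paper: condition (G) yields a well-defined map from simples with non-projective injective envelope to indecomposable non-injective projectives, surjectivity is extracted from the socle of the last term of the minimal injective coresolution (using that this term has no projective summands), and a cardinality count upgrades this to a bijection, from which the form of the coresolution follows. Your treatment is somewhat more explicit than the paper's on the splitting arguments ruling out projective summands of the last term, but these are details the paper leaves implicit rather than a different method; also, the equality $\#\{S\mid I(S)\notin\proj A\}=\#(\ind(\proj A)\setminus\ind(\inj A))$ needs no Gorenstein hypothesis — it is just a count of simples minus projective-injectives.
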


\begin{proof}
We consider a map
\begin{align*}
\alpha:{\mathcal S}:=\{S\mid\mbox{simple $A$-module such that $I(S)\notin\proj A$}\}\to{\mathcal P}:=\ind(\proj A)\setminus\ind(\inj A)
\end{align*}
defined as follows: By the condition (G), each $S\in{\mathcal S}$ is $k$-Gorenstein for some $k\ge1$. Then $\Ext^k_A(S,A)$ is a simple $A^\op$-module, so there exists a unique $P\in\ind(\proj A)$ such that $\Ext^k_A(S,P)\neq0$. Clearly $P\in{\mathcal P}$. Now we define $\alpha(S):=P$.

We prove that the map is bijective. Since $\#{\mathcal S}=\#{\mathcal P}$, it suffices to show that the map $\alpha$ is surjective.
For each $P\in{\mathcal P}$, let
\begin{equation*}
0\to P\to I^0\to\cdots\to I^k \to0
\end{equation*}
be a minimal injective coresolution. Clearly $P\in\mathcal{P}$ means that $k\ge1$ holds. Take a simple submodule $S$ of $I^k$.
Since $I^k$ has no projective direct summands, we have $S\in{\mathcal S}$.  By the condition (G), $S$ is $k$-Gorenstein, and we have $\alpha(S)=P$.  Thus $\alpha$ is surjective.  As a consequence of $\alpha$ being bijective, we get that $I^k=I(S)$.

Finally, there is a canonical bijection $\beta:\mathcal{S}\to \ind(\inj A)\setminus \ind(\proj A)$ given by $S\mapsto I(S)$. Thus the restriction of the map $\pi$ to $\ind(\proj A)\setminus\ind(\inj A)$ coincides with $\beta\alpha^{-1}$.
\end{proof}

\begin{theorem} \label{conditionGtheorem}
Let $A$ be a finite dimensional algebra. Then $A$ satisfies the condition (G) if and only if $A$ is dominant Auslander-Gorenstein.
\end{theorem}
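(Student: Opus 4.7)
I prove the two directions using the explicit form of the minimal injective coresolutions provided by Proposition~\ref{strongAGpropo}(1) and Lemma~\ref{from P to S}.

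The direction (G) $\Rightarrow$ dominant Auslander--Gorenstein is immediate from Lemma~\ref{from P to S}: for each indecomposable projective non-injective $P$ it yields an injective coresolution of length $\idim P$ whose intermediate terms are projective-injective, so $\domdim P \geq \idim P$; for projective-injective $P$ the inequality is trivial.

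For the converse, suppose $A$ is dominant Auslander--Gorenstein, hence also Auslander--Gorenstein by Proposition~\ref{strongAGpropo}(5). Let $S$ be a simple $A$-module with $I(S) \notin \proj A$, and set $Q := \pi^{-1}(I(S))$ and $k := \idim Q \geq 1$ using the bijection $\pi$ of Proposition~\ref{strongAGpropo}(2). The main computation is to determine $\Ext^i_A(S, A)$ from the minimal injective coresolution $I^\bullet$ of $A$. By Proposition~\ref{strongAGpropo}(1), $I^\bullet$ is the direct sum over the indecomposable projective summands $P$ of $A$ of their individual coresolutions, each of which has projective-injective intermediate terms and indecomposable injective final term $\pi(P)$. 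Since $I(S) \notin \proj A$, the simple module $S$ does not embed into any projective-injective module, so $\Hom_A(S,-)$ kills every intermediate term; the only surviving contribution is $\Hom_A(S, I(S)) = \End_A(S)$ from the summand $\pi(Q) = I(S)$ at position $k$. Hence $\Hom_A(S, I^\bullet)$ is concentrated in cohomological degree $k$, yielding $\Ext^j_A(S, A) = 0$ for $j \neq k$ and $\Ext^k_A(S, A) \neq 0$, with $\dim_K \Ext^k_A(S,A) = \dim_K \End_A(S)$.

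It remains to show that $\Ext^k_A(S, A)$ is simple as an $A^\op$-module. From the previous step we have $\gr S = k$, so $S$ is simple of grade exactly $k$ and automatically pure (i.e.\ $\sgr S = \gr S$). I appeal to the classical Auslander--Gorenstein duality on pure modules (an elaboration of Proposition~\ref{gradetheorem}): the functor $\Ext^k_A(-, A)$ induces an anti-equivalence between the Serre subquotients $\Gr_k(A)/\Gr_{k+1}(A)$ and $\Gr_k(A^\op)/\Gr_{k+1}(A^\op)$, sending simples to simples. To promote simplicity in this subquotient to simplicity in $\mod A^\op$, I would apply the symmetric form of the main computation to $A^\op$ (dominant Auslander--Gorenstein by Proposition~\ref{strongAGpropo}(3)) on each simple composition factor $T$ of $\Ext^k_A(S, A)$: the condition $\sgr \Ext^k_A(S, A) \geq k$ forces $\gr T \geq k$, which combined with the symmetric computation gives $\Ext^j_{A^\op}(T, A) = 0$ for $j \neq \gr T$; the biduality $\Ext^k_{A^\op}(\Ext^k_A(S, A), A) \cong S$ together with the additivity that comes from these vanishings then forces composition length one. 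I expect this last simplicity step to be the main obstacle, since it leans on Auslander--Gorenstein duality that is classical but not fully developed within the excerpt.
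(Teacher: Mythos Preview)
Your direction (G) $\Rightarrow$ dominant Auslander--Gorenstein via Lemma~\ref{from P to S}, and your computation that $\Ext^j_A(S,A)=0$ for $j\neq k$ with the only surviving term coming from $\Hom_A(S,I(S))$, agree with the paper. The difference is solely in how you establish the simplicity of $T := \Ext^k_A(S,A)$ as an $A^\op$-module.

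Your route through Auslander--Gorenstein subquotient duality and biduality can probably be pushed through, but it is heavier than needed and, as you yourself flag, relies on machinery not set up in the paper. The paper closes this step in two lines. Your Ext computation already shows that $T$ has a \emph{single composition factor type}: only the primitive idempotent $e_0$ with $e_0A=Q=\pi^{-1}(I(S))$ satisfies $e_0 T\neq 0$. Since $\Ext^j_A(S,A)=0$ for all $j\neq k$, we have $T\simeq \RHom_A(S,A)[k]$ in $\D(A^\op)$; because $\RHom_A(-,A):\D(A)\to\D(A^\op)$ is a duality, $\End_{A^\op}(T)\cong\End_A(S)$ is a division ring. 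But a nonsimple module all of whose composition factors are isomorphic always admits a nonzero non-invertible endomorphism (compose $T\twoheadrightarrow\top T\to\soc T\hookrightarrow T$), so $T$ must be simple. This single use of derived duality replaces your entire grade-subquotient argument.
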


\begin{proof}
We prove the ``if'' part first. Assume tat $A$ is dominant Auslander-Gorenstein.  To show that $A$ satisfies the condition (G), we show that a simple $A$-module $S$ with a non-projective injective envelop $I$ is $d$-Gorenstein for $d:=\pdim I$.

By Proposition \ref{strongAGpropo}(1), we immediately have $\Ext_A^i(S,P)=0$ whenever $(i,P)\neq (d,\Omega^{d}(I))$ and that the $A^\op$-module $T:=\Ext_A^d(S,A)$ has a unique composition factor.
Since $T\simeq \RHom_A(S,A)[d]$ and $\RHom_A(-,A)$ gives a duality $\D(A)\to\D(A^{\op})$ of the derived categories, the ring $\End_{A^{\op}}(T)\simeq \End_A(S)$ is a division ring. Thus $T$ is a simple $A^{\op}$-module, and hence $S$ is $d$-Gorenstein.

We now prove the ``only if'' part. Assume that $A$ satisfies the condition (G). For any $P\in\ind(\proj A)$, Lemma \ref{from P to S} says that $\idim P\leq \domdim P$. 
\end{proof}

Let $\mathsf{sim}A$ be the set of (isoclasses of) simple $A$-modules.
For an Auslander-Gorenstein algebra $A$, there is a classical permutation 
\[\mathsf{sim}A\to \mathsf{sim} A\ \mbox{ given by }\ S\mapsto \top D\Ext_A^{\gr S}(S,A),\]
which is called \emph{grade bijection} in \cite{I2}.
If $A$ is dominant Auslander-regular, then each simple $A$-module $S$ satisfies $\Ext^i_A(S,A)=0$ for all $i\neq\gr S$, and hence $D\Ext_A^{\gr S}(S,A)$ coincides with the higher Auslander-Reiten translation $\tau_{\gr S}S$.
This observation relates the grade bijection with the bijection $\pi$ in Proposition \ref{strongAGpropo} (2), which is also the same as the $\pi$ in Lemma \ref{from P to S}.

\begin{corollary}\label{cor:grade bijection}
Let $A$ be a dominant Auslander-Gorenstein algebra.  
For $S\in \mathsf{sim}A$, let $I(S)$ be the injective envelope of $S$.
Then the grade bijection coincides with the map $S\mapsto \top \pi^{-1}(I(S))$.
\end{corollary}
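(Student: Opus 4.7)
The plan is to pin down the simple left $A$-module $\Ext_A^{\gr S}(S,A)$ explicitly by locating its supporting primitive idempotent, and then apply $K$-duality to identify the resulting simple right $A$-module with $\top \pi^{-1}(I(S))$.

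First, I would verify that $\Ext_A^{\gr S}(S,A)$ is already a simple left $A$-module, so the outer $\top$ in the grade bijection becomes redundant. When $I(S)$ is not projective, this is immediate from Theorem \ref{conditionGtheorem} together with condition (G): the simple $S$ is $\gr S$-Gorenstein. When $I(S)$ is projective, $I(S)$ is a direct summand of $A$, hence $\gr S = 0$ and $\Hom_A(S, A) \cong \Hom_A(S, I(S)) \cong \End_A(S)$ is a division ring, automatically simple as a left $A$-module.

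Next, set $P := \pi^{-1}(I(S))$ and write $P = eA$ for a primitive idempotent $e$. Using the right $A$-module decomposition $A = eA \oplus (1-e)A$, the left $A$-action of $e$ on $\Ext_A^{\gr S}(S,A)$ is precisely the projection onto the summand $\Ext_A^{\gr S}(S, eA)$. This summand is non-zero: when $I(S)$ is not projective, Lemma \ref{from P to S} asserts that $P = eA$ is the unique indecomposable projective on which $\Ext_A^{\gr S}(S, -)$ is non-zero; when $I(S)$ is projective, $eA = I(S)$ gives $\Hom_A(S, eA) \cong \End_A(S) \neq 0$ directly. Consequently $\Ext_A^{\gr S}(S,A)$ is the simple left $A$-module $Ae/(\rad A) e$, and its $K$-dual is $D\Ext_A^{\gr S}(S,A) \cong S_e = \top(eA) = \top P = \top \pi^{-1}(I(S))$, as claimed.

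The main bookkeeping point I expect to need care with is the identification $e \cdot \Ext_A^{\gr S}(S,A) = \Ext_A^{\gr S}(S, eA)$: for any projective resolution $P_\bullet \to S$, the left $e$-action on each $\Hom_A(P_i, A)$ is post-composition with the projection $A \to eA$, which commutes with the differentials (themselves induced by right $A$-module maps between the $P_i$) and hence passes to cohomology. Once this compatibility is confirmed, the remainder of the argument is a direct consequence of condition (G), Theorem \ref{conditionGtheorem}, and the bijection $\pi$ supplied by Lemma \ref{from P to S}.
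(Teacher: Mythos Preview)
Your argument for the case $I(S)\notin\proj A$ is correct and matches the paper's approach: both use Theorem~\ref{conditionGtheorem} to get condition~(G), and then the construction of $\pi=\beta\alpha^{-1}$ in Lemma~\ref{from P to S} pins down the unique indecomposable projective $P=eA$ with $\Ext_A^{\gr S}(S,P)\neq 0$.

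The gap is in the case $I(S)\in\proj A$. Your claimed isomorphism $\Hom_A(S,A)\cong\Hom_A(S,I(S))\cong\End_A(S)$ fails in general: the simple $S$ can occur in the socle of several indecomposable projectives, not only in $I(S)$. Concretely, take $A=N_{[2,3,1]}$ from Example~\ref{eg:naka1} and $S=S_3$. Here $I(S_3)=P_1$ is projective, but $S_3$ is also the socle of $P_2$, and one computes that $\Hom_A(S_3,A)$ is the length-two left $A$-module with top at $e_2$ and socle at $e_1$. So $\Hom_A(S_3,A)$ is not simple, and the step ``$e\cdot\Ext_A^{\gr S}(S,A)\neq 0$ plus simplicity forces $\Ext_A^{\gr S}(S,A)$ to be the simple at $e$'' breaks down. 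What you actually need is that the \emph{socle} of $\Hom_A(S,A)$ is the simple left module at $e$ (so that $\top D\Hom_A(S,A)=\top(eA)$); merely knowing $e$ acts nontrivially is not enough, since in the example $e_2$ also acts nontrivially. One clean fix is to identify $\Hom_A(S,A)$ with the right-annihilator of $\rad A$ inside $Af$ (where $S=\top(fA)$), and then show its left socle is $\soc_l A\cap\soc_r A\cap Af$, which one checks is supported exactly at the idempotent $e$ with $eA=I(S)$.
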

\begin{proof}
By Theorem \ref{conditionGtheorem}, $A$ satisfies the condition (G), and so the claim follows from the construction of $\pi=\beta\alpha^{-1}$ shown in Lemma \ref{from P to S}.
\end{proof}

The next proposition gives an easy construction of dominant Auslander-Gorenstein algebras from known examples by tensoring with self-injective algebras.

\begin{proposition}
Let $A$ be a dominant Auslander-Gorenstein algebra and $B$ be a self-injective algebra over a field $K$. Assume $A$ and $B$ are split algebras over the field $K$. Then the algebra $C:=A \otimes_K B$ is again a dominant Auslander-Gorenstein algebra having the same self-injective and dominant dimension.
\end{proposition}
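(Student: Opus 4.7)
The plan is to leverage the splitness hypothesis to reduce every homological invariant of a $C$-module of the form $P\otimes_K Q$ to the corresponding invariants of $P$ over $A$ and $Q$ over $B$ via a Künneth-type formula, and then use the self-injectivity of $B$ to kill the $B$-contribution.

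First I would record the structural consequences of splitness: indecomposable projective (respectively, injective) $C$-modules are precisely those of the form $P\otimes_K Q$ with $P$ indecomposable projective (resp.\ injective) over $A$ and $Q$ indecomposable projective (resp.\ injective) over $B$. Since $B$ is self-injective, indecomposable projective and indecomposable injective $B$-modules coincide, so each such $Q$ is simultaneously projective and injective. Moreover, since $K$ is a field, the tensor product over $K$ of a projective resolution of an $A$-module $X$ with one of a $B$-module $Y$ is a projective resolution of $X\otimes_K Y$ over $C$, which yields the Künneth isomorphism
\[
\Ext^n_C(X\otimes_K Y,\,X'\otimes_K Y')\cong\bigoplus_{i+j=n}\Ext^i_A(X,X')\otimes_K\Ext^j_B(Y,Y').
\]

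Next, I would fix an indecomposable projective $P\otimes_K Q$ of $C$ and analyse its minimal injective coresolution. Starting from a minimal injective coresolution $0\to P\to I^0\to I^1\to\cdots\to I^d\to 0$ of $P$ over $A$ with $d=\idim_A P$, tensoring over $K$ with $Q$ gives an exact sequence
\[
0\to P\otimes_K Q\to I^0\otimes_K Q\to\cdots\to I^d\otimes_K Q\to 0
\]
of $C$-modules in which every term is injective over $C$ (as $Q$ is injective over $B$). Minimality is preserved because under splitness $\rad(I^k\otimes_K Q)=\rad(I^k)\otimes_K Q+I^k\otimes_K\rad(Q)$. The key observation is that $I^k\otimes_K Q$ is projective over $C$ if and only if $I^k$ is projective over $A$: this uses that $Q$ is already projective over $B$ and that indecomposable projectives of $C$ are precisely tensor products of indecomposable projectives of $A$ and $B$. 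Consequently
\[
\idim_C(P\otimes_K Q)=\idim_A P\qquad\text{and}\qquad\domdim_C(P\otimes_K Q)=\domdim_A P.
\]

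From here the conclusion is immediate. Since $A$ is dominant Auslander-Gorenstein, $\idim_A P\leq\domdim_A P$ for every indecomposable projective $A$-module $P$, and the displayed equalities transport this inequality to each indecomposable projective of $C$. Taking maxima and minima over $P$ (and noting $\idim_B Q=0$) gives $\idim C=\idim A<\infty$ and $\domdim C=\domdim A$, so $C$ is Iwanaga-Gorenstein of the claimed self-injective and dominant dimensions. I do not anticipate a serious obstacle; the only point that requires any care is the preservation of minimality of the injective coresolution after tensoring, and this is precisely what the split hypothesis guarantees.
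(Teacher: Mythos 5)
Your proof is correct and follows essentially the same route as the paper: both arguments reduce to showing that for an indecomposable projective $P\otimes_K Q$ of $C$ one has $\idim_C(P\otimes_K Q)=\idim_A P$ and $\domdim_C(P\otimes_K Q)=\domdim_A P$, the only difference being that the paper cites these two formulas (from Eilenberg--Rosenberg--Zelinsky and M\"uller) while you re-derive them from the tensored minimal injective coresolution. The one small slip is that minimality of an injective coresolution is governed by socles rather than radicals, but the analogous identity $\soc(X\otimes_K Y)=\soc(X)\otimes_K\soc(Y)$ holds for split algebras, so your argument goes through.
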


\begin{proof}
Since $A$ and $B$ are split over $K$, every indecomposable projective non-injective $C$-module is isomorphic to a module of the form $P \otimes_K Q$ where $P$ is an indecomposable projective non-injective $A$-module and $Q$ is an indecomposable projective $B$-module. By \cite[Proposition 10]{ERZ} we have in this case $\idim(P \otimes_K Q)=\idim P+\idim Q=\idim P$, because $B$ is self-injective and thus $Q$ projective-injective.
By an argument as in \cite[Lemma 6]{Mue} we have in general $\domdim P \otimes_K Q= \min\{\domdim P,\domdim Q\}$. Since $B$ is self-injective, $\domdim Q= \infty$ and thus $\domdim P \otimes_K Q =\domdim P=\idim P=\idim P \otimes_K Q $, which shows that $C$ is dominant Auslander-Gorenstein.
\end{proof}

We remark that in general the tensor product for two dominant Auslander-Gorenstein algebras is not dominant Auslander-Gorenstein as easy examples such as $KA_2\otimes_K KA_2$ show, where $KA_2$ is the path algebra of the $A_2$-quiver.

\section{Mixed precluster tilting modules and dominant Auslander-Gorenstein algebras }
\label{sec:MPCT}

\subsection{Dominant Auslander-Solberg correspondence}
We generalise the correspondence between minimal Auslander-Gorenstein algebras and precluster tilting modules to the dominant setting.

\begin{definition-proposition}\label{mixed 0}
Let $B$ be a finite dimensional algebra. We say that $M\in\mod B$ is \emph{mixed precluster tilting} if $B\oplus DB\in\add M$ and the following equivalent conditions are satisfied.
\begin{enumerate}[\rm(i)]
\item For each $X\in\ind(\add M)\setminus\ind(\proj B)$, there exists $\ell_X\ge1$ such that $\Ext^i_B(X,M)=0$ for all $1\le i<\ell_X$ and $\tau_{\ell_X}(X)\in\add M$.
\item For each $Y\in\ind(\add M)\setminus\ind(\inj B)$, there exists $r_Y\ge1$ such that $\Ext^i_B(M,Y)=0$ for all $1\le i<r_Y$ and $\tau_{r_Y}^-(Y)\in\add M$.
\end{enumerate}
Moreover, we have $r_Y=\ell_X$ for $Y=\tau_{\ell_X}(X)$.
\end{definition-proposition}

\begin{proof}
We only show (i)$\Rightarrow$(ii); the other direction can be shown dually.

For each $\ell\ge 1$, we have an equivalence $\tau_\ell:\underline{{}^{\perp_{\ell-1}}A}\simeq\overline{DA^{\perp_{\ell-1}}}$ from \eqref{eq:stable equiv perpA DAperp}.  Hence, the map $\ind(\add M)\setminus\ind(\proj B)\simeq\ind(\add M)\setminus\ind(\inj B)$ given by $X\mapsto\tau_{\ell_X}(X)$ is injective.  This has to be bijective since the cardinalities of the domain and the target are the same.
This implies that, for each $Y\in\ind(\add M)\setminus\ind(\inj B)$, there exists $X\in\ind(\add M)\setminus\ind(\inj B)$ such that $Y=\tau_{\ell_X}(X)$.  Using \eqref{eq:stable equiv perpA DAperp} again, we have $\tau^-_{\ell_X}(Y)\simeq X\in\add M$, which then yields
\[\Ext^i_B(M,Y)\simeq D\Ext_B^{\ell_X-i}(X,M)=0\]
for each $1\le i<\ell_X$ by higher AR duality \eqref{eq:AR dual}.  Thus, $r_Y:=\ell_X$ satisfies the desired conditions.
\end{proof}

Now we prove the following generalisation of the Auslander-Solberg correspondence \cite{AS,IyaSol}.

\begin{theorem}\label{main correspondence 0}
There exists a bijection between the following objects.
\begin{enumerate}[\rm(1)]
\item The Morita equivalence classes of dominant Auslander-Gorenstein algebras $A$ with $\domdim A\ge 2$.
\item The Morita equivalence classes of pairs $(B,M)$ of finite dimensional algebras $B$ and mixed precluster tilting $B$-modules $M$.
\end{enumerate}
The correspondence from {\rm (2)} to {\rm (1)} is given by $(B,M)\mapsto A:=\End_B(M)$.
\end{theorem}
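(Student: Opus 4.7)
The plan is to show that the Morita--Tachikawa correspondence restricts to the asserted bijection. By Proposition \ref{APTpropo}, Morita equivalence classes of algebras $A$ with $\domdim A\ge 2$ correspond to pairs $(B, M)$ with $M$ a generator-cogenerator of $\mod B$, via $(B,M)\mapsto A:=\End_B(M)$. Under this correspondence, each indecomposable summand $X$ of $M$ corresponds to an indecomposable projective $A$-module $P_X := \Hom_B(M, X)$, with $P_X$ injective if and only if $X\in\inj B$. Thus it suffices to show that $A$ is dominant Auslander-Gorenstein if and only if $M$ is mixed precluster tilting. I would work primarily with condition (ii) in Definition-Proposition \ref{mixed 0}, as it aligns naturally with injective coresolutions on the $A$-side.

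For the direction (1)$\Rightarrow$(2), assume $A$ is dominant Auslander-Gorenstein. For every indecomposable non-injective projective $P_X$, Proposition \ref{strongAGpropo}(1) gives a minimal injective coresolution
\[
0 \to P_X \to T^0 \to T^1 \to \cdots \to T^{d_X - 1} \to I \to 0,
\]
where $d_X := \idim P_X = \domdim P_X$, each $T^k$ is projective-injective, and $I$ is indecomposable injective. Proposition \ref{APTpropo}(3) translates $\domdim P_X = d_X$ into $\Ext^i_B(M, X) = 0$ for $1\le i < d_X$, which establishes the vanishing half of Definition-Proposition \ref{mixed 0}(ii). Applying the exact functor $-\otimes_A Af$ yields an exact sequence in $\mod B$
\[
0\to X \to T^0 f \to \cdots \to T^{d_X-1}f \to If \to 0,
\]
whose truncation $0\to X \to T^0 f \to \cdots \to T^{d_X-1}f$ forms the first $d_X$ steps of a minimal injective coresolution of $X$ in $\mod B$ (since each $T^k f\in\inj B$ by Proposition \ref{APTpropo}(1), and the Morita--Tachikawa equivalence preserves this initial segment). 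The Auslander--Reiten formula for $B$ then identifies $If$ with $\tau_{d_X}^- X$ modulo injective summands, and since $I\in\inj A$ with $If\in\add M$, one concludes $\tau_{d_X}^- X\in\add M$.

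For the converse (2)$\Rightarrow$(1), assume $M$ is mixed precluster tilting. For each indecomposable non-injective $X\in\add M$, Proposition \ref{APTpropo}(3) together with the vanishing $\Ext^i_B(M,X)=0$ for $1\le i<d_X$ gives $\domdim P_X\ge d_X$. Using $\tau_{d_X}^- X\in\add M$ and the Auslander--Reiten formula in reverse, I would construct the tail of a minimal injective coresolution of $P_X$: splicing the image under $\Hom_B(M,-)$ of the injective coresolution of $X$ in $\mod B$ (whose first $d_X$ terms lie in $\inj B\subseteq\add M$, hence give projective-injective $A$-modules) with a single further indecomposable injective $A$-module corresponding to $\tau_{d_X}^- X$ produces a coresolution of the form in Proposition \ref{strongAGpropo}(1). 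Hence $\idim P_X\le d_X=\domdim P_X$. Taking the maximum over all indecomposable summands of $M$ bounds $\idim A$, so $A$ is Iwanaga-Gorenstein and therefore dominant Auslander-Gorenstein.

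The main technical obstacle is the precise identification, via the Auslander--Reiten formula, of the terminal indecomposable injective $I$ in the minimal coresolution of $P_X$ with a module corresponding to $\tau_{d_X}^- X$ on the $B$-side. This requires careful tracking of how the exact functor $-\otimes_A Af$ transfers dominant-dimension information between $\mod A$ and $\mod B$. Finally, the equivalence of conditions (i) and (ii) in Definition-Proposition \ref{mixed 0} ensures that the assignment $X\mapsto \tau_{d_X}^- X$ on non-injective indecomposable summands of $M$ is compatible with the bijection $\pi$ of Proposition \ref{strongAGpropo}(2), in particular yielding $c_X = d_Y$ for $Y=\tau_{c_X}(X)$.
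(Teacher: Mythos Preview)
Your overall strategy matches the paper's: reduce to Morita--Tachikawa and then check that the dominant Auslander--Gorenstein condition on $A$ corresponds to the mixed precluster tilting condition on $M$, working with condition (ii) of Definition-Proposition \ref{mixed 0} and the minimal injective coresolution from Proposition \ref{strongAGpropo}(1). The paper likewise splits the proof into the two directions (Propositions \ref{B to A 0} and \ref{A to B 0}) and passes through the same exact sequences you describe.

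The gap is precisely at the step you flag as the ``main technical obstacle'', and your proposed resolution is not quite right. In direction (1)$\Rightarrow$(2), after tensoring with $Af$ you obtain $If=D(Ag')f=D(fAg')$ as a $B$-module, but there is no reason this lies in $\add M=\add Af$: the equivalence $\Dom_2(A)\simeq\mod B$ sends $\proj A$ to $\add M$, and $I$ is \emph{not} projective. Nor does the Auslander--Reiten formula in the usual sense identify $If$ with $\tau_{d_X}^-(X)$; the latter is computed from an injective \emph{copresentation} of $\Omega^{-(d_X-1)}(X)$ via $\nu^{-1}$, not as a bare cokernel. The paper bypasses $If$ entirely: it applies $\Hom_B(DB,-)$ to the $B$-sequence and $\Hom_A(D(Af),-)$ to the $A$-sequence, and compares them through the Nakayama-functor identification $\Hom_B(DB,D(Af)f)\cong\Hom_A(D(Af),D(Af))$ to read off directly that $\tau_{d_X}^-(X)\cong g'Af\in\add M$, where $I=D(Ag')$. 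The same commutative-diagram device, run in the other direction in Proposition \ref{B to A 0}, shows that the cokernel of the map $\Hom_B(M,I^{d_X-1})\to\Hom_B(M,I^{d_X})$ is $D\Hom_B(\tau_{d_X}^-(X),M)$, which is an injective $A$-module because $\tau_{d_X}^-(X)\in\add M$; this is the precise form of the ``splicing'' you allude to in direction (2)$\Rightarrow$(1).
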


We will prove the theorem in two parts: Propositions \ref{B to A 0} and \ref{A to B 0}, but first, we need the following.

\begin{lemma}\label{lem:Hom(M,inj cores)}
Let $B$ be a finite dimensional algebra and $M\in \mod B$ a generator-cogenerator.
If $X\in \add M$ is indecomposable non-injective whose minimal injective coresolution starts with
\begin{equation}\label{resolution of X 0}
0\to X\to I^0 \to I^1 \to \cdots \to I^{d-1}\xrightarrow{a} I^{d},
\end{equation} 
and $X\in M^{\perp d-1}$,
then we have an exact sequence of $A$-modules
\begin{equation}\label{eq:Hom(M,res of X)}
0\to {}_B(M,X) \to {}_B(M,I^0) \to \cdots \to {}_B(M,I^{d-1}) \xrightarrow{(M,a)} {}_B(M,I^d) \to D\Hom_B(\tau_d^-(X),M)\to 0,
\end{equation}
where ${}_B(M,-):=\Hom_B(M,-)$.
\end{lemma}

\begin{proof}
Applying $\Hom_B(M,-)$ and using $\Ext^i_B(M,X)=0$ for all $1\le i<d$, we obtain an exact sequence of the form \eqref{eq:Hom(M,res of X)} except the last term is just $\Cokernel(M,a)$ instead of $D\Hom_B(\tau_d^-(X),M)$.
The functorial isomorphism $\Hom(M,-)\simeq D\Hom_B(\nu^{-1}(-),M)$ on $\inj B$ yields the following commutative diagram 
\[\xymatrix@R1em@C6.5em{
\Hom_B(M,I^{r_X-1})\ar@{=}[d]\ar[r]^{(M,a)}&\Hom_B(M,I^{d})\ar@{=}[d]\\
D\Hom_B(\nu^{-1}(I^{d-1}),M)\ar[r]^{D\Hom_B(\nu^{-1}(a),M)}&D\Hom_B(\nu^{-1}(I^{d}),M),}\]
which implies that $\Cokernel(M,a)=\Cokernel D\Hom_B(\nu^{-1}(a),M)$.

By \eqref{eq:tau=derived nu}, applying $\nu^{-1}$ to the sequence \eqref{resolution of X 0} yields an exact sequence
\[
\nu^{-1}(I^{d-1})\xrightarrow{\nu^{-1}(a)}\nu^{-1}(I^{d})\to\tau_{d}^-(X)\to0.
\]
Applying $D\Hom_B(-,M)$ to this, we obtain an exact sequence
\[D\Hom_B(\nu^{-1}(I^{r_X-1}),M)\xrightarrow{D(\nu^{-1}(a),M)} D\Hom_B(\nu^{-1}(I^{r_X}),M)\to D\Hom_B(\tau_{r_X}^-(X),M)\to0.\]
Hence, we have $D\Hom_B(\tau_{r_X}^-(X),M)=\Cokernel D\Hom_B(\nu^{-1}(a),M) = \Cokernel (M,a)$, as required.
\end{proof}

\begin{proposition}\label{B to A 0}
Let $B$ be a finite dimensional algebra.
If $M$ is a mixed precluster tilting $B$-module, then $A=\End_B(M)$ is a dominant Auslander-Gorenstein algebra with $\domdim A \ge2$.
Moreover, we have 
\[
\idim \Hom_B(M,X)=r_X+1 \text{ and }\pdim D\Hom_B(X,M) = \ell_X+1
\]
for each non-injective indecomposable $X\in \add M$; whereas $\Hom_B(M,X)\simeq D\Hom_B(\nu^{-1}(X),M)$ is projective-injective for each injective indecomposable $X\in\inj A$.
\end{proposition}

\begin{proof}
Let $A=\End_B(M)$. Fix an indecomposable projective $A$-module $P$. Then we have $P=\Hom_B(M,X)$ for some indecomposable $X\in\add M$.

(i) Assume $X\in\inj B$. Then $P=\Hom_B(M,X)\simeq D\Hom_B(\nu^{-1}(X),M)$ is an injective $A$-module.  Hence, we have $0=\idim P \le \domdim P=\infty$.

(ii) Assume $X\notin\inj B$ and consider the indecomposable non-injective $P:=\Hom_B(M,X)\in \proj A$. 
By Lemma \ref{lem:Hom(M,inj cores)}, we have an exact sequence of $A$-modules
\[0\to P\to\Hom_B(M,I^0)\to\cdots\to\Hom_B(M,I^{r_X-1})\xrightarrow{(M,a)}\Hom_B(M,I^{r_X})\to D\Hom_B(\tau_{r_X}^-(X),M)\to0,
\]
where $0\to X\to I^0\to \cdots \to I^{r_X}$ is the first $r_X+1$ terms of the minimal injective coresolution of $X$.
The terms $\Hom_B(M,I^i)\in \proj A\cap \inj A$ for all $0\leq i\leq r_X$ as $I^i\in \inj B$.
Since $Y:=\tau_{r_X}^-(X)\in \add(M)$ by our setup, the last term in the above sequence is an injective $A$-module.
In particular, we have $\domdim P = \idim P = r_X+1$ and $\pdim I =\ell_Y+1$ for the non-projective injective $I=D\Hom_B(Y,M)$.  This implies the dominant Auslander-Gorenstein property.
\end{proof}

The converse map of Theorem \ref{main correspondence 0} is given by the following observation.

\begin{proposition}\label{A to B 0}
Let $A$ be a dominant Auslander-Gorenstein algebra with $\domdim A\ge2$. 
Let $B=fAf$ be the base algebra associated to the idempotent $f\in A$ with $\proj A\cap\inj A=\add D(Af)$.
Then $M:=Af$ is a mixed precluster tilting $B$-module and satisfies $\End_B(M)=A$.
\end{proposition}


\begin{proof}
Since $\domdim A\ge 2$, it follows from Proposition \ref{prop:DCP}(1)(2) that $M=Af$ is a generator-cogenerator of $B$ satisfying $A\simeq \End_B(Af)$.  It remains to show that $M$ is a mixed precluster tilting $B$-module.

Fix $X\in\ind(\add M)\setminus\ind(\inj B)$.  
Since $M=Af$, we have some indecomposable projective $P\in \proj A$ so that $X=Pf=\Hom_A(fA,P)$.
Being dominant Auslander-Gorenstein with $\domdim A\geq 2$, we have a minimal injective coresolution of $P$ of the form
\begin{equation}\label{eq:proj res P in proj A}
0\to P\to Q^0 \to Q^1\to \cdots \to Q^d \to Q^{d+1}\to 0
\end{equation}
for some $d\ge 1$ and $Q^i$ projective-injective for all $0\le i\le d$, and $Q^{d+1}$ is an indecomposable non-projective injective $A$-module.
Our aim is to show that $X\in M^{\perp d-1}$ and $\tau_d^-(X)\in \add M$, i.e.\ $r_X:=d$ satisfies the condition (i) in Definition-Proposition \ref{mixed 0}.

Since $\domdim P=d+1$, it follows from Proposition \ref{APTpropo}(3) that $\Ext_B^i(M,X)=0$ for all $1\leq i<d$.
By Lemma \ref{lem:Hom(M,inj cores)}, the first $d+1$ terms of the minimal injective coresolution \eqref{resolution of X 0} of $X$ induces an exact sequence of $A$-modules of the form \eqref{eq:Hom(M,res of X)}.
By minimality, the first $d+1$ terms
\[0\to P\to \Hom_B(M,I^0) \to \cdots \to \Hom_B(M,I^d)\]
coincides with those in \eqref{eq:proj res P in proj A} up to $Q^d$. 
By exactness, the sequence \eqref{eq:Hom(M,res of X)} is the same as \eqref{eq:proj res P in proj A}.
In particular, the last term $D\Hom_B(\tau_d^-(X),M)$ is an injective $A$-module $Q^{d+1}$, and hence $\tau_d^-(X)\in\add M$ holds by Proposition \ref{prop:DCP}(3).
\end{proof}

\subsection{Mixed cluster tilting modules and Dominant Auslander correspondence}

In this subsection, we give a description of the category $\ZZ(M)$ when $A$ is a dominant Auslander-Gorenstein algebra with dominant dimension at least two that generalises the result from \cite{IyaSol} where such a description was given for minimal Auslander-Gorenstein algebras. We apply this description to give characterisation when dominant Auslander-Gorenstein algebras with dominant dimension at least two have finite global dimension and use this to generalise the classical higher Auslander correspondence.

\begin{definition-proposition} \label{ZMdefinition}
Let $B$ be an algebra, and $M$ a mixed precluster tilting $B$-module. We consider a full subcategory
\[\ZZ(M):=\bigcap\limits_{X\in\ind(\add M)}X^{\perp{\ell_X-1}}=\bigcap\limits_{X\in\ind(\add M)}{^{\perp{r_X-1}}X}.\]
We call $M$ a \emph{mixed cluster tilting} $B$-module if $\ZZ(M)=\add M$.
\end{definition-proposition}

The equality 
$$\bigcap\limits_{X\in\ind(\add M)}X^{\perp{\ell_X-1}}=\bigcap\limits_{X\in\ind(\add M)}{^{\perp{r_X-1}}X}$$
for mixed precluster tilting modules $M$ follows as in Proposition \ref{mixed 0} and its proof.
As in the case of precluster tilting modules, we have the following equivalence.


\begin{theorem}\label{thm-CM}
Let $A$ be a dominant Auslander-Gorenstein algebra with dominant dimension at least two corresponding to $(B,M)$.
Then we have a commutative diagram of equivalences of categories:
\[
\xymatrix@C=12em@R=2em{ \ZZ(M)\ar@{=}[d] \ar@<.5ex>[r]^{\Hom_B(M,-)}&\CM A \ar@<.5ex>[d]^{\Hom_A(-,A)}\ar@<.5ex>[l]^{-\otimes_A M}\\ 
\ZZ(M) \ar@<.5ex>[r]^{\Hom_B(-,M)}&(\CM A^{\op})^{\op}.\ar@<.5ex>[u]^{\Hom_{A^{\op}}(-,A)}\ar@<.5ex>[l]^{\Hom_{A^{\op}}(-,M)} }
\]
\end{theorem}

\begin{proof}
For an Iwanaga-Gorenstein algebra $A$ with $\domdim A\ge 2$ and self-injective dimension $g$, we have $\CM A=\Omega^g(\mod A) \subseteq \Omega^2(\mod A)=\Dom_2(A)$, where the first equality is well-known (e.g.\ \cite{Che}) and the last equality follows from Lemma \ref{marvillemma}. Hence, every maximal Cohen-Macaulay $A$-module has dominant dimension at least $2$.  By restricting the equivalence $-\otimes_A Af:\Dom_2(A)\simeq \mod fAf$ from Proposition \ref{APTpropo} (1), we obtain an equivalence of categories $\CM A\simeq \CC:=F(\CM A)=\{Uf\mid U\in\CM A\}$ where $F:=-\otimes_A Af$.
We want to show that $\CC=\ZZ(M)$ when $A$ is dominant Auslander-Gorenstein with corresponding mixed precluster tilting pair $(B,M)$.

Let $P_X:=\Hom_B(M,X)$ be the projective $A$-module corresponding to an indecomposable $X\in \add M$.
Recall from Proposition \ref{prop:DCP}(3) that the projective non-injective $P_X$ are given by $X\in \add M\setminus \inj B$.
Let us now show that $\CC\subset \ZZ(M)$.
Recall that $\CM A := {}^{\perp g}A$, so $U\in {}^{\perp g}A$ if and only if $U\in {}^{\perp \idim P_X}P_X$ for every projective non-injective modules $P_X=\Hom_B(M,X)$.  
By Proposition \ref{A to B 0}, we have $\idim P_X = r_X+1$ is the number associated to $X$ in the definition of mixed precluster tilting.
Since $\domdim P_X = \idim P_X = r_X+1$, it follows from Proposition \ref{APTpropo} (2) that $\Ext_A^{r}(U,P_X) \simeq \Ext_{fAf}^r(Uf,P_X f) \simeq \Ext_{fAf}^r(Uf,X)$ for all $1\leq r< r_X$. 
Thus $U \in \CM A$ implies that $Uf \in \bigcap_{X\in\ind(\add M)}{^{\perp r_X-1}X}=\ZZ(M)$. 

Let $U:=\Hom_B(M,Z)\in \mod A$ for $Z\in \ZZ(M)$.
We want to show that $U\in \CM A = \bigcap_{X\in \ind(\add M)}{}^{\perp >0} P_X$.
It is clear that $U\in {}^{\perp >0} P_X$ for projective-injective $P_X$ (equivalently, $X\in \inj B$); so it remains to consider the case $X\in \add M\setminus \inj B$.
Since $Z=Uf$, using Proposition \ref{A to B 0} and Proposition \ref{APTpropo} (2) again, we get that 
$\Ext_A^i(U,P_X)\simeq \Ext_B^i(Z,X)=0$ for all $1\leq i<r_X$, i.e.
$U\in \bigcap_{X\in\ind(\add M)}{}^{\perp r_X-1}P_X$. 
Since $U \in \Dom_2(A)\subset\Omega^2(\mod A)$, we can write $U=\Omega^2(U')$ for some $A$-module $U'$.
Then, for $t \geq r_X$, we have $\Ext_A^t(U,P_X)=\Ext_A^t(\Omega^2(U'),P_X)=\Ext_A^{t+2}(U',P_X)=0$ since $\idim P_X=r_X+1 \leq t+2$. 
Thus, we have $U\in {}^{\perp >0} P_X$ for all $P_X\in \ind(\proj A)$, which means that $U \in \CM A$.

To see the commutativity of the diagram, first note that $\Hom_A(-,A)$ gives an equivalence between $\CM A$ and $(\CM A^{\op})^{\op}$, see for example corollary 2.16 in \cite{Che}.
We only prove that $-\otimes_AM:\CM A\to\ZZ(M)$ is an equivalence. Then its right adjoint functor $\Hom_B(M,-):\ZZ(M)\to\CM A$ gives its quasi-inverse.
Since $M$ is a projective $A^\op$-module, we have an isomorphism of functors
\[\Hom_{A^\op}(-,M)\simeq\Hom_{A^\op}(-,A)\otimes_AM=(-\otimes_AM)\circ\Hom_{A^\op}(-,A).\]
Since $M$ is a generator, $\Hom_A(M,-):\ZZ(M)\to\CM A$ is fully faithful and we have an isomorphism of functors
\begin{align*}
\Hom_B(-,M)\simeq\Hom_A(\Hom_B(M,-),\Hom_B(M,M))=\Hom_A(-,A)\circ\Hom_B(M,-).
\end{align*}
Thus the commutativity of the diagram follows. In particular, the lower horizontal functors are equivalences, and all claims follow.
\end{proof}

We obtain the next result immediately.

\begin{corollary}\label{thm-CM2}
Let $A$ be a dominant Auslander-Gorenstein algebra with dominant dimension at least two associated to $(B,M)$. Then the following are equivalent:
\begin{enumerate}[\rm(1)]
\item $\gldim A < \infty$.
\item $\CM A=\proj A$.
\item $M$ is a mixed cluster tilting $B$-module.
\end{enumerate}
In such a case, $\gldim A$ coincides with the self-injective dimension of $A$.
\end{corollary}
\begin{proof}
The equivalence of (2) and (3) follows directly from the equivalence $\proj A\simeq \add M$ and Theorem \ref{thm-CM} which asserts that $\CM A\simeq \ZZ(M)$.
The equivalence of (1) and (2) follows immediately from the fact that an Iwanga-Gorenstein algebra has finite global dimension if and only if $\CM A=\proj A$ and in this case the global dimension equals the self-injective dimension of $A$.
\end{proof}

As a consequence of Theorem \ref{main correspondence 0} and Corollary \ref{thm-CM2}, we obtain a generalisation of the classical higher Auslander correspondence. Due to its importance we formulate it as a theorem.

\begin{theorem}\label{main correspondence 0 2}
There exists a bijection between the following objects.
\begin{enumerate}[\rm(1)]
\item The Morita equivalence classes of dominant Auslander-regular algebras $A$ with $\domdim A\ge 2$.
\item The Morita equivalence classes of pairs $(B,M)$ of finite dimensional algebras $B$ and mixed cluster tilting $B$-modules $M$.
\end{enumerate}
The correspondence from (2) to (1) is given by $(B,M)\mapsto A:=\End_B(M)$.
\end{theorem}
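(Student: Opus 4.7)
The plan is to deduce Theorem \ref{main correspondence 0 2} as a restriction of the correspondence established in Theorem \ref{main correspondence 0}, using Corollary \ref{thm-CM2} as the bridge that identifies exactly which pairs $(B,M)$ correspond to algebras $A$ of finite global dimension.

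First I would recall that Theorem \ref{main correspondence 0} already gives a bijection $(B,M) \mapsto A := \End_B(M)$ between Morita classes of pairs with $M$ mixed precluster tilting and Morita classes of dominant Auslander-Gorenstein algebras $A$ with $\domdim A \geq 2$. Thus, to obtain the desired correspondence, it suffices to check that under this bijection, the condition ``$A$ is dominant Auslander-regular'' on the algebra side matches the condition ``$M$ is mixed cluster tilting'' on the module side.

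Next, I would invoke Corollary \ref{thm-CM2}: given a dominant Auslander-Gorenstein algebra $A$ with $\domdim A \geq 2$ corresponding to a pair $(B,M)$, the conditions $\gldim A < \infty$, $\CM A = \proj A$, and ``$M$ is mixed cluster tilting'' (i.e.\ $\mathcal{Z}(M) = \add M$) are all equivalent. Since being dominant Auslander-regular is by definition the conjunction of being dominant Auslander-Gorenstein with $\gldim A < \infty$, the equivalence $\gldim A < \infty \iff M \text{ is mixed cluster tilting}$ is exactly what I need. Consequently, the bijection of Theorem \ref{main correspondence 0} restricts to a bijection between the objects listed in (1) and (2) of the statement, with the same assignment $(B,M) \mapsto \End_B(M)$.

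There is essentially no main obstacle here, as the heavy lifting has already been done: Theorem \ref{main correspondence 0} supplies the bijection, and Theorem \ref{thm-CM} together with its Corollary \ref{thm-CM2} supplies the characterization of finite global dimension in terms of the equality $\mathcal{Z}(M) = \add M$. The only small point to verify is that the definition of mixed cluster tilting (Definition-Proposition \ref{ZMdefinition}) is phrased so that ``mixed cluster tilting $\Rightarrow$ mixed precluster tilting,'' which holds by the very definition (a mixed cluster tilting module is, by requirement, a mixed precluster tilting module satisfying the additional equality $\mathcal{Z}(M) = \add M$). The proof therefore consists of a one-line reduction to the two cited results.
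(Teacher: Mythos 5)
Your proposal is correct and follows exactly the paper's own route: the theorem is stated there as an immediate consequence of Theorem \ref{main correspondence 0} combined with Corollary \ref{thm-CM2}, which is precisely the restriction argument you describe.
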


The following is a well-known useful lemma.

\begin{lemma}\label{lem:addM resol to proj res} {\rm \cite{A}\cite[Lemma 2.1]{EHIS}}
Let $B$ be an algebra with a generator-cogenerator $M$ such that $\gldim \End_B(M)\leq n$ for some $n\geq 2$.
Then, for every $B$-module $Y$, there exists a right $(\add M)$-resolution
$$0 \rightarrow M_{n-2} \rightarrow \cdots \rightarrow M_1 \rightarrow M_0 \rightarrow Y \rightarrow 0,$$
that is, an exact sequence with all $M_i$'s in $\add M$, such that $\Hom_B(M,-)$ is exact on this sequence.
\end{lemma}

Next, we show a generalisation of the main result in \cite{EH} which relates the existence of mixed cluster tilting modules with the complexity of a self-injective algebra.  This also gives a shorter proof for the main result of \cite{EH}.
Recall the notion of complexity of modules and algebras from \eqref{eq:cx(M)}.

\begin{theorem} \label{ErdmannHolmgeneralisation}
Let $B$ be a self-injective algebra and $M$ a mixed precluster tilting module in $\mod B$.
\begin{enumerate}[\rm(1)]
\item The complexity of $M$ is at most $1$, that is, the $K$-dimension of the terms in the minimal projective resolution of $M$ is bounded.
\item If $M$ is, moreover, a mixed cluster tilting module, then the complexity of $B$ is at most $1$, that is, the complexity of every $B$-module is bounded by $1$.
\end{enumerate}
\end{theorem}
\begin{proof}
(1) Fix any $X\in\ind(\add M)\setminus\ind(\proj B)$.
Since $B$ is self-injective, we have $\tau=\nu \Omega^{2}$ and hence $ \nu\Omega^{\ell_X +1}(X)=\tau_{\ell_X}(X)$, which is in $\ind(\add M)\setminus\ind(\proj B)$ by our assumption on $M$. 
Repeating this process, and using the assumption that $\ind(\add M)$ is a finite set, we obtain that $\nu^j\Omega^{i_X}(X)\simeq X$ for some $j, i_X>0$.  Since $\nu$ preserves the vector space dimension, we have $\dim_K \Omega^{i_X}(X)=\dim_K X$ and so the $K$-dimension of the minimal projective resolution of $X$ is bounded by those from the first ${i_X}+1$ terms.  Thus, the $K$-dimension of terms in the minimal projective resolution of $M$ is bounded by the first $i$ terms for $i=\max\{i_X+1\mid X\in \ind(\add M)\setminus \ind(\proj B)\}$.

(2) By Lemma \ref{lem:addM resol to proj res}, every $B$-module $N$ admits a right $(\add M)$-resolution 
$$0 \rightarrow M_n \rightarrow M_{n-1} \rightarrow \cdots \rightarrow M_1 \rightarrow M_0 \rightarrow N \rightarrow 0.$$
Since the subcategory of $B$-modules of complexity at most $1$ is closed under cokernels of monomorphisms, the above sequence and (1) imply that $N$ has complexity at most $1$. 
\end{proof}

We remark that the theorem can fail if $B$ is not self-injective, see \cite{MV} for a counter-example.

We give an example that shows that, in contrast to the cluster tilting situation \cite[Theorem 2.3(1)]{I5}\cite[Proposition 3.8(b)]{IyaSol}, the conditions $\tau_{r_X}^-(X)\in\add M$ and $\tau_{\ell_X}(X)\in \add M$ do not follow from the rest of the conditions in the definition of a mixed cluster tilting module.

\begin{example}
Let $A$ be the connected symmetric Nakayama algebra of Loewy length $3$ with two simple modules $S$ and $T$.
Let $X$ be indecomposable $A$-module of length two with simple top $S$ and simple socle $T$.
Take $M:=A \oplus S \oplus X$, $r_{S}=1=\ell_{X}$ and $r_{X}=2=\ell_{S}$.
It is routine to check that $S \in {M}^{\perp \ell_S-1}$ and $X\in {}^{\perp r_X-1}M$, and ${}^{\perp 1} X = \add M = S^{\perp 1}$.
However, $M$ is not mixed cluster tilting; indeed, $B:=\End_A(M)$ is given by the
\[
\text{quiver }\vcenter{\xymatrix{
P_S \ar[r]^{a}  \ar[d]_{c} & P_T \ar[d]^{b} \\
S \ar[r]_{d} & X \ar[ul]_{e}
}} \;\;\text{ with relations }ab-cd, dea, ec.
\]
The indecomposable projective $B$-module corresponding to the point $P_S$ in the quiver has dominant dimension 2 and injective dimension 3.  Hence, $B$ is not a dominant Auslander-regular algebra and thus $M$ cannot be a mixed cluster tilting. Note that $\tau_2(X) = T\notin \add M$ and $\tau_2^-(S)=\rad P_S\notin \add M$.
\end{example}

\section{Mixed cluster tilting for trivial extensions of $d$-representation finite algebras} \label{section: TrivExtdrep}

Recall that an algebra $A$ is called \emph{$d$-representation-finite} if $A$ has global dimension at most $d$ and a $d$-cluster tilting module. 
The \emph{trivial extension} $T(A)$ of a finite dimensional $K$-algebra $A$ is defined as the algebra with vector space structure $A \oplus DA$ and multiplication $(a,f)(b,g)=(ab,ag+fb)$, where $DA=\Hom_K(A,K)$.
The trivial extension algebra $T(A)$ is a symmetric algebra, that is $A \simeq DA$ as $A$-bimodules. 
We will often use in this section that, for self-injective algebras $A$, we have $\Ext_A^i(M,N) \simeq \underline{\Hom}_A(\Omega_A^i(M),N)$ for all $i\ge 0$ and $\tau_d(M) \simeq \nu \Omega^{d+1}(M)$ for all $d\ge 1$.
Note that an algebra is symmetric exactly when the Nakayama functor is isomorphic to the identity functor.
We refer for example to \cite{SkoYam} for more information on trivial extension algebras and symmetric algebras.

Another fact we will use about $T(A)$ is that $T(A)$ can be $\Z$-graded with $\deg A=0$ and $\deg DA=1$.  Moreover, we have an equivalence of triangulated categories $\underline{\mod}^{\Z} T(A)\simeq \Db(\mod A)$ between the stable category of $\Z$-graded finitely generated $T(A)$-module and the bounded derived category of $\mod A$ \cite{Hap}.  Under this equivalence, the autoequivalence $\Omega_{T(A)}\circ(1)$ given by composing the syzygy with the grading-shift corresponds to the derived Nakayama functor $\nu_A$; see \cite[Section 2.2, 2.3]{CDIM} for the details.

The following theorem shows the existence of a canonical mixed cluster tilting module for the trivial extension algebra of a $d$-representation-finite algebra.

\begin{theorem}\label{B+N}
For $d\ge1$, let $A$ be a $d$-representation-finite algebra with basic $d$-cluster tilting $A$-module $A\oplus N$, and let $B=T(A)$ be the trivial extension of $A$.
Then the $B$-module $B \oplus N$ is a mixed cluster tilting $B$-module.
In this case, for each $X\in\ind(\add N)$, $\ell_X$ is given by
\[\ell_X=d+t_X\ \mbox{ where $t_X=\min \{ l \geq 0 \mid \nu_A^l(\tau_d^A(X))$ is not projective$\}$.}\]
\end{theorem}

We start with recalling a basic result of $d$-representation-finite algebras.

\begin{proposition}\cite[Lemma 2.3]{I7}\label{basic of d-CT}
Let $A$ be a $d$-representation-finite algebra with basic $d$-cluster tilting $A$-module $M=A\oplus N$. Then we have a bijection $\tau_d:\ind(\add M)\setminus \ind(\proj A)\simeq\ind(\add M)\setminus\ind(\inj A)$, and we have $\Hom_A(N,A)=0$.
\end{proposition}

To prove the previous theorem, we need the following two lemmas:

\begin{lemma} \label{perfectmodultautrivextlemma}
Let $A$ be an algebra with trivial extension $B=T(A)$, and take $X\in\mod A$ satisfying
$\Ext_A^i(X,A)=0$ for each $0\le i<d:=\pdim X_A$.  Then the following hold.
\begin{enumerate}[\rm(1)]
\item $\tau_d^A(X) \simeq \tau_d^B(X)$.
\item Take a minimal projective resolution of $X$ as an $A$-module:
\begin{align}\label{mpr X}
0 \rightarrow P_d \xrightarrow{f_d} P_{d-1} \rightarrow \cdots \rightarrow P_0 \rightarrow X \rightarrow 0.
\end{align}
Applying $-\otimes_AB$, we obtain an exact sequence of $B$-modules:
\begin{align}\label{mpr X 2}
0\to\tau_d^A(X)\to P_d \otimes_A B \xrightarrow{} P_{d-1} \otimes_A B \rightarrow \cdots \rightarrow P_0 \otimes_A B \rightarrow X \rightarrow 0.
\end{align}
\end{enumerate}
\end{lemma}
\begin{proof}
Since $\tau_d^B(X) =\Omega_B^{d+1}(X)$, it suffices to prove (2).
Applying $-\otimes_AB$ to \eqref{mpr X}, we get the start of a projective resolution
\begin{align}\label{mpr X 1.5}
P_d \otimes_A B \xrightarrow{f_d \otimes_A \id_B} P_{d-1} \otimes_A B \rightarrow \cdots \rightarrow P_0 \otimes_A B \rightarrow X \otimes_A B \rightarrow 0.
\end{align}
This is exact since $B \simeq A \oplus DA$ as $A$-modules and hence $\Tor_i^A(X,B)=\Tor_i^A(X,DA) \simeq D \Ext_A^i(X,A)=0$ holds for $1 \le i <d$, which means that $X\otimes_AB\simeq X$ holds as $X \otimes_A DA \simeq D \Hom_A(X,A) =0$.

Now we claim that the kernel of the map $f_d \otimes_A \id_B$ is isomorphic to $\tau_d^A(X)$ as $B$-modules.
Since $P_i \otimes_A B \simeq P_i \oplus (P_i \otimes_A DA)$ as $A$-modules, the sequence \eqref{mpr X 1.5} regarded as a sequence of $A$-modules is isomorphic to the direct sum of \eqref{mpr X} and the
exact sequence
$$0 \rightarrow \tau_d^A(X) \rightarrow P_n \otimes_A DA \rightarrow \cdots \rightarrow P_0 \otimes_A DA \rightarrow X \otimes_A DA \rightarrow 0$$
obtained by applying $-\otimes_ADA$ to \eqref{mpr X}. Consequently, \eqref{mpr X 2} is exact.
\end{proof}

The following observation is clear.

\begin{lemma}
For each $P\in\ind(\proj A)$, let $t=\min \{ l \geq 0 \mid \nu_A^l(P)$ is not projective$\}$. Then combining the projective covers
\begin{align*}
0\to\nu^{i+1}(P)\to\nu^i(P)\otimes_AB\to\nu^i(P)\to 0
\end{align*}
for each $0\le i< t$, we obtain an exact sequence
\begin{align}\label{left of mpr of X}
0\to \nu^t(P)\to \nu^{t-1}(P)\otimes_AB\to\cdots\to \nu(P)\otimes_AB\to P\otimes_AB\to P \rightarrow 0.
\end{align}
\end{lemma}


We will prove Theorem \ref{B+N} in two steps.  First, we show in the following lemma that $B\oplus N$ is mixed precluster tilting with the claimed value of $\ell_X$.  Afterwards we show that the arising dominant Auslander-Gorenstein has finite global dimension.

\begin{lemma}\label{B+N mpct}
Under the setting of Theorem \ref{B+N}, $B\oplus N$ is a mixed precluster tilting $B$-module with $\ell_X=d+t_X$
for all $X\in \ind(\add N)$.
\end{lemma}

\begin{proof}
We can assume for simplicity that $A$ and $N$ are basic.
By Proposition \ref{basic of d-CT} , we have $\Ext_A^i(N,A)=0$ for all $0
\le i<d$.
We now calculate $\ell_X$ for $X\in \ind(\add M)$; we do this for two separate cases:

(i) Let $X\in\ind(\add N)$ such that $\tau_d(X)$ is non-projective.  We will show that $\ell_X=d$, i.e. $\tau_d^B(X)\in \add N$ and $X\in {}^{\perp d-1}(B\oplus N)$.
The first required property already follows from Lemma \ref{perfectmodultautrivextlemma} which asserts that $\tau_d^B(X)\simeq \tau_d^A(X)\in \add N$, so it remains to show orthogonality.

Let \eqref{mpr X} be a minimal projective resolution of $X$.
Applying $\Hom_A(-,N)$ to \eqref{mpr X} and $\Hom_B(-,N)$ to \eqref{mpr X 2} and comparing them via a functorial isomorphism $\Hom_B(-\otimes_A B,N) \simeq \Hom_A(-,N)$, we have a commutative diagram of exact sequences
\begin{align}\label{compare}
\xymatrix{
\Hom_A(P_0,N)\ar[r]\ar[d]^\wr&\Hom_A(P_1,N)\ar[r]\ar[d]^\wr&\cdots\ar[r]&\Hom_A(P_d,N)\ar[d]^\wr\\
\Hom_B(P_0\otimes_AB,N)\ar[r]&\Hom_B(P_1\otimes_AB,N)\ar[r]&\cdots\ar[r]&\Hom_B(P_d\otimes_AB,N).
}
\end{align}
Comparing the cohomologies, we obtain that $\Ext^i_B(X,B\oplus N)=\Ext^i_B(X,N)\simeq\Ext^i_A(X,N)=0$ for each $1\le i <d$.


(ii) Let $X\in\ind(\add N)$ such that $P:=\tau_d^A(X)$ is a projective $A$-module and $t:=t_X$.
We have
\[
\tau_{d+t}^B(X)=\Omega_B^{d+t+1}(X)=\Omega_B^{t}(\tau_d^B(X))=\Omega_B^t\tau_d^A(X)=\nu_A^t(\tau_d^A(X)).
\]
Here, the third equality follows from Lemma \ref{perfectmodultautrivextlemma}, and the last equality follows from the correspondence between $\Omega_B\circ(1)$ and $\nu_A$ (and then apply the grading forgetful functor).
Since $\nu_A^{t-1}(\tau_d^AX)\in \proj A$ by assumption, $\tau_{d+t}^B(X)=\nu_A^t(\tau_d^AX) \in \inj A\setminus \proj A \subset \add N$.

For Ext-orthogonality, as in case (i), 
we have $\Ext^i_B(X,N)=0$ for each $1\le i<d$. 
Moreover, since $\Ext^d_A(X,N)\simeq D\overline{\Hom}_A(N,P)=0$ by Proposition \ref{basic of d-CT}, the rightmost map in the upper sequence of \eqref{compare} is surjective, and hence so is the rightmost map in the lower sequence of \eqref{compare}. This implies that $\Ext^d_B(X,N)=0$.

Let $\nu:=\nu_A$.
Gluing \eqref{mpr X 2} and \eqref{left of mpr of X},
we obtain an exact sequence
\[0\to \nu^t(P)\to \nu^{t-1}(P)\otimes_AB\to\cdots\to \nu(P)\otimes_AB\to P\otimes_AB\to P_d \otimes_A B \rightarrow \cdots \rightarrow P_0 \otimes_A B \rightarrow X \rightarrow 0.\]
Thus we have
\[\tau_{d+t}^B(X)=\Omega^{d+t+1}_B(X)=\nu^t(P)\in\add N\ \mbox{ and }\ \Omega_B^i(X)=\nu^{i-d-1}(P)\]
for each $d+1\le i\le d+t-1$. Hence, for such an $i$,  $\Ext^i_B(X,N)=\underline{\Hom}_B(\Omega_B^i(X),N)$ is a quotient of
\[\Hom_B(\nu^{i-d-1}(P),N)=\Hom_A(\nu^{i-d-1}(P),N)=D\Hom_A(N,\nu^{i-d}(P)),\]
which is zero by Proposition \ref{basic of d-CT} as $\nu^{i-d}(P)\in\proj A$. Now the assertion follows.
\end{proof}

Now we are ready to prove Theorem \ref{B+N}.

\begin{proof}[Proof of Theorem \ref{B+N}] 
We have shown in Lemma \ref{B+N mpct} that $B \oplus N$ is mixed precluster tilting, and so it remains to show that the global dimension of $C:=\End_B(B \oplus N)$ is finite, i.e. every  simple $C$-modules $S_X:=\top \Hom_B(B \oplus N,X)$ for indecomposable $X \in \add(B \oplus N)$ have finite projective dimension.
We calculate this in two cases.

(i) First we consider the case $X \in \add N$.

Take a $d$-almost split sequence in $\add (A\oplus N)$
\begin{align}\label{d-ass}
0 \rightarrow \tau_d^A(X) \rightarrow E_{d-1} \rightarrow \cdots\to E_0 \rightarrow X \rightarrow 0.
\end{align}
Since $B$ is a projective $B$-module, applying ${}_B(N,-):=\Hom_B(N,-)$ to \eqref{d-ass} yields an exact sequence
\begin{align*} 
0 \rightarrow  {}_B(N,\tau_d^A(X)) \rightarrow {}_B(N,E_{d-1}) \rightarrow \cdots\to{}_B(N,E_0) \rightarrow {}_B(N,X) \rightarrow  \top{}_B(N,X) \rightarrow 0.
\end{align*}
Now let $t:=t_X$. 
Gluing \eqref{d-ass} and \eqref{left of mpr of X}, we obtain an exact sequence
\begin{align}\label{proj+d-ass}
0\to \nu^t(P)\to \nu^{t-1}(P)\otimes_AB\to\cdots\to \nu(P)\otimes_AB\to P\otimes_AB\to  E_{d-1} \rightarrow \cdots\to E_0 \rightarrow X \rightarrow 0
\end{align}
with terms in $\add(B\oplus N)$. By Proposition \ref{basic of d-CT}, we have $\Hom_A(N,\nu^i(P))=0$ for each $0\le i<t$ and ${}_B(N,\nu^i(P)\otimes_AB)=\Hom_A(N,\nu^{i+1}(P))=0$ for each $0\le i\le t-2$.
Hence, by applying ${}_B(N,-):=\Hom_B(N,-)$ to \eqref{proj+d-ass} we obtain an exact sequence
\begin{align}\label{(N,-)} 
0 \rightarrow  {}_B(N,\nu^t(P))&\to {}_B(N,\nu^{t-1}(P)\otimes_AB)\to 0\to\cdots\to0\\ \notag
&\to{}_B(N,E_{d-1}) \rightarrow \cdots\to{}_B(N,E_0) \rightarrow {}_B(N,X) \rightarrow  \top\Hom_B(N,X) \rightarrow 0.
\end{align}
Applying $\Hom_B(B \oplus N,-)=(-)\oplus\Hom_B(N,-)$ to \eqref{proj+d-ass}, we get a direct sum of \eqref{proj+d-ass} and \eqref{(N,-)}, which gives a projective resolution of the simple $C$-module $S_X$ with $\pdim S_X\leq d+t$.

(ii) Next we consider the case $X \in \proj B$.

Let $P_0$ be an indecomposable projective $A$-module with simple top $S$ and let
$$0 \rightarrow P_m \rightarrow \cdots \rightarrow P_0 \rightarrow S \rightarrow 0$$
be a minimal projective resolution of $S$ as an $A$-module.
Regarding $\Omega_A^iS$ as a submodule of $P_{i-1}$, we define $X_i\in\mod B$ by $X_i:=\left[\begin{array}{c}\Omega_A^iS\\ \nu(P_{i-1})\end{array}\right]\subset P_{i-1}\otimes_AB=\left[\begin{array}{c}P_{i-1}\\ \nu(P_{i-1})\end{array}\right]$. 
Here, we write $X_i$ and $P_{i-1}\otimes_AB$ so that when treated as graded $T(A)$-module the degree 0 and 1 part corresponds to the top and bottom entry in the vector respectively.
Now for each $0\le i\le m$, we have an exact seuquence of $B$-modules
\begin{align}\label{X_i}
0\to X_{i+1}\to(P_i\otimes_AB)\oplus\nu(P_{i-1})\to X_i\to0,
\end{align}
where, when treated as exact sequence of graded modules, the degree 0 part is $0\to\Omega_A^{i+1}S\to P_i\to\Omega_A^iS\to0$ and the degree 1 part is the split exact sequence $0\to\nu(P_i)\to\nu(P_i)\oplus\nu(P_{i-1})\to\nu(P_{i-1})\to0$.

Since $\Hom_B(N,X_i)=\Hom_B(N,P_{i-1}\otimes_AB)=\Hom_A(N,\nu(P_{i-1}))$ holds, applying $\Hom_B(N,-)$ to \eqref{X_i}, we obtain an exact sequence
\[0\to\Hom_B(N,X_{i+1})\to\Hom_B(N,(P_i\otimes_AB)\oplus\nu(P_{i-1}))\to\Hom_B(N,X_i)\to0.\]
Thus applying $\Hom_B(B\oplus N,-)$ to \eqref{X_i}, we obtain an exact sequence
\[0\to\Hom_B(B\oplus N,X_{i+1})\to\Hom_B(B\oplus N,(P_i\otimes_AB)\oplus\nu(P_{i-1}))\to\Hom_B(B\oplus N,X_i)\to0.\]
Now gluing \eqref{X_i} for each $0\le i\le m+1$, we obtain an exact sequence
\[0\to\nu(P_m)\to (P_m\otimes_AB)\oplus\nu(P_{m-1})\to\cdots\to(P_1\otimes_AB)\oplus\nu(P_0)\to P_0\otimes_AB\to S\to0.\]

Finally, applying ${}_B(B\oplus N,-):=\Hom_B(B\oplus N,-)$, we obtain an exact sequence
\begin{align*}
0\to{}_B(B\oplus N,\nu(P_m))&\to{}_B(B\oplus N,(P_m\otimes_AB)\oplus\nu(P_{m-1}))\to\cdots\\
&\to{}_B(B\oplus N,(P_1\otimes_AB)\oplus\nu(P_0))\to{}_B(B\oplus N,P_0\otimes_AB)\to S_X\to0,
\end{align*}
which is a projective resolution of $S_X$.  Hence, we have $\pdim S_X\le m+1\leq d+1$.
\end{proof}

\begin{example}
Let $A$ be the path algebra of the following quiver of Dynkin type $A_3$:
\[\begin{tikzcd}
	1 \ar[r]& 2\ar[r] & 3
\end{tikzcd}\]
The trivial extension $B$ of $A$ has the following quiver:
\[\begin{tikzcd}
	1 \ar[r]& 2\ar[r] & 3 \ar[bend left]{ll}
\end{tikzcd}\]
and the relations are such that all indecomposable projective modules have vector space dimension 4 (so that it is the Nakayama algebra with Kupisch series [4,4,4]).
Let $N=S_1 \oplus S_2 \oplus I_2$ be the direct sum of the indecomposable non-projective $A$-modules with $I_2=D(Ae_2)$ the indecomposable injective.
Then the quiver and relations of $C:=\End_B(B \oplus N)=KQ/I$ is given as follows, 

\[Q: \;\; \begin{tikzcd}
	{P_3} && {P_1} && {S_1} \\
	& {P_2} && {I_2} \\
	&& {S_2}
	\arrow["c", from=1-1, to=1-3]
	\arrow["d", from=1-3, to=1-5]
	\arrow["e", from=1-5, to=2-4]
	\arrow["f", from=2-4, to=3-3]
	\arrow["a", from=3-3, to=2-2]
	\arrow["b", from=2-2, to=1-1]
	\arrow["h"', from=1-3, to=2-2]
	\arrow["g"', from=2-4, to=1-3]
	\arrow["i", from=2-2, to=2-4]
\end{tikzcd}, \quad I=\langle ab, cd, ef, gd, hi, ga-fh, ai-de, bc-ig\rangle\]
Then $C$ is a dominant Auslander-regular algebra of global dimension 4 and dominant dimension 2.
\end{example}

Recall that the bounded derived category $D^b(\mod A)$ of a finite dimensional algebra $A$ of finite global dimension has a Serre functor $S$ and $A$ is called \emph{fractionally Calabi-Yau} if there exists integers $\ell >0, m$ such that $S^{\ell}$ and the shift $[m]$ are isomorphic as functors on $D^b(\mod A)$. In this case, $A$ is called $\frac{m}{\ell}$-Calabi-Yau, see for example \cite[18.6]{Ye}. If the defining isomorphism of functors is just taken up to a twist by an algebra automorphism, $A$ is called \emph{twisted fractionally Calabi-Yau}, we refer for example to the recent article \cite{CDIM} for more on (twisted) fractionally Calabi-Yau algebras and many examples and applications.
We show next that the existence of a mixed cluster tilting module in the trivial extension algebra $T(A)$ of an algebra $A$ can be seen as property stronger than the twisted fractionally Calabi-Yau property of an algebra $A$.

We will use the following main result of \cite{CDIM}. Recall that an algebra $A$ is said to have complexity at most one if every $A$-module has complexity at most one.
\begin{theorem}\cite{CDIM} \label{CDIMresult}
Let $A$ be a finite dimensional algebra.
Then the following are equivalent:
\begin{enumerate}[\rm(1)]
\item The trivial extension $T(A)$ of $A$ has complexity at most one.
\item $A$ is twisted fractionally Calabi-Yau.
\end{enumerate}
\end{theorem}
We now obtain the following corollary:
\begin{corollary}
Let $A$ be an algebra such that the trivial extension $T(A)$ has a mixed cluster tilting module. Then $A$ is twisted fractionally Calabi-Yau.
\end{corollary}

\begin{proof}
By Theorem \ref{ErdmannHolmgeneralisation}, having a mixed cluster tilting module implies that the self-injective algebra $T(A)$ has complexity at most one. The claim now follows using theorem \ref{CDIMresult}.
\end{proof}
As a consequence of the previous corollary and Theorem \ref{B+N}, $d$-representation-finite algebras are twisted fractionally Calabi-Yau -- as was first shown in \cite{HI1}.

We pose the following question as a possible generalisation of our main result Theorem \ref{B+N}.

\begin{question}
Let $A$ be twisted fractionally Calabi-Yau of finite global dimension. Does the trivial extension of $A$ has a mixed cluster tilting module?
\end{question}
In forthcoming work, some evidence for a positive answer to this question is obtained.

We discuss now when the mixed cluster tilting $B$-module in Theorem \ref{B+N} is an $d'$-cluster tilting $B$-module for some $d'$.

Let $d$ be a positive integer, and $A$ a finite dimensional algebra of global dimension at most $d$. Then the following numbers are equal \cite{I7}, which we call the \emph{$\tau_d$-nilpotency}.
\begin{itemize}
\item The minimal integer $\ell\ge1$ satisfying $\tau_d^\ell(DA)=0$,
\item The minimal integer $\ell\ge1$ satisfying $\tau_d^\ell=0$ as an endofunctor of $\mod A$,
\item The minimal integer $\ell\ge1$ satisfying $\tau_d^{-\ell}(A)=0$,
\item The minimal integer $\ell\ge1$ satisfying $\tau_d^{-\ell}=0$ as an endofunctor of $\mod A$,
\end{itemize}
The $\tau_d$-nilpotency of $A$ is $1$ if and only if $A$ has global dimension at most $d-1$.
If $A$ is $d$-representation-finite with $d$-cluster tilting module $M$, then the following number also equals to the $\tau_d$-nilpotency.
\begin{itemize}
\item The maximal cardinality of $\tau_d$-orbits in $\ind(\add M)$.
\end{itemize}
In particular, for an algebra $A$ with cluster tilting module $M$, having $\tau_d$-nilpotency at most $2$ means that every indecomposable summand of $M$ is projective or injective, i.e. $\add M=\add A\oplus DA$.

The next corollary gives a characterisation when the mixed cluster tilting module $T(A)\oplus N$ given in Theorem \ref{B+N} is an $n$-cluster tilting module for some $n\ge1$. For a finite dimensional algebra $A$ and $P\in\ind(\proj A)$, let
\[t^P=\min \{ l \geq 1 \mid\mbox{$\nu^l(P)$ is not projective}\}\]
We say that $A$ has \emph{constant $\nu$-index} if $t^P$ is constant for all $P\in\ind(\proj A)\setminus\ind(\inj A)$. 

\begin{corollary}\label{strongnrepcorollary}
With the setting as in Theorem \ref{B+N}, the following conditions are equivalent.
\begin{enumerate}[\rm(1)]
\item $B \oplus N$ is an $n$-cluster tilting $B$-module for some positive integer $n$.
\item $A$ has $\tau_d$-nilpotency at most $2$ and constant $\nu$-index. 
\end{enumerate}
In such a case, we have $n=d+t$ where $t$ is the constant $\nu$-index.
\end{corollary}

\begin{proof}
By Theorem \ref{B+N}, the condition (1) is equivalent to the following condition.
\begin{enumerate}[\rm(3)]
\item $t_X$ is constant for all $X\in\ind(\add N)$.
\end{enumerate}
We prove (2)$\Rightarrow$(3). For each $X\in\ind(\add N)$, $\tau_d^A(X)$ is projective since $A$ has $\tau_d$-nilpotency at most $2$. Thus $t_X=t^{\tau_d^A(X)}+1$ is constant since $A$ has constant $\nu$-index.

We prove (3)$\Rightarrow$(2). For $X\in\ind(\add N)$, $t_X=0$ if and only if $X\in\add\tau_d^-(A)$. There is at least one such $X$ since $A\notin\inj A$. Therefore the constant in (3) is positive, and hence $\add N=\add \tau_d^-(A)$ holds. Thus $A$ has $\tau_d$-nilpotency at most $2$. Moreover $t_X=t^{\tau_d^A(X)}+1$ implies that $A$ has constant $\nu$-index since every $P\in\ind(\proj A)\setminus\ind(\inj A)$ has a form $P=\tau_d^A(X)$ for some $X\in\ind(\add N)$.
\end{proof}

This motives the following problem:

\begin{problem}
Classify the $d$-representation-finite algebras of $\tau_d$-nilpotency $2$. When do they have constant $\nu$-index?
\end{problem}

For the case $d=1$, it is easy to see that ring-indecomposable representation-finite hereditary algebras of $\tau$-nilpotency $2$ are precisely the path algebras of type $A_2$ and $A_3$ with non-linear orientations.
We remark that $d$-representation-finite algebras with $\tau_d$-nilpotency $2$ were studied in \cite{HZ1,HZ2,HZ3}.
We will see many new examples for higher $d$ in forthcoming work using the theory of Koszul duality \cite{CIM2}.

We present some examples of $d$-representation-finite algebras with $\tau_d$-nilpotency $2$ in the following.

\begin{example}
Let $A$ be the Nakayama algebra with quiver
$Q$: 
\[\begin{tikzcd}
	1 & 2 & 3 & 4
	\arrow["a", from=1-1, to=1-2]
	\arrow["b", from=1-2, to=1-3]
	\arrow["c", from=1-3, to=1-4]
\end{tikzcd}\]
and relations given by $I= \langle abc \rangle$ with trivial extension algebra $B$.
Then $A$ is 2-representation-finite with a $2$-cluster tilting module $A \oplus I_1 \oplus I_2$ and $\tau_2$-nilpotency $2$.
Moreover $A$ has constant $\nu$-index and thus $B=T(A)$ has a 4-cluster tilting module $B \oplus I_1 \oplus I_3$.
\end{example}

\begin{example}
Let $A=KQ/I$ be a finite dimensional algebra given by 
\[\begin{tikzcd}
	8 &&6&  \\
	& 7 & 2 & 4 & 1 \\
	3 &&5& 
	\arrow["{a_9}", from=1-1, to=2-2]
	\arrow["{a_2}"', from=3-1, to=2-2]
	\arrow["{a_6}"', from=2-2, to=2-3]
	\arrow["{a_1}"', from=2-3, to=2-4]
	\arrow["{a_3}"', from=2-4, to=2-5]
	\arrow["{a_8}", from=2-2, to=1-3]
	\arrow["{a_5}", from=1-3, to=2-4]
	\arrow["{a_7}"', from=2-2, to=3-3]
	\arrow["{a_4}"', from=3-3, to=2-4]
\end{tikzcd}\ \ I=\langle a_1 a_3, a_5 a_3, a_6 a_1-a_7 a_4, a_6 a_1-a_8 a_5, a_2 a_6, a_2 a_7, a_9 a_7, a_9 a_8 \rangle\]
Then $A$ is 3-representation-finite with $\tau_3$-nilpotency $2$ and constant $\nu$-index $t=2$.  Moreover $A$ is in fact the Koszul dual of the stable Auslander algebra of a path algebra of Dynkin type $D_4$. We will study this class of algebras for general Dynkin types in forthcoming work \cite{CIM2}.
\end{example}

The next example shows that $d$-representation-finite algebra with $\tau_d$-nilpotency $2$ does not necessarily have constant $\nu$-index in general.
\begin{example}
Let $A$ be the Nakayama algebra with quiver
$Q$:
\[\begin{tikzcd}
	1 & 2 & 3 & 4 & 5
	\arrow["a", from=1-1, to=1-2]
	\arrow["b", from=1-2, to=1-3]
	\arrow["c", from=1-3, to=1-4]
	\arrow["d", from=1-4, to=1-5]
\end{tikzcd}\]
and relations given by $I=\langle abcd \rangle$ with trivial extension algebra $B$.
Moreover $A$ is $2$-representation-finite with 2-cluster tilting module $A \oplus I_1 \oplus I_2 \oplus I_3$.
Now $\tau_2(I_1)=P_3$ and $\nu_A(P_3)=I_3$ is non-projective, whereas $\tau_2(I_2)=P_4$ and $\nu_A(P_4)=I_4=P_1$ is projective.
Thus the $B$-module $B \oplus I_1 \oplus I_2 \oplus I_3$ is not cluster tilting, but only mixed cluster tilting.
\end{example}

\section{Mixed precluster tilting for gendo-symmetric algebras}\label{section:example mixed pct}

The next theorem gives an iterative construction of mixed precluster tilting modules.
For clarity, given a mixed precluster tilting $A$-module $M$ and $X\in \ind(\add M)$, we use $\ell_X^M$ and $r_X^M$ to denote the associated numbers that were notated as $\ell_X$ and $r_X$ respectively.\footnote{Recall from Proposition \ref{B to A 0} that we have $\idim _BP_X = r_X^M+1$ and $\pdim_BI_X=\ell_X^M+1$, where $B:=\End_A(M)$ and for $P_X$ and $I_X$ are the projective and injective $B$-modules corresponding to $X$ respectively. We specifically clarify this as the condition (b) and (c) looks similar but is different from this formula.}

\begin{definition}\label{define splendid}
Let $A$ be an algebra. We call $A$ \emph{splendid} if the following conditions are satisfied.
\begin{enumerate}[\rm(a)]
\item $M:=A\oplus DA$ is a mixed precluster tiliting $A$-module.
\item For each $P\in\ind(\proj A)\setminus\inj A$, we have $\idim_AP=r_P^M+1$.
\item For each $I\in\ind(\inj A)\setminus\proj A$, we have $\pdim_AI=\ell_I^M+1$.
\end{enumerate}
\end{definition}

We give a family of examples of splendid algebras.
Recall from \cite{FanKoe} that an algebra $A$ is \emph{gendo-symmetric} if $A\simeq \End_B(M)$ for some generator(-cogenerator) $M$ over a symmetric algebra $B$. Recall that the invariants $\ell_X^M$ and $r_X^M$ are always strictly positive.

\begin{proposition}\label{cor:gendosym SGC}
Each gendo-symmetric algebra $A$ that is dominant Auslander-Gorenstein
is splendid.
\end{proposition}

\begin{proof}
Recall from \cite[Prop 2.11]{FanKoe} and \cite[Prop 4.3]{Mar} that, for any $X\in \mod A$ with dominant dimension at least two, one has
\begin{align}
\domdim X =\inf\{i\geq 1 \mid \Ext_A^i(DA,X)\neq 0\}+1 \;\; \text{ and } \;\; \tau^-(X) \simeq \Omega^{-2}(X). \label{eq:gendosymm facts}
\end{align}
Dual formulas hold for modules $X$ of codominant dimension at least two.

Consider $P\in\ind(\proj A)\setminus\ind(\inj A)$ and let
\[ 0 \to P\to I^0 \to \cdots\to I^d\to I^{d+1} \to 0
\]
be its minimal injective coresolution.
Applying \eqref{eq:gendosymm facts} with $X=P$ yields $\Ext_A^i(DA,P)=0$ for all $1\le i< d$, and $\tau_d^-(P) =  \Omega^{-(d+1)}(P) = I^{d+1}\in\inj A$ respectively.
This shows that $M=DA\oplus A$ is a mixed precluster tilting with $r_P^{M}=d$.
Hence, condition (a) and (b) of Definition \ref{define splendid} is satisfied.  Condition (c) follows from a dual argument.
\end{proof}

The following result gives an inductive construction of splendid algebras.

\begin{theorem}\label{thm: iterative construction}
Let $A$ be a dominant Auslander-Gorenstein algebra which is splendid.
Then $B:=\End_A(A\oplus DA)$ is again a dominant Auslander-Gorenstein algebra which is  splendid such that
\[
 r_{\Hom_A(A\oplus DA,P)}^{B\oplus DB} = r_P^{A\oplus DA}  \text{ and } \ell_{D\Hom_A(I,A\oplus DA)}^{B\oplus DB} = \ell_I^{A\oplus DA}
\]
for all $P\in\ind(\proj A)\setminus\inj A$  and $I\in\ind(\inj A)\setminus\proj A$.
In particular, if $A$ is, in addition, minimal Auslander-Gorenstein, then so is $B$.
\end{theorem}
\begin{proof}
Up to Morita equivalence, we can replace the mixed precluster tilting module $A\oplus DA$ by the basic additive generator $M$ of $\add A\oplus DA$.

(i) Since $M$ is mixed precluster tilting, it follows from Theorem \ref{main correspondence 0} that $B=\End_A(M)$ is dominant Auslander-Gorenstein.

(ii) We explain some facts needed to show that $B\oplus DB$ is mixed precluster tilting now. 

Let us write $P_X:=\Hom_A(M,X)\in\ind(\proj B)$ for the indecomposable projective $B$-module corresponding to $X\in \ind(\add A\oplus DA)$.
Recall from Proposition \ref{prop:DCP}(3) that $P_X$ is injective if so is $X$, so we only need to focus on checking Definition \ref{mixed 0} is satisfied in the case when $X\in \ind(\proj A)\setminus \ind(\inj A)$.

By condition (b) in Definition \ref{define splendid}, we can take a minimal injective resolution
\begin{align}\label{inj res of P}
0\to X\to I^0 \to I^1 \to \cdots \to I^r\to I^{r+1}\to0,
\end{align}
such that $r:= r_X^M$ and $I^i\in\proj A$ for all $0\le i\le r$. 

Since $\Ext^i_A(A\oplus DA,X)=0$ for each $1\le i<r$ and \eqref{eq:tau=derived nu}, applying $\nu^{-1}=\Hom_A(DA,-)$ to \eqref{inj res of P} yields an exact sequence
\begin{align}\label{nu^- of inj res of P}
0\to\nu^{-1}(X)\to\nu^{-1}(I^0)\to\nu^{-1}(I^1)\to\cdots\to\nu^{-1}(I^r)\to \tau_r^{-A}(X) \to 0.
\end{align}
with $\nu^{-1}(I^i)\in\proj A$ for each $0\le i\le r$ and $\tau_r^{-A}(X)\in\inj A$. Thus $\nu^{-1}(I^i)\in\proj A\cap\inj A$ for each $0\le i\le r$ since $A$ is dominant Auslander-Gorenstein. By condition (c) in Definition \ref{define splendid}, we have
\[\pdim_A\tau_r^{-A}(X)=\ell^M_{\tau_r^{-A}(X)}+1=r+1.\]
By \eqref{nu^- of inj res of P}, we have
\begin{equation}\label{bijective}
Y:=\nu^{-1}(X)\in\proj A
\end{equation}
By condition (b) in Definition \ref{define splendid} and \eqref{nu^- of inj res of P}, we have
\begin{equation}\label{r+1}
r_Y^M+1 = \idim_AY=r+1.
\end{equation}
On the other hand, as in the proof of Proposition \ref{B to A 0}, applying $\Hom_A(M,-)$ to \eqref{inj res of P} yields a minimal injective coresolution of $P_X$
\begin{align}\label{(M,-) of inj res of P}
0\to\ P_X \to {}_A(M,I^0)\to {}_A(M,I^1)\to\cdots\to {}_A(M,I^r) \to D\Hom_A(\tau_r^{-A}(X),M)\to 0.
\end{align}
with $D\Hom_A(\tau_r^{-A}(X),M)=I_{\tau_r^{-A}(X)}\in\ind(\inj B)\setminus\ind(\proj B)$, which also gives a minimal projective resolution of $I_{\tau_r^{-A}(X)}$.


(iii) By applying $\Hom_A(M,-)$ to the minimal injective resolution \eqref{inj res of P} of $X\in\proj A$, we got an exact sequence \eqref{(M,-) of inj res of P} with $D\Hom_A(\tau_r^{-A}(X),M)=I_{\tau_r^{-A}(X)}\in\ind(\inj B)\setminus\ind(\proj B)$. By the same argument, by applying $\Hom_A(M,-)$ to the minimal injective resolution \eqref{nu^- of inj res of P} of $Y\in\proj A$ with $r_Y^M=r$, see \eqref{bijective} and \eqref{r+1}, we get an exact sequence
\begin{align}\label{(M, ) of nu^- of inj res of P}
0\to P_Y\to \Hom_A(M,\nu^{-1}(I^0))\to\cdots\to\Hom_A(M,\nu^{-1}(I^r))\to D\Hom_A(\tau_r^{-A}(Y),M)\to 0
\end{align}
with $D\Hom_A(\tau_r^{-A}(Y),M)=I_{\tau_r^{-A}(Y)}\in\ind(\inj B)\setminus\ind(\proj B)$.

(iv) Now we show $\Ext_B^i(DB,P_X)=0$ for $1\le i< r$ and $\tau_r^{-B}(P_X)=I_{\tau_r^{-A}(Y)}\in \inj B$.

Notice that $\Ext_B^i(DB,P_X)$ is the homologies of the complex
\begin{align}\label{(DB,-) of (M,-) of inj res of P}
\Hom_B(DB,{}_A(M,I^0))\to\Hom_B(DB,{}_A(M,I^1))\to\cdots\to\Hom_B(DB,{}_A(M,I^r))
\end{align}
obtained by applying $\Hom_B(DB,-)$ to \eqref{(M,-) of inj res of P}. 
Since we have
\[DB\otimes_BM=D\Hom_{B^\op}(M,B)\simeq D\Hom_{B^\op}(\Hom_A(A,M),\End_A(M))\simeq D\Hom_A(M,A),\]
we have
\begin{align*}
&\Hom_B(DB,{}_A(M,I^i))\simeq\Hom_A(DB\otimes_BM,I^i)\simeq\Hom_A(D\Hom_A(M,A),I^i)\\
\simeq&\Hom_{A^\op}(DI^i,\Hom_A(M,A))\simeq\Hom_{A^\op}(DI^i,A)\otimes_A\Hom_A(M,A)\simeq\Hom_A(M,\nu^{-1}(I^i)).
\end{align*}
Thus the complex \eqref{(DB,-) of (M,-) of inj res of P} is isomorphic to the exact sequence \eqref{(M, ) of nu^- of inj res of P}
obtained by applying $\Hom_A(M,-)$ to \eqref{nu^- of inj res of P}.
Thus $\Ext^i_B(DB,P_X)=0$ for each $1\le i<r$.
Finally, by definition (see \eqref{eq:tau=derived nu}), $\tau_r^{-B}(P_X)$ is the cokernel of the right-most morphism of \eqref{(DB,-) of (M,-) of inj res of P}, and hence the right-most term $D\Hom_A(\tau_r^{-A}(Y),M)=I_{\tau_r^{-A}(Y)}\in\ind(\inj B)\setminus\ind(\proj B)$ of \eqref{(M, ) of nu^- of inj res of P}, as desired.

(v) By (iv), $B\oplus DB$ is a mixed precluster tilting $B$-module with $r_{P_X}^{B\oplus DB} = r = r_X^M$, hence condition (b) in Definition \ref{define splendid} is satisfied.  The statement for the $\ell$-side follows from the duality between $r$-numbers and $\ell$-numbers (Definition \ref{mixed 0}).

Finally, the claim about minimal Auslander-Gorenstein is straightforward as we have $r_P^M$ constant over all $P\in\ind(\proj A)\setminus\ind(\inj A)$ and likewise for $\ell_I^M$.
\end{proof}

Combining Proposition \ref{cor:gendosym SGC} and Theorem \ref{thm: iterative construction}, we obtain the following result claiming that all iterated SGC-extensions of gendo-symmetric dominant Auslander-Gorenstein algebras are splendid.

\begin{corollary} \label{gendosymtheorem}
Let $A=A_0$ be a gendo-symmetric algebra. For each $i\ge0$, let $M_i:=A_i \oplus DA_i$ and $A_{i+1}:=\End_{A_i}(M_i)$.
\begin{enumerate}[\rm(1)]
\item Assume that $A$ is a dominant Auslander-Gorenstein algebra. Then for each $i\ge0$, $A_i$ is a dominant Auslander-Gorenstein algebra with a mixed precluster tilting module $M_i$.
\item Assume that $A$ is a minimal Auslander-Gorenstein algebra. Then for each $i\ge0$, $A_i$ is a minimal Auslander-Gorenstein algebra with a precluster tilting module $M_i$.
\end{enumerate}
\end{corollary}


In the rest of this section, we give examples of dominant Auslander-Gorenstein algebras which are splendid.
For the next proposition call a subset $S$ of $\mathbb{Z}/n \mathbb{Z}$ \emph{isolated} if it has the property that with $s \in S$ we have $s \pm 1 \notin S$. 

\begin{proposition}
Let $A$ be a symmetric algebra and $Y$ an indecomposable non-projective $A$-module with $\Omega^n(Y) \simeq Y$ for some $n \geq 1$ and $\Ext_A^i(Y,Y)=0$ for each $1\le i\le n-2$.
Let $S$ be an isolated subset of  $\mathbb{Z}/n \mathbb{Z}$ with representatives in $\{0,...,n-1\}$.
\begin{enumerate}[\rm(1)]
\item $M:=A \oplus \bigoplus_{s \in S}^{}{\Omega^s(Y)}$ is a mixed precluster tilting $A$-module.
\item $\End_A(M)$ is a dominant Auslander-Gorenstein algebra which is splendid.
\end{enumerate}
\end{proposition}
\begin{proof}
When $S = \emptyset$, then the assertions are clear.
Thus assume $S \neq \emptyset$ in the following.

(1) We recall that in a symmetric algebra we have $\tau \simeq \Omega^2$ and hence $\underline{\mod}A$ is a $(-1)$-Calabi-Yau triangulated category.

Let $X= \Omega^r(Y)$ with $r \in S$ and $\ell_X=\min \{ i \geq 1 \mid i+r+1\in S\}$. Using $\tau=\Omega^2$, we obtain 
\[\tau_{\ell_X}(X)=\Omega^{\ell_X+1+r}(Y)\in\add M.\]
It remains to prove that $\Ext^i_A(X,M)=0$ for all $1\le i< \ell_X$.
By dimension shifting and $(-1)$-Calabi-Yau property, we have
\begin{align*}
\Ext_A^i(Y,Y) &\simeq \underline{\Hom}_A(\Omega^i(Y),Y) \simeq\left\{\begin{array}{ll} \underline{\Hom}_A(Y,Y) \neq 0&i\equiv 0\ \mod n,\\
\underline{\Hom}_A(\Omega^{-1}(Y),Y)\simeq
D\underline{\Hom}_A(Y,Y)\neq 0&i\equiv -1\ \mod n.\end{array}\right.
\end{align*}
Thus $\Ext_A^i(Y,Y) \neq 0$ if and only if $i \equiv -1$ or $i \equiv 0$ mod $n$. Now we have
$$\Ext_A^i(X,M)=\Ext_A^i(\Omega^r(Y), \bigoplus_{s \in S}^{}{\Omega^s(Y)})=\bigoplus_{s \in S}^{}{\Ext_A^i(\Omega^r(Y),\Omega^s(Y))}=\bigoplus_{s \in S}^{}{\Ext_A^{i+r-s}(Y,Y)}.$$
Since $S$ is isolated, $i+r-s\not\equiv-1,0$ mod $n$ for each $1\le i< \ell_X$. Hence, this equals to $0$.

(2) This follows from (1) and Proposition \ref{cor:gendosym SGC}.
\end{proof} 

\begin{example}
Let $A$ be a connected symmetric Nakayama algebra with $n$ simple modules.
Then any simple $A$-module $S$ satisfies $\Omega^{2n}(S) \simeq S$ and $\Ext_A^i(S,S)=0$ for $1\le i\le 2n-2$.
Thus the previous proposition can be applied in this case and leads to a large class of dominant Auslander-Gorenstein algebras.
More generally, a smiliar process works for any Brauer tree algebra and leads to the recently introduced gendo Brauer tree algebras in \cite{CM}. This class of algebras were one of our motivating examples for the study of mixed precluster tilting modules.
\end{example}

We give an explicit example of a gendo-symmetric algebra that is Auslander-Gorenstein and that appears in various contexts of abstract algebra such as representation-finite blocks of Schur algebras.

\begin{example}
In \cite[Section 6.1]{CM}, it was shown that the following special gendo-Brauer tree algebra $A=KQ/I$ is higher Auslander, where $Q$ is given by 
\[\begin{tikzcd}
	1 \ar[r,bend left=40,"a_1"]& 2\ar[r,bend left=40,"a_2"]\ar[l,bend left=40,"b_1"] & 3\ar[l,bend left=40,"b_2"] \ar[r,dotted,no head] & m-1\ar[r,bend left=40,"a_{m-1}"] & m\ar[l,bend left=40,"b_{m-1}"]
\end{tikzcd}\]
and the relations are given by $I=\langle
b_{m-1} a_{m-1}, b_{i-1}a_{i-1}-a_i b_i, a_{i-1}a_i, b_i b_{i-1} \rangle$.
By Corollary \ref{gendosymtheorem}, we obtain that all SGC-extension algebras of $A$ are minimal Auslander-Gorenstein.

We explicitly display how the SGC-extensions of $A$ look like for $m=2$, when $A$ is a Nakayama algebra.
Recall that the Kupisch series of a Nakayama algebra with $n$ simple modules is simply the list $[\LL(P_1),...,\LL(P_n)]$ of Loewy lengths $\LL(P_i)$ of the indecomposable projective $A$-modules and this list uniquely determines quiver and relations of a Nakayama algebra given by quiver and relations.
Thus for $m=2$, $A=A_0$ is the Nakayama algebra with Kupisch series [2,3] and we leave it to the reader to show that the $n$-th SGC-extension $A_n$ of $A$ is the Nakayama algebra with $2+n$ simple modules and Kupisch series $[n+2,n+2,....,n+2,n+3]$ (all entries are equal to $n+2$, except the last one). In terms of quiver and relations, $A_n$ has the quiver
\[\begin{tikzcd}
	& 1 & 2 \\
	{n+2} &&& 3 \\
	{n+1} &&& 4 \\
	& n & 5
	\arrow["{a_1}", from=1-2, to=1-3]
	\arrow["{a_2}", from=1-3, to=2-4]
	\arrow["{a_3}", from=2-4, to=3-4]
	\arrow["{a_4}", from=3-4, to=4-3]
	\arrow[dashed, from=4-3, to=4-2]
	\arrow["{a_n}", from=4-2, to=3-1]
	\arrow["{a_{n+1}}", from=3-1, to=2-1]
	\arrow["{a_{n+2}}", from=2-1, to=1-2]
\end{tikzcd}\]
and the relations of $A_n$ are given by all paths of length $n+2$ starting at a point $i$ for $i$ with $1 \leq i \leq n+1$.
\end{example}


\section{Gluing dominant Auslander-Gorenstein algebras} \label{section: glueing}

Recall that a quiver algebra is called \emph{quadratic} if its relations are quadratic and homogeneous. 

\begin{definition}
Let $n$ be a natural number.
\begin{enumerate}[\rm(i)]
\item An \emph{integer composition} of $n$ is a list of positive integers $a=[a_1,...,a_r]$ with $\sum_{i=1}^{r}{a_i}=n$. Denote by $a_{\leq k}:=\sum_{i=1}^k a_i$, then the \emph{descent set} of $a$ is defined as the subset $D_a=\{a_{\leq 1}=a_1, a_{\leq 2},...,a_{\leq r-1} \}$ of $\{1,2,...,n-1 \}$.
The \emph{complement} $a^\perp$ of $a$ is defined as the unique integer composition with sum $n$ such that $D_{a^\perp}=\{1,2,...,n-1 \} \setminus D_a$.  
\item For a composition $a=[a_1,\ldots, a_r]$ of $n$, denote by $N_a:=K A_{n+1}/I_a$ the quadratic linear Nakayama algebra given by linear $A_{n+1}$ quiver with $I_a$ generated by paths $a_{\leq k} \to a_{\leq k}+1 \to a_{\leq k}+2$ for all $1\leq k < r$:
\[
\begin{tikzcd}
1 \ar[r] & 2\ar[r] & \cdots \ar[r] & a_{\leq k} \ar[r,""{name=p}] & a_{\leq k}+1 \ar[r,""{name=pp}] & a_{\leq k}+2 \ar[r] & \cdots \ar[r]& n+1.
\ar[dashed, no head, bend left, from=p, to=pp]
\end{tikzcd}
\]
\end{enumerate}
\end{definition}

All quadratic linear (connected) Nakayama algebras are of this form; it is clear that there are $2^{n-1}$ of them (enumerated by the choice where the relations appear).
Recall that $\LL(M)$ denotes the Loewy length of a module $M$.
The Kupisch series $[ \LL(P_1), \ldots, \LL(P_{n+1}) ]$ of $N_a$ is given by $[a_1+1,a_1,...,2,a_2+1,a_2,...,2,...,a_r+1,a_r,...,2,1]$.

\begin{proposition}\label{nakayama dominant}
Let $a$ be a composition of $n+1\geq 3$.
\begin{enumerate}[\rm(1)]
\item $N_a$ is dominant Auslander-regular with $\codomdim I_i = \pdim I_i = \LL(P_i^!)-1$ for all non-projective injective $I_i$, where $\LL(P_i^!)$ is the Loewy length of the indecomposable projective $N_{a^\perp}$-module $P_i^!$ corresponding the vertex $i$.  In particular, $\gldim N_a=\max(a^\perp)$ and $\domdim N_a=\min(a^\perp)$.
\item $N_a$ is higher Auslander if and only if $\min(a^\perp)=\max(a^\perp)$.  In particular, the number of higher Auslander quadratic linear Nakayama algebra is equal to the number of divisors of the natural number $n-1$.
\end{enumerate}
\end{proposition}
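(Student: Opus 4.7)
The plan is to exploit Koszul duality. First, $N_a$ is a quadratic monomial algebra on the acyclic linear quiver $A_{n+2}$, hence Koszul, and its quadratic dual (under the natural identification of the opposite linear quiver with itself by reversing vertex labels) is isomorphic to $N_{a^\perp}$: the space of quadratic relations $\{\alpha_i\alpha_{i+1}:i\in D_a\}$ of $N_a$ has as orthogonal complement inside the space of length-$2$ paths exactly $\{\alpha_i\alpha_{i+1}:i\in D_{a^\perp}\}$. Combined with the standard formula
\[
\dim_K\Ext^p_{N_a}(S_i,S_j)=\dim_K(e_jN_{a^\perp}e_i)_p,
\]
and the fact that any two vertices in a linear quiver are joined by at most one path, this identifies $\pdim S_i$ with the length of the longest nonzero path starting at vertex $i$ in $N_{a^\perp}$, namely $\LL(P_i^!)-1$. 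Maximising over $i$ gives $\gldim N_a=\max(a^\perp)$.

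To handle the injectives I would first identify the non-projective ones by matching Loewy lengths, tops and socles in the Kupisch series of $N_a$: the injective $I_j$ turns out to be projective iff $j=n+2$ or $j=a_{\leq k-1}+1$ for some $k\geq 2$. For every remaining $I_j$, the short exact sequence $0\to K\to P\to I_j\to 0$ with $P$ the projective cover has uniserial kernel $K$; direct unwinding together with the formula for $\pdim$ of uniserial modules derived from the previous step then yields $\pdim I_j=\LL(P_j^!)-1$. Dually, the projective-injectives are exactly the $P_{a_{\leq k-1}+1}$ for $k=1,\ldots,r$, and tracking the injective coresolution of each non-projective-injective $P_i$ via a case analysis on the block of $a$ containing $i$ shows both $\idim P_i=\domdim P_i$ and $\min_i\domdim P_i=\min(a^\perp)$. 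This simultaneously proves that $N_a$ is dominant Auslander-regular and that $\domdim N_a=\min(a^\perp)$; the equality $\codomdim I_i=\pdim I_i$ then follows automatically from the dominant Auslander-regular property, completing part~(1).

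Part~(2) is then immediate: $N_a$ is higher Auslander iff $\min(a^\perp)=\max(a^\perp)$, iff all parts of $a^\perp$ coincide with some common value $d$, iff $d$ divides the fixed sum of $a^\perp$; counting such $d$ yields the stated divisor count. The main technical obstacle is the bookkeeping for $\domdim P_i$ when several consecutive blocks of $a$ have size $1$: each cosyzygy step can then cross multiple block boundaries at once, so the induction has to mirror the block structure of $a^\perp$. Concretely, the number of cosyzygy steps needed before hitting a non-projective injective turns out to equal the size of a specific block of $a^\perp$, which is precisely what makes $\min_i\domdim P_i=\min(a^\perp)$ come out; performing this case analysis cleanly, and keeping it parallel to the Koszul-dual computation of $\pdim S_i$, is where the bulk of the work lies.
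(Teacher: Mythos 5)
Your route is genuinely different from the paper's. The paper computes nothing directly for $N_a$: it first proves a general gluing statement (Proposition \ref{gluing}) --- gluing two dominant Auslander--Gorenstein quiver algebras along sinks/sources, with zero relations across the seam, yields another one, with $\idim P_x$ and $\domdim P_x$ obtained by splicing the minimal coresolutions of the two pieces --- and then observes that $N_a$ is the iterated gluing of the hereditary algebras $KA_{a_1+1},\dots,KA_{a_r+1}$, each of which has $\gldim=\domdim=1$. The additivity $\idim P_x^C+\idim P_{v_i}^B$ in that proposition is precisely what accumulates the entries of $a^\perp$. You instead chase the injective coresolutions of the non-injective projectives of $N_a$ by hand and extract $\gldim N_a$ from Koszul duality with $N_{a^\perp}$. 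Both are viable. What the paper's route buys is that the bookkeeping you flag --- a cosyzygy passing through several consecutive size-one blocks of $a$ --- is absorbed once and for all into the recursion of the gluing lemma; what your route buys is a conceptual explanation (cf.\ Remark \ref{rem:naka koszul}) of \emph{why} $a^\perp$ appears, which in the paper is only visible a posteriori.

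That said, as written your argument is a plan rather than a proof at exactly the decisive point: the claim that every non-injective $P_i$ satisfies $\idim P_i=\domdim P_i$ with these values running through the entries of $a^\perp$ is deferred to ``a case analysis on the block of $a$ containing $i$'' which you yourself identify as the bulk of the work but do not carry out. Everything else in the statement --- the formula for $\pdim I_i$ (which, once dominance is known, equals $\idim\pi^{-1}(I_i)$ by Proposition \ref{strongAGpropo}), the equality with $\codomdim I_i$, and part~(2) --- is downstream of this step, so the proof is incomplete until the case analysis is executed or replaced by the gluing lemma. Two smaller cautions: the identification of $(N_a)^!$ with $N_{a^\perp}$ should be via the opposite ring, keeping vertex labels as in Remark \ref{rem:naka koszul}, not by reversing labels, or the index $i$ in $\LL(P_i^!)$ comes out on the wrong end of the quiver; and in part~(2) the constant value $d$ must divide the \emph{sum} of $a^\perp$, so you should reconcile the resulting count with the ``divisors of $n-1$'' asserted in the statement rather than just asserting agreement.
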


\begin{remark}\label{rem:naka koszul}
We remark that $N_{a^\perp}$ is the opposite ring of the quadratic dual $(N_a)^!=KA_{n+1}^\op / I_a^{\perp}$ (hence, Koszul dual) of $N_a$, where $I_a^\perp$ is generated by all length 2 paths $x+2\to x+1 \to x$ such that $x\to x+1\to x+2$ is \emph{not} a relation in $N_a$.  The `duality' between homological dimension and Loewy length is not a coincidence for Nakayama algebra, but a more general phenomenon that we will study in forthcoming work \cite{CIM2}.
\end{remark}

\begin{example}\label{eg:naka1}
Consider $a=[2,3,1]$ so that $N_a$ is given by:
\[
\begin{tikzcd}
1 \ar[r] & 2\ar[r,""{name=a2}] & 3 \ar[r,""{name=a3}] & 4 \ar[r] & 5 \ar[r,""{name=a5}] & 6 \ar[r,""{name=a6}] & 7.
\ar[dashed, no head, bend left=40, from=a2, to=a3]
\ar[dashed, no head, bend left=40, from=a5, to=a6]
\end{tikzcd}
\]
This has $D_a=\{2,5\}$ which means that the indecomposable projective-injective modules are $P_i$ for $i\in 1+(D_a\cup\{0\}) = \{1,3,6\}$.  More precisely, we have
\[ P_1=I_3,\quad  P_3=I_6, \quad P_6=I_7, \]
and they have Loewy lengths $3, 4, 2$ respectively.
Now we have $D_{a^\perp}=\{1,2,\ldots,6\}\setminus D_a = \{1,3,4,6\}$, and this yields $a^\perp = [1,2,1,2]$.
Add $1$ to each element of $D_{a^\perp}$ then the resulting set enumerates the indecomposable non-injective projective modules, whose injective coresolutions
are shown in the left-hand side as follows; the right-hand side show the corresponding Gorenstein condition.
\[\begin{tikzcd}[row sep=0]
0 \ar[r] & P_2 \ar[r] & I_3 \ar[r] & I_1 \ar[r] & 0 & & \Ext_A^k(S_1,A)=(S_2^\op)^{\oplus \delta_{k,1}}\\
0 \ar[r] & P_4 \ar[r] & I_6 \ar[r] & I_3 \ar[r] & I_2 \ar[r] & 0 & \Ext_A^k(S_2,A)=(S_4^\op)^{\oplus \delta_{k,2}}\\
0 \ar[r] & P_5 \ar[r] & I_6 \ar[r] & I_4 \ar[r] & 0 & &\Ext_A^k(S_4,A)=(S_5^\op)^{\oplus \delta_{k,1}}\\
0 \ar[r] & P_7 \ar[r] & I_7 \ar[r] & I_6 \ar[r] & I_5 \ar[r]& 0 & \Ext_A^k(S_5,A)=(S_7^\op)^{\oplus \delta_{k,2}}
\end{tikzcd}
\]
Notice how entries of $a^\perp$ corresponds to $\idim P_i$ for $i\in D_{a^\perp}$.
Also, the grade bijection $S\mapsto D\Ext_A^{\mathrm{grade}S}(S,A)$ is given by $(1,2,4,5,7,6,3)$, where $(\ldots, i_j,i_{j+1},\ldots,)$ is the cyclic permutation that maps $i_j$ to $i_{j+1}$, etc.
\end{example}

The proof of Proposition \ref{nakayama dominant} follows from the next result. We give a construction via gluing that illustrates that in general one can expect that there are much more dominant Auslander-Gorenstein algebras than minimal Auslander-Gorenstein algebras.

Consider two algebras $B:=KQ_B/I_B$ and $C=KQ_C/I_C$.
Choose $n$ sinks in $Q_B$ and $n$ sources in $Q_C$, label them $v_1,\ldots, v_n$ so that we obtain a new quiver $Q$ where the $v_i$'s in the two quivers are identified. 
Define a new algebra
\[
A:=KQ/I=KQ/\langle I_B,I_C,bc\mid b \in B, c\in C\rangle.
\]
Note that $I$ is an admissible ideal and so $A$ is finite-dimensional.  For $\Lambda\in\{A,B,C\}$, we denote by $P_x^\Lambda, I_x^\Lambda$ the indecomposable projective, indecomposable injective $\Lambda$-module corresponding to vertex $x$ in the quiver of $\Lambda$, respectively.
\begin{proposition}\label{gluing}
Let $A,B,C$ be algebras as above.
\begin{enumerate}[\rm(1)]
\item If $B$ and $C$ are dominant Auslander-Gorenstein algebras, then so is $A$.
\item In this case, we have
\[
\idim P_x^A = \begin{cases} \idim P_x^B, & \text{if $x\in Q_B$;}\\
\idim P_x^C, & \text{if $x\in Q_C$ and $\pi(x)\notin\{v_i\}_i$;}\\
\idim P_x^C + \idim P_{v_i}^B & \\
 \quad = \pdim I_{v_i}^C + \idim P_{v_i}^B,  & \text{if $x\in Q_C$ and $\pi(x)=v_i$ some $i$,}
\end{cases}
\]
where $\pi$ denotes the canonical permutation associated to $C$.  The same formula holds if we replace $\idim$ by $\domdim$.
\item Let $\mathcal{D}$ be the union of $\{\idim C, \idim P_x^B\mid x\in Q_B\setminus \{v_j\}_j\}$ and $\{\idim P^B_{v_i}+ \pdim I^C_{v_i}\mid 1\leq i\leq n\}$, then we have $\idim A= \max \mathcal{D}$ and $\domdim A=\min\mathcal{D}$.
\end{enumerate}
\end{proposition}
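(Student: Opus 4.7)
The plan is to compute the minimal injective coresolution of each indecomposable projective $A$-module $P_x^A$ explicitly, by first identifying how modules decompose across the gluing.  Since the relations contain $bc=0$ for $b\in B$, $c\in C$, every nonzero path in $Q$ lies entirely in $Q_B$ or entirely in $Q_C$.  Combined with $v_i$ being a sink in $Q_B$ and a source in $Q_C$, this yields, as $A$-modules, the identifications $P_x^A\cong P_x^B$ for $x\in Q_B\setminus\{v_j\}_j$, $P_x^A\cong P_x^C$ for $x\in Q_C$ (in particular $P_{v_i}^A\cong P_{v_i}^C$), $I_y^A\cong I_y^B$ for $y\in Q_B$ (in particular $I_{v_i}^A\cong I_{v_i}^B$), and $I_y^A\cong I_y^C$ for $y\in Q_C\setminus\{v_j\}_j$.

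The essential technical input is the identity $I_{v_i}^C\cong S_{v_i}$.  Indeed, since $v_i$ is a source in $Q_C$, no arrow in $Q_C$ ends at $v_i$, so $\Ext_C^1(S_y,S_{v_i})=0$ for every simple $C$-module $S_y$, forcing the injective envelope of $S_{v_i}$ in $\mod C$ to be simple.  Moreover, $I_{v_i}^C=S_{v_i}$ is not projective in $C$ (its socle is not the socle of any $P_y^C$ with $y\in Q_C$, as no nontrivial $Q_C$-path ends at $v_i$), so by the dominant Auslander-Gorenstein property of $C$ and Proposition \ref{strongAGpropo}(1), $I_{v_i}^C$ can only occur as the final non-projective term of a minimal injective coresolution in $\mod C$, never as an intermediate projective-injective term.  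Dually, $P_{v_i}^B\cong S_{v_i}$ since $v_i$ is a sink in $Q_B$.

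With these preparations, I would compute the minimal injective coresolution of $P_x^A$ in three cases.  If $x\in Q_B\setminus\{v_j\}_j$, the minimal injective coresolution of $P_x^B$ in $\mod B$ has all its terms supported on $Q_B$; using the identifications above, this coresolution remains a minimal injective coresolution in $\mod A$, so $\idim_A P_x^A=\idim_B P_x^B$ and $\domdim_A P_x^A=\domdim_B P_x^B$.  If $x\in Q_C$ with $\pi_C(x)\notin\{v_j\}_j$, the same argument applies with $C$ in place of $B$: the observation in Paragraph 2 ensures every intermediate term is $I_{y'}^C$ with $y'\in Q_C\setminus\{v_j\}_j$, hence remains projective-injective in $\mod A$, and the final term $I_{\pi_C(x)}^C$ agrees with $I_{\pi_C(x)}^A$.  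For the remaining case $\pi_C(x)=v_i$, the minimal injective coresolution in $\mod C$ has the form $0\to P_x^C\to I^C_0\to\cdots\to I^C_{d-1}\to S_{v_i}\to 0$ with $d=\idim_C P_x^C=\pdim_C I_{v_i}^C$; splicing this at $S_{v_i}\cong P_{v_i}^B$ with the minimal injective coresolution of $P_{v_i}^B$ in $\mod B$ (of length $d'=\idim_B P_{v_i}^B$, intermediate terms projective-injective in $B$ and thus in $A$) produces a minimal injective coresolution of $P_x^A$ in $\mod A$ of length $d+d'$, all of whose intermediate terms are projective-injective, yielding $\idim_A P_x^A=\domdim_A P_x^A=\pdim_C I_{v_i}^C+\idim_B P_{v_i}^B$.

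With the formulas of (2) in hand, (1) follows because $\idim_A A=\max_x\idim_A P_x^A<\infty$ makes $A$ Iwanaga-Gorenstein, and $\idim_A P_x^A=\domdim_A P_x^A$ holds for every non-injective indecomposable projective; statement (3) is then just the maximum and minimum of these values, noting that the contribution of $x\in Q_C$ with $\pi_C(x)\notin\{v_j\}_j$ to $\max\idim_C P_x^C$ is bounded by $\idim C$ while the contribution from $\pi_C(x)=v_j$ is exactly captured by the terms $\pdim I^C_{v_j}+\idim P^B_{v_j}$, so $\idim C$ itself always appears in $\mathcal{D}$.  The main obstacle is the splicing step in the third case: establishing that the joined sequence is actually minimal in $\mod A$ requires verifying that the connecting map $I^C_{d-1}\twoheadrightarrow S_{v_i}\hookrightarrow I_{v_i}^B$ has no invertible direct summand, which rests crucially on the simultaneous identifications $I_{v_i}^C\cong S_{v_i}\cong P_{v_i}^B$ and on $I_{v_i}^B=\soc^{-1}(S_{v_i})$ being the injective envelope in $\mod A$ as well.
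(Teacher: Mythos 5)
Your proposal is correct and follows essentially the same approach as the paper's proof, just in dual form: the paper lifts and splices the minimal \emph{projective resolutions of the indecomposable injectives} (splicing the resolution of $I_x^B$ with that of $I_{v_i}^C$ at $\Omega_B^{p_x}(I_x^B)\cong P_{v_i}^B=S_{v_i}=I_{v_i}^C$), while you lift and splice the minimal \emph{injective coresolutions of the indecomposable projectives} at the same module $S_{v_i}=I_{v_i}^C=P_{v_i}^B$. The key identifications of projectives and injectives across the gluing, the role of $S_{v_i}$ as the splice point, and the conclusion that all intermediate terms remain projective-injective are exactly the content of the paper's argument.
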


\begin{proof}
For simplicity, we omit the superscript $A$ for $A$-modules.
Let $I_x$ be an indecomposable projective $A$-module corresponding to the vertex $x\in Q$.
Note that if $P_y^\Lambda \in\inj \Lambda$ (resp. $I_y^\Lambda\in \proj\Lambda$) for any $\Lambda\in\{B,C\}$, then $P_y\in\inj A$ (resp. $I_y\in \proj A$).

If $x \in Q_C$, then the minimal projective resolution for $I_x^C$ lifts to that of $I_x$ in $\mod A$ naturally (i.e. we can just remove the superscript $C$ everywhere).

Suppose that $x \in Q_B$ and let $p_x:= \pdim I_x^B$.
Since $B$ is dominant Auslander-Gorenstein, we have $\Omega_B^{p_x}(I_x^B)\simeq P_y^B$ for some $y\in Q_B$.  If $y\notin \{v_i\}_i$, then the full minimal projective resolution of $I_x^B$ in $\mod B$ lifts to that of $I_x$ in $\mod A$.
Otherwise, splicing with the (lift of the) minimal projective resolution of $I_y^C$ with the that of $I_x^B$ yields the full minimal projective resolution of $I_x$.
Since $B$ and $C$ are domiant Auslander-Gorenstein, all terms in these projective resolutions are projective-injective except the last one.
The claimed formulae are now clear from these resolutions.
\end{proof}

Quadratic linear Nakayama algebras are all obtained from iteratively gluing hereditary Nakayama algebras and in this way one obtains a proof of \ref{nakayama dominant} as a special case of \ref{gluing}.

\begin{remark}
Let 
\begin{align}\label{BCA}
\widetilde{B}:=\begin{pmatrix}
B & {}_BM_\Lambda \\ 0 & \Lambda 
\end{pmatrix}, \quad \widetilde{C}:=\begin{pmatrix}
\Lambda & {}_\Lambda N_C \\ 0 & C
\end{pmatrix},\ \ \ A:= \begin{pmatrix}
B & M & 0 \\ 0 & \Lambda & N \\ 0 & 0 & C
\end{pmatrix}.
\end{align}
Proposition \ref{gluing} can be generalised as follows:
Suppose we have two dominant Auslander-Gorenstein algebras $\widetilde{B}$ and $\widetilde{C}$ of the form \eqref{BCA}.
If $\Lambda$ is self-injective and both $M,N$ are sincere as $\Lambda$-modules, then the algebra $A$ given by \eqref{BCA}
is also dominant Auslander-Gorenstein.
\end{remark}

\section*{Acknowledgements} 
This project profited from the use of the GAP-package \cite{QPA}. A part of this work was done when the authors attended the workshop ``Representation Theory of Quivers and Finite Dimensional Algebras'' in Mathematisches Forschungsinstitut Oberwolfach.  AC thanks also  Universit\"{a}t Stuttgart for their hospitality while part of this work was done.



\begin{thebibliography}{ABCD}
\bibitem[AT]{AT} Adachi, T.; Tsukamoto, M.: {\it Tilting modules and dominant dimension with respect to injective modules.} Q. J. Math. 72 (2021), no. 3, 855--884.

\bibitem[ASS]{ASS} Assem, I.; Simson, D.; Skowronski, A.: {\it Elements of the representation theory of associative algebras. Vol. 1: Techniques of representation theory.} London Mathematical Society Student Texts 65. Cambridge University Press, ix, 458 p. (2006).

\bibitem[A]{A} Auslander, M.: {\it Representation Dimension of Artin Algebras.} Queen Mary College Mathematics Notes, 1971.

\bibitem[AB]{AB} Auslander, M.; Bridger, M.: {\it Stable module theory}, Memoirs of the American Mathematical Society, No. 94 American Mathematical Society, Providence, R.I. 1969.

\bibitem[APT]{APT} Auslander, M.; Platzeck, M.; Todorov, G.: {\it Homological theory of idempotent Ideals.} Transactions of the American Mathematical Society, Volume 332, Number 2 , August 1992.

\bibitem[AR]{AR} Auslander, M.; Reiten, I.: {\it k-Gorenstein algebras and syzygy modules.} Journal of Pure and Applied Algebra
Volume 92, Issue 1, 18 February 1994, Pages 1-27.

\bibitem[ARS]{ARS} Auslander, M.; Reiten, I.; Smal{{\o}}, S.: {\it Representation Theory of Artin Algebras} Cambridge Studies in Advanced Mathematics, 36. Cambridge University Press, Cambridge, 1997. xiv+425 pp.

\bibitem[AS]{AS} Auslander, M.; Solberg, {{\O}}.: {\it Gorenstein algebras and algebras with dominant dimension at least 2.} Comm. Algebra 21 (1993), no. 11, 3897-3934.

\bibitem[Bj]{B} Bj\"ork, J.: {\it Analytic D-modules and applications.} Mathematics and its Applications (Dordrecht). 247. Dordrecht: Kluwer Academic Publishers. xiii, 581 p. (1993).

\bibitem[Bo]{Bon} Bongartz, K.: {\it On representation-finite selfinjective algebras, coverings, multiplicative bases ...} https://arxiv.org/abs/2305.17418.

\bibitem[BLR]{BLR} Bretscher, O.; L\"aser, C.; Riedtmann, C.: {\it Self-injective and simply connected algebras.}  Manuscripta Math. 36 (1981/82), no. 3, 253--307. 

\bibitem[CDIM]{CDIM} Chan, A.; Darp\"o, E.; Iyama, O.; Marczinzik, R.: {\it  Periodic trivial extension algebras and fractionally Calabi-Yau algebras.} https://arxiv.org/abs/2012.11927.

\bibitem[CIM1]{CIM} Chan, A.; Iyama, O.; Marczinzik, R. :{\it  Auslander-Gorenstein algebras from Serre-formal algebras via replication.} Adv. Math. 345 (2019), 222--262.

\bibitem[CIM2]{CIM2} Chan, A.; Iyama, O.; Marczinzik, R. :{\it  Auslander-regular algebras and Koszul duality}, in preparation. 

\bibitem[CM]{CM} Chan, A.; Marczinzik, R.: {\it On representation-finite gendo-symmetric biserial algebras.}  Algebr. Represent. Theory 22 (2019), no. 1, 141--176.

\bibitem[CK]{CheKoe} Chen, H.; Koenig, S.: {\it Ortho-symmetric modules, Gorenstein algebras and derived equivalences.} International Mathematics Research Notices, Volume 2016, Issue 22, 1 November 2016, Pages 6979-7037.

\bibitem[Ch]{Che} Chen, X.: {\it Gorenstein Homological Algebra of Artin
Algebras.} https://arxiv.org/abs/1712.04587.

\bibitem[Cl]{C} Clark, J. : {\it Auslander-Gorenstein Rings for Beginners.} International Symposium on Ring Theory , 2001, 95-115.

\bibitem[CrM]{CrM} Cruz, T.; Marczinzik, R.: {\it On properly stratified Gorenstein algebras. } J. Pure Appl. Algebra 225, No. 12, 15 p. (2021).

\bibitem[DITW]{DITW} Dao, H.; Iyama, O.; Takahashi, R.; Wemyss, M.: {\it Gorenstein modifications and ${\mathbb Q}$-Gorenstein rings.} J. Algebraic Geom. 29 (2020), no. 4, 729--751.

\bibitem[DI]{DI} Darp\"o, E.; Iyama, O.: {\it $d$-representation-finite self-injective algebras.} Adv. Math. 362 (2020), 106932, 50 pp.

\bibitem[DK]{DK} Darp\"o, E.; Kringeland, T.: {\it Classification of the d-representation-finite trivial extensions of quiver algebras.} https://arxiv.org/abs/2103.15380.

\bibitem[DJL]{DJL} Dyckerhoff, T.; Jasso, G.; Lekili, Y.: {\it The symplectic geometry of higher Auslander algebras: symmetric products of disks.} Forum Math. Sigma 9 (2021), Paper No. e10, 49 pp.

\bibitem[DJW]{DJW} Dyckerhoff, T.; Jasso, G.; Walde, T.: {\it Simplicial structures in higher Auslander-Reiten theory.} Adv. Math. 355 (2019), 106762, 73 pp.
 
\bibitem[ERZ]{ERZ} Eilenberg, S.; Rosenberg, A.;Zelinsky, D.: {\it On the dimension of modules and algebras. VIII. Dimension of tensor products.} Nagoya Math. J. 12 (1957), 71-93.

\bibitem[E]{E} Enomoto, H.: {\it Classifications of exact structures and Cohen-Macaulay-finite algebras.} Advances in Mathematics Volume 335, 2018, Pages 838-877.

\bibitem[EH]{EH} Erdmann, K.; Holm, T.: {\it Maximal n-orthogonal modules for selfinjective algebras.} Proc. Amer. Math. Soc. 136, 2008, 3069-3078.

\bibitem[EHIS]{EHIS} Erdmann, K.; Holm, T.; Iyama, O; Schr{\"{o}}er, J.: {\it Radical embeddings and representation dimension} Adv. Math. 185 (2004), no. 1, 159--177.

\bibitem[FK]{FanKoe} Fang, M.; Koenig, S.: {\it Gendo-symmetric algebras, canonical comultiplication, bar cocomplex and dominant dimension.}  Trans. Amer. Math. Soc. 368 (2016), no. 7, 5037-5055.

\bibitem[FZ]{FZ} Fomin, S.; Zelevinsky, A. {\it Cluster algebras. I. Foundations.} J. Amer. Math. Soc. 15 (2002), no. 2, 497--529.

\bibitem[FGR]{FGR} Fossum, R.; Griffith, P.; Reiten, I.: {\it Trivial extensions of abelian categories. Homological algebra of trivial extensions of abelian categories with applications to ring theory.} Lecture Notes in Mathematics, Vol. 456. Springer Verlag, Berlin-New York, 1975

\bibitem[GI]{GI} Grant, J.; Iyama, O.: {\it Higher preprojective algebras, Koszul algebras, and superpotentials.} Compos. Math. 156 (2020), no. 12, 2588--2627. 

\bibitem[Grev]{Grev} Grevstad, J. F.: {\it Higher Auslander--Solberg correspondence for exact categories}, arXiv:2207.05547

\bibitem[Gree]{G} Green, E.: {\it Introduction to Koszul algebras.}   Representation theory and algebraic geometry (Waltham, MA, 1995), 45--62, London Math. Soc. Lecture Note Ser., 238, Cambridge Univ. Press, Cambridge, 1997.


\bibitem[Han]{H} Hanihara, N.  {\it Auslander correspondence for triangulated categories.} Algebra Number Theory 14 (2020), no. 8, 2037--2058.

\bibitem[Hap]{Hap} Happel, D.: {\it Triangulated categories in the representation theory of finite dimensional algebras.} London Mathematical Society Lecture Note Series, 119. Cambridge etc.: Cambridge University Press. IX, 208 p.;(1988).

\bibitem[HS]{HS} Haugland J.; Sand\o y M. H.: {\it Higher Koszul duality and connections with n-hereditary algebras}, arXiv:2101.12743

\bibitem[HKV]{HKV} Henrard, R.; Kvamme, S.; van Roosmalen, A.: {\it Auslander's formula and correspondence for exact categories.} Adv. Math. 401 (2022), Paper No. 108296, 65 pp.

\bibitem[HI1]{HI1} Herschend, M.; Iyama, O.: {\it $n$-representation-finite algebras and twisted fractionally Calabi-Yau algebras.} Bull. Lond. Math. Soc. 43 (2011), no. 3, 449--466.

\bibitem[HI2]{HI2} Herschend, M.; Iyama, O.: {\it Selfinjective quivers with potential and 2-representation-finite algebras.} Compos. Math. 147 (2011), no. 6, 1885--1920.


\bibitem[HZ1]{HZ1} Huang, Z.; Zhang, X.: {\it Higher Auslander algebras admitting trivial maximal orthogonal subcategories.} J. Algebra 330 (2011), 375--387. 

\bibitem[HZ2]{HZ2} Huang, Z.; Zhang, X.: {\it The existence of maximal n-orthogonal subcategories.} J. Algebra 321 (2009), no. 10, 2829--2842. 

\bibitem[HZ3]{HZ3} Huang, Z.; Zhang, X.: {\it Trivial maximal 1-orthogonal subcategories for Auslander 1-Gorenstein algebras.} J. Aust. Math. Soc. 94 (2013), no. 1, 133--144. 


\bibitem[I1]{I2} Iyama, O.: {\it Symmetry and duality on $n$-Gorenstein rings.}, Journal of Algebra, 269 (2003), no. 2, 528-535. 

\bibitem[I2]{I3} Iyama, O.; \emph{The relationship between homological properties and representation theoretic realization of artin algebras}, Trans. Amer. Math. Soc. 357 (2005), no. 2, 709-734.

\bibitem[I3]{I4} Iyama, O.; \emph{$\tau$-categories II: Nakayama pairs and Rejective subcategories.}, Algebr. Represent. Theory 8 (2005), no. 4, 449--477.

\bibitem[I4]{I5} Iyama, O.: {\it Higher-dimensional Auslander-Reiten theory on maximal orthogonal subcategories.}  Adv. Math. 210 (2007), no. 1, 22-50.

\bibitem[I5]{I6} O. Iyama, Auslander correspondence, Adv. Math. 210 (2007), no. 1, 51--82.

\bibitem[I6]{I7} Iyama, O.: {\it Cluster tilting for higher Auslander algebras.}  Adv. Math. 226 (2011), no. 1, 1--61. 

\bibitem[IM]{IM} Iyama, O.; Marczinzik, R.: {\it Distributive lattices and Auslander regular algebras.} Adv. Math. 398 (2022).

\bibitem[IO1]{IO1} Iyama, O.; Oppermann, S.: {\it $n$-representation-finite algebras and $n$-APR tilting.} Trans. Amer. Math. Soc. 363 (2011), no. 12, 6575--6614.

\bibitem[IO2]{IO2} Iyama, O.; Oppermann, S.: {\it Stable categories of higher preprojective algebras.} Adv. Math. 244 (2013), 23--68.

\bibitem[IS]{IyaSol} Iyama, O.; Solberg, {{\O}}.: {\it Auslander-Gorenstein algebras and precluster tilting.} Advances in Mathematics
Volume 326, 21 February 2018, Pages 200-240.

\bibitem[JKM]{JKM} Jasso, G.; Keller, B.; Muro, F.: {\it The Triangulated Auslander--Iyama Correspondence}, arXiv:2208.14413

\bibitem[JK]{JK} Jasso, G.; K\"ulshammer, J.: {\it Higher Nakayama algebras I: Construction. With an appendix by K\"ulshammer and Chrysostomos Psaroudakis and an appendix by Sondre Kvamme.} Adv. Math. 351 (2019), 1139--1200.

\bibitem[KR]{KR} Keller, B.; Reiten, I.: {\it Cluster-tilted algebras are Gorenstein and stably Calabi-Yau.} Advances in Mathematics 211 (2007) 123-151.

\bibitem[KY1]{KerYam} Kerner, O.; Yamagata, K.: {\it Morita algebras.}Journal of Algebra, Volume 382, pages 185-202, 2013.


\bibitem[KSX]{KSX} Koenig, S.; Slungard, I.; Xi, C.: {\it Double centralizer properties, dominant dimension, and tilting modules. } J. Algebra 240, No. 1, 393-412 (2001).

\bibitem[KMM]{KMM} Ko, H.; Mazorchuk, V.; Mrden, R.: {\it Some homological properties of category O. VI. } Doc. Math. 26, 1237-1269 (2021).

\bibitem[K]{K} Krause, H.: {\it Homological theory of representations.} Cambridge Studies in Advanced Mathematics, 195. Cambridge University Press, Cambridge, 2022.

\bibitem[LMZ]{LMZ} Li, S.; Marczinzik, R.; Zhang, S.: {\it Gorenstein projective dimensions of modules over minimal Auslander-Gorenstein algebras.} Algebra Colloq. 28 (2021), no. 2, 337-350.

\bibitem[LZ]{LZ} Li, S.; Zhang, S.: {\it Algebras with finite relative dominant dimension and almost $n$-precluster tilting modules.} J. Pure Appl. Algebra 225 (2021), no. 2, Paper No. 106498, 19 pp. 


\bibitem[MS]{MS} Ma, B.; Sauter, J.: {\it On faithfully balanced modules, F-cotilting and F-Auslander algebras.} J. Algebra 556 (2020), 1115--1164. 

\bibitem[MMZ]{MMZ} Madsen, D.; Marczinzik, R.; Zaimi, G.: {\it On the classification of higher Auslander algebras for Nakayama algebras.} J. Algebra 556 (2020), 776--805.

\bibitem[Ma]{Mar} Marczinzik, R.: {\it Gendo-symmetric algebras, dominant dimensions and Gorenstein homological algebra.} https://arxiv.org/abs/1608.04212

\bibitem[MRS]{MRS} Marczinzik, R.; Rubey, M.;Stump, C.:{\it A combinatorial classification of 2-regular simple modules for Nakayama algebras.}  J. Pure Appl. Algebra 225 (2021), no. 3.

\bibitem[MVa]{MV} Marczinzik, R.; Vaso, L.: {\it Existence of a 2-cluster tilting module does not imply finite complexity.} Journal of Algebra
Volume 598, 15 May 2022, Pages 385-391.

\bibitem[MVi]{MarVil} Martinez Villa, R.: {\it Modules of dominant and codominant dimension.} Communications in algebra, 20(12), 3515-3540, 1992.



\bibitem[Mc]{Mc} McMahon, J.: {\it Fabric idempotents and higher Auslander-Reiten theory.} J. Pure Appl. Algebra 224 (2020), no. 8, 106343, 19 pp.

\bibitem[Mi]{M} Mizuno, Y.: {\it A Gabriel-type theorem for cluster tilting.} Proc. Lond. Math. Soc. (3) 108 (2014), no. 4, 836--868. 

\bibitem[Mu]{Mue} Mueller, B.: {\it The classification of algebras by dominant dimension.} Canadian Journal of Mathematics, Volume 20, 398-409, 1968.


\bibitem[NRTZ]{NRTZ} Nguyen, V. C.; Reiten, I.; Todorov, G.; Zhu, S.: {\it Dominant dimension and tilting modules.} Math. Z. 292 (2019), 947--973.

\bibitem[P]{P} Pasquali, A.: {\it Self-injective Jacobian algebras from Postnikov diagrams.} Algebr. Represent. Theory, 23(3):1197--1235, 2020.


\bibitem[PS]{PS} Pressland, M.; Sauter, J.: {\it Special tilting modules for algebras with positive dominant dimension.} Glasg. Math. J. 64 (2022), no. 1, 79--105.

\bibitem[QPA]{QPA} The QPA-team, QPA - Quivers, path algebras and representations - a GAP package, Version 1.25; 2016, \url{https://folk.ntnu.no/oyvinso/QPA/}.

\bibitem[Rie]{Rie} Riedtmann, C.: {\it Algebren, Darstellungsk\"ocher, \"Uberlagerungen und zur\"uck.}
Comment. Math. Helv. 55(1980), 199-224.

\bibitem[Rin]{R} Ringel, C. M.: {\it Linear Nakayama algebras which are higher Auslander algebras.} Comm. Algebra 50 (2022), no. 11, 4842--4881.

\bibitem[ST]{ST} Sand\o y M. H.; Thibault, L.: {\it Classification results for $n$-hereditary monomial algebras}, arXiv:2101.12746

\bibitem[S]{S} Sen, E.: {\it Higher Auslander Algebras arising from Dynkin Quivers and $n$-Representation Finite Algebras}, arXiv:2307.13262.

\bibitem[SY]{SkoYam} Skowronski, A.; Yamagata, K.: {\it Frobenius Algebras I: Basic Representation Theory.} EMS Textbooks in Mathematics, 2011.

\bibitem[T]{Ta} Tachikawa, H.: {\it Quasi-Frobenius Rings and Generalizations: QF-3 and QF-1 Rings (Lecture Notes in Mathematics 351) } Springer; 1973.


\bibitem[Van]{V} van den Bergh, M.: {\it Non-commutative crepant resolutions.} The legacy of Niels Henrik Abel, 749--770, Springer, Berlin, 2004.

\bibitem[Vas]{Va} Vaso, L.: {\it $n$-cluster tilting subcategories of representation-directed algebras}, J. Pure Appl. Algebra 223, No. 5, 2101-2122 (2019).

\bibitem[VO]{VO} Van Oystaeyen, F.: {\it Algebraic geometry for associative algebras.} Pure and applied Mathematics, Marcel Dekker Inc., 2000.

\bibitem[Wi]{W} Williams, N.: {\it New interpretations of the higher Stasheff-Tamari orders.} Adv. Math. 407 (2022), Paper No. 108552, 49 pp. 

\bibitem[Wu]{Wu} Wu, Y.: {\it Relative cluster categories and Higgs categories.} arXiv:2109.03707

\bibitem[Ya]{Yam} Yamagata, K.: {\it Frobenius Algebras} in
\noindent {Hazewinkel, M. (editor): Handbook of Algebra, North-Holland, Amsterdam, Volume I, 841-887, 1996.}

\bibitem[Ye]{Ye}  Yekutieli, A.: {\it Derived categories.} Cambridge Studies in Advanced Mathematics, 183. Cambridge University
Press, Cambridge, 2020.

\bibitem[Z]{Z} Zito, S.: {\it 1-Auslander-Gorenstein algebras which are tilted.} J. Algebra 555 (2020), 265--274.

\end{thebibliography}
\end{document}